\documentclass[a4paper,11pt, leqno]{article}

\usepackage{latexsym,enumerate,graphicx}
\usepackage{amsmath,amsthm,amsopn,amstext,amscd,amsfonts,amssymb}
\usepackage[T1]{fontenc}
\usepackage{fullpage,yhmath}
\usepackage{hyperref}
\usepackage{eucal}
\usepackage{color}
\usepackage{verbatim}
\usepackage[normalem]{ulem}
\usepackage{frcursive}

\newenvironment{frcseries}{\fontfamily{frc}\selectfont}{}

\numberwithin{equation}{section}

\newtheorem{theorem}{Theorem}

\newtheorem{proposition}{Proposition}
\newtheorem{remark}{Remark}

\newtheorem{definition}{Definition}

\numberwithin{theorem}{section}
\numberwithin{corollary}{section}
\numberwithin{lemma}{section}
\numberwithin{definition}{section}
\numberwithin{proposition}{section}
\numberwithin{remark}{section}

\newcommand{\RR}{\mathbb R^N}
\newcommand{\C}{\mathbb C}
\newcommand{\Ze}{\mathbb Z}
\newcommand{\R}{\mathbb R}
\newcommand{\rn}{\mathbb R^N}

\newcommand{\medint}{-\kern  -,375cm\int}
\newcommand{\dint}{\displaystyle\int}

\newcommand{\di}{\mathop{\mathrm{d}\!}}

\newcommand{\dive}{\mathop{\mathrm{div}}}

\makeatletter
\@namedef{subjclassname@2020}{  \textup{2020} Mathematics Subject Classification}
\makeatother
\date{}

\begin{document}
\title{Kohler-Jobin inequality for $p$-Laplace operator\\ in the Gauss space}

\author{Francesco Chiacchio$^1$, Vincenzo Ferone$^{1,3}$, Anna Mercaldo$^1$, Jing Wang$^2$}
\footnotetext[1]
{{Dipartimento di Matematica e Applicazioni ``Renato Caccioppoli'', Universit\`a degli Studi di Napoli Federico II, Via Cintia, Complesso Universitario Monte S. Angelo, 80143 Napoli, Italy.}}

%
\footnotetext[2]
{{Department of Mathematics, Purdue University,
West Lafayette, Indiana, 47907, USA.}}
\footnotetext[3]
{{Corresponding author: ferone@unina.it}}
\maketitle

\setcounter{tocdepth}{1}

\begin{abstract}
A sharp lower bound for the first Dirichlet eigenvalue of the $p$-laplacian in Gaussian space is derived for sets with prescribed generalized  torsional rigidity. The result provides an extension of the classical spectral inequality due to Kohler-Jobin. The proof is based on a careful analysis of the generalized  torsional rigidity and on a sharp mass comparison result. Furthermore, a Payne-Rayner type inequality is established.

\vskip 0.5cm
\noindent\textit{Keywords:} Symmetrization, Kohler-Jobin inequality, reverse H\"older inequality.

\noindent\textit{MSC 2020: \textup{35P15, 47J10, 35J92.}}
\end{abstract}


\section{Introduction}\label{intr}

Let us consider the first eigenvalue of the $p$-Laplacian in the Gauss space, also known as the Gaussian $p$-Laplacian. 
More precisely, for $p>1$, we study the following eigenvalue problem
\begin{equation} \label{eq.0}
\left\{
\begin{array}
[c]{lll}%
 -\dive(\phi_N (x)|Du|^{p-2}Du)=\lambda \phi_N (x)|u|^{p-2}u & & \text{in }%
\Omega,\\
\\
u=0 & & \text{on }\partial\Omega,
\end{array}
\right. %
\end{equation}
where
\begin{equation}\label{gauss}
\phi_N (x) =\frac 1{(2\pi)^{\frac N2}}\exp
\left(-\frac{|x|^2}2\right)
\end{equation}
and, here and throughout the paper,
 $\Omega $ is a possibly unbounded open set in 
 $ \mathbb{R}^N$
with $\gamma_N (\Omega)<1$, where 
$\gamma_N (A)$  
stands for the Gaussian measure of $A\subset\R^N$:
\[
\gamma_N (A) =\int_A\phi_N (x)\,\di x\in[0,1].
\]

%

Denoting by $L^p(\Omega,\phi_N)$ the weighted Lebesgue space and by $W_{0}^{1,p}(\Omega,\phi_N)$ the weighted Sobolev space, it is well known that the first eigenvalue $\lambda_1(\Omega)$ has the variational characterization
\begin{equation}\label{eigen}
\lambda_1(\Omega)=\min_{w\in W_{0}^{1,p} \left( \Omega,\phi _{N} \right)\backslash\{0\}}
\dfrac{\displaystyle\int_\Omega|Dw|^p\, \di \gamma_N}{\displaystyle\int_\Omega |w|^p\, \di \gamma_N}.
\end{equation}
Moreover, there exists a positive eigenfunction $u_1$ associated with $\lambda_1(\Omega)$ that attains the minimum in \eqref{eigen}.

Then we define the $p$-torsional rigidity by
\begin{equation}\label{TT}
T(\Omega)=\max_{w\in W_{0}^{1,p} \left( \Omega ,\phi_{N} \right)\backslash\{0\}}\dfrac{\displaystyle\left(\int_\Omega w\, \di \gamma_N\right)^p}{\displaystyle\int_\Omega|Dw|^p\, \di \gamma_N}.
\end{equation}
It is known that its maximum is attained at $w=v$, where $v$ is the torsion function that solves the following boundary value problem
\begin{equation} \label{ve}
\left\{
\begin{array}
[c]{lll}%
-\dive(\phi_N (x)|Dv|^{p-2}Dv)=\phi_N (x)& & \text{in }%
\Omega,\\
\\
v=0 & & \text{on }\partial\Omega.
\end{array}
\right.
\end{equation}
and it follows that 
\begin{equation}\label{TQ}
T(\Omega)=\left(\int_\Omega v(x)\, \di \gamma_N\right)^{p-1}.
\end{equation}

The extremal sets for the principal frequency and the torsional rigidity have been extensively studied in the literature. Particularly when $\Omega$ is bounded and $\phi_N (x)\equiv 1$, the differential operator in \eqref{eq.0}, \eqref{ve} reduces to the $p$-Laplacian. When $p=2$, these quantities are closely related to classical isoperimetric-type inequalities.
Among sets of given measure, the ball minimizes $\lambda_1(\Omega)$, as stated in the Lord Rayleigh conjecture and proven by Faber and Krahn (\cite{Fa,Kr}), and maximizes $T(\Omega)$, as stated in the Saint-Venant conjecture and proven by P\'olya (\cite{Po}). 
Moreover, in \cite{PS} P\'olya and Szeg\H o stated the stronger conjecture that among sets with fixed torsional rigidity, the ball minimizes the principal frequency. This conjecture was firstly proved by Kohler-Jobin in \cite{KMCb,KMZa} using a new rearrangement technique known as \emph{transplantation \`a integrales de Dirichlet \'egales}. 
For $p>1$, a nonlinear version of Kohler-Jobin inequality was established in \cite{Br} using a similar approach.

In this paper, we consider these isoperimetric problems for the Gaussian $p$-Laplacian. The motivation is not merely a passage to a weighted setting, but rather reflects a shift toward an infinite-dimensional geometry. It is well knwon that the Gaussian measure plays the role of a canonical reference measure in infinite-dimensional settings, and can be viewed as the limit of normalized Lebesgue measure on high-dimensional spheres under finite-dimensional projections. Half-spaces then appear as the limiting counterparts of the Euclidean extremal sets, namely the Euclidean balls.

As regards the Gaussian $p$-Laplacian, the case $p=2$ was studied in \cite{HL} (see also
\cite{CGNT}).
In \cite{HL}, it was shown that for any domain $\Omega\subset\R^N$, if $H$ is a half-space such that $T(H)=T(\Omega)$, then
$$
\lambda_1(\Omega)\ge\lambda_1(H).
$$
However, the proof in \cite{HL} relies heavily on specific properties of certain special functions rather than general rearrangement arguments. In particular, for general $p>1$ where explicit solutions are no longer available, this method does not seem to extend directly.
We therefore adopt a different approach that is based on a variational formulation of a \emph{generalized torsional rigidity}, defined as follows. For $\alpha \in \R$, let
\begin{equation}
    \label{torgen_max_intro}
Q_p(\alpha, \Omega)=
\sup_{w\in 
W_{0}^{1,p} \left( \Omega ,\phi_{N} \right)}
\left\{-\int_\Omega|Dw(x)|^p\, \di \gamma_N+\alpha\int_\Omega |w(x)|^p\, \di \gamma_N+p\int_\Omega w(x)\, \di \gamma_N\right\}.
\end{equation}
This notion was first introduced in \cite{Ba} in the case  $p=2$ and $\phi_N (x)\equiv 1$.
For any \(\alpha \in (-\infty, \lambda_1(\Omega))\),
the maximum in \eqref{torgen_max_intro} is attained at the generalized torsion function \({v}\), which solves the problem
\begin{equation}
    \label{ve_intro}
\left\{
\begin{array}
[c]{lll}%
-\dive(\phi_N (x)|Dv|^{p-2}Dv)=\alpha v^{p-1}\phi_N (x)+\phi_N (x)& & \text{in }%
\Omega,\\
\\
v=0 & & \text{on }\partial\Omega.
\end{array}
\right. 
\end{equation}
In particular, when $\alpha=0$, the generalized torsional rigidity reduces to the $p$-torsional rigidity, that is,
\begin{equation}\label{TQa}
T(\Omega)=\left(\dfrac{Q_p(0,\Omega)}{p-1}\right)^{p-1}.
\end{equation}

In this paper we will prove that, for any \(\alpha \in (-\infty, \lambda_1(\Omega))\) and for any  set $\Omega\subset\RR$ with $\gamma_N (\Omega)<1$, it holds
\begin{equation}\label{tor-alpha}
\lambda_1(\Omega) \geq \lambda_1(H_{\alpha})\quad\text{where}\ \ H_{\alpha}\ \ \text{is a half-space s.t.} \ Q_p(\alpha, H_{\alpha})=Q_p(\alpha, \Omega).
\end{equation}
Clearly, when \(\alpha = 0\), the above statement implies the inequality proven in \cite{HL} in the case $p=2$, so it states a more general result also in the linear case. Among other things, our proof is based on the fact that the mapping $\alpha \mapsto H_{\alpha}$ in \eqref{tor-alpha} is monotone with respect to the inclusion and the full statement \eqref{tor-alpha} follows taking the limit as $\alpha\rightarrow\lambda_1(\Omega)$. A key step in proving the quoted monotonicity is a comparison result in terms of a mass comparison inequality between the solution $v$ to problem \eqref{ve_intro} and the function which solves the same problem on the half-space $H_{\alpha}$. Let us emphasize that the properties of the mapping $\alpha \mapsto H_{\alpha}$ are obtained after a careful analysis of the dependence of $Q_p(\alpha, \Omega)$ on $\alpha$ and on $\Omega$. In particular, it allows us to prove a derivation formula with respect to $\alpha$ which turns out to be very useful in the proof of the main result. Let us finally mention that the needed properties of $Q_p(\alpha, \Omega)$ in the special case where $\Omega$ is a half-space are proven without using the explicit form of the generalized torsion
function.

Let us observe that, when the Gaussian weight is absent, a similar approach has been adopted in \cite{KMCa,KMZa,KMZb}, where, using different techniques, the counterpart of \eqref{tor-alpha} is proven. Unfortunately, since \eqref{tor-alpha} is obtained via a limit procedure, it seems that the method does not give a characterization of the equality case.

Let us finally remark that the method used to prove the comparison result for the function $v$ can be easily adapted to prove a comparison result for the first eigenfuntion for problem \eqref{eq.0}. This observation allows us to prove a Payne-Rayner type inequality for the Gaussian
$p$-laplacian extending results obtained, for example, in \cite{PR1}, \cite{PR2}, \cite{K}, \cite{C2}, \cite{AFT}, \cite{BCF}.

The paper is structured as follows. 
In Section \ref{s1}, we introduce some notation and we collect some preliminary results about the first eigenvalue and the torsional rigidity defined in \eqref{eigen} and \eqref{TT}, respectively. In Section \ref{s2} we prove some properties of the generalized torsional rigidity defined in \eqref{torgen_max_intro}, both in a general domain and in a 
half-space.
In Section \ref{s3} we prove the comparison result and the Payne-Rayner type inequality mentioned above, while in Section \ref{s4} we prove the main result. Finally, Section \ref{s5} is dedicated to show that, when $p=2$, it is possible to solve explicitly problem \eqref{ve_intro} on a half-space, and Section \ref{s6} contains some proofs of auxiliary results stated in the previous sections.

\section{Preliminary results}\label{s1}
This section is devoted to recalling basic properties of weighted rearrangements with respect to the Gaussian measure, as well as classical results on the first eigenvalue and the 
$p$-torsional rigidity in the Gauss space.

\subsection{Symmetrization with respect to Gauss measure}

Let $u$ be a measurable function defined on a subset $\Omega \subseteq \mathbb{R}^n$.
The distribution function of $u$, denoted by $\mu$, is defined as the Gaussian 
measure of the superlevel sets of $|u|$, i.e., the map from $[0, +\infty)$ into 
$[0, \gamma_N(\Omega)]$ given by
$$
\mu
    (t)=\gamma_N \left (\left \{x\in \Omega :~ ~|u(x)|>t\right \}\right ).
$$
The function $\mu$ is non-increasing and right-continuous. 
The decreasing
rearrangement of $u$ is defined as
\begin{equation}\label{riord} 
u^*(s)=\inf \{ t \ge 0 : \mu(t) \le s\}, \qquad 0 < s \le \gamma_N (\Omega). 
\end{equation}
It follows from the definition that $u^*(s)$ is also non-increasing and right-continuous.

The rearrangement of $u$ with respect to the Gaussian measure is the function
$u^\sharp$ whose superlevel sets $\{u^\sharp > t\}$ are half-spaces having the
same Gaussian measure as the corresponding superlevel sets of $|u|$. It is clear that one can choose the half-spaces in various ways. In order to fix the notation, here and in the following, we consider half-spaces such that the scalar product of the unit vector $(1,0,\dots,0)$ with the external normal to their boundary is positive. More precisely, for any set $E\subset\R^N$ we denote by $E^\sharp$ the
half-space in the form
$$
E^\sharp=\{x=(x_1,x_2,\dots,x_N)\in\R^N\,:\,\,x_1<\tau\},
$$
having the same Gauss measure of $E$, that is, $\gamma_N(E^\sharp)=\gamma_N(E)$.
A straightforward calculation gives
\begin{equation*}
\tau=k^{-1}(\gamma_N(\Omega)),
\end{equation*}
where $k(\sigma)$ is the function 
\begin{equation}
k(\sigma)=\gamma_N\left (\{x\in \rn:~~x_1<\sigma\}\right )=\frac {1}{\sqrt
{2\pi}}
\int_{-\infty}^{\sigma}
\exp\left({-\frac{t^2}{2}}\right) \di t \label{phi}
\end{equation}
for all $\sigma \in \R$.
Then $u^\sharp $  is a map from
$\Omega^\sharp$ in
$[0,+\infty)$ defined by
$$
u^\sharp (x)=u^\sharp (x_1) = u^*\left (k (x_1)\right ).
$$
Weighted $L^p$-norm  is invariant with respect to such a rearrangement, that is:
\begin{equation}\label{inv}
 ||u||_{L^{p}(\Omega,\phi_N)} =
||u^\sharp||_{L^p(\Omega^{\sharp},\phi_N)}.
\end{equation}
for any $1\le p\le \infty$.

\noindent Moreover, a P\'olya-Sz\'eg\H{o} principle also holds true with respect to the Gaussian measure. 
 \cite{E,T, T2}.

\begin{theorem} \label{polya_szego}
 Let  $p>1$ and 
 $u\in W^{1,p}_0(\Omega,\phi_N )$. Then $u^\sharp \in W^{1,p}_0(\Omega^\sharp,\phi_N)$ and 
\begin{equation*}
\int_{\Omega}|D u|^{p}\di\gamma_N \geq \int_{\Omega^\sharp}|D u^{\sharp}|^{p}\di\gamma_N.  \label{Polya_Szego_Eh2}
\end{equation*}
\end{theorem}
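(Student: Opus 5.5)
The plan is the classical route to P\'olya--Szeg\H{o} inequalities: coarea formula, isoperimetric inequality, and Jensen/H\"older on the level sets. Here the isoperimetric inequality used is the Gaussian one, for which half-spaces are extremal (Borell, Sudakov--Tsirelson).

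First, reductions. Since $|D|u||=|Du|$ a.e., we may assume $u\ge0$. By density of $C_c^\infty(\Omega)$ in $W^{1,p}_0(\Omega,\phi_N)$, it suffices to treat smooth compactly supported $u$. For general $u$ we then use two facts: the map $u\mapsto u^\sharp$ is a contraction in $L^p(\phi_N)$, a consequence of the one-dimensional nonexpansivity of $u\mapsto u^*$; and the functional $\int|Dw|^p\,\di\gamma_N$ is weakly lower semicontinuous. These pass the inequality to the limit and also yield $u^\sharp\in W^{1,p}_0(\Omega^\sharp,\phi_N)$, the zero boundary condition coming from $u^*(\gamma_N(\Omega))=0$. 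For smooth $u$, Sard's theorem gives a.e.\ regular level, and $\mu$ is then absolutely continuous away from a null set of critical values.

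Second, the level-set computation. Write $P_\gamma(E)=\int_{\partial E}\phi_N\,\di\mathcal H^{N-1}$ for the Gaussian perimeter. By the coarea formula,
\begin{equation*}
-\mu'(t)\ge\int_{\{u=t\}}\frac{\phi_N}{|Du|}\,\di\mathcal H^{N-1}
\qquad\text{and}\qquad
\int_{\{u>t\}}|Du|^p\,\di\gamma_N=\int_t^\infty\int_{\{u=\tau\}}|Du|^{p-1}\phi_N\,\di\mathcal H^{N-1}\,\di\tau .
\end{equation*}
Differentiating the second identity and applying H\"older's inequality on $\{u=t\}$ gives
\begin{equation*}
-\frac{d}{dt}\int_{\{u>t\}}|Du|^p\,\di\gamma_N\ge \frac{P_\gamma(\{u>t\})^p}{(-\mu'(t))^{p-1}}.
\end{equation*}
The Gaussian isoperimetric inequality states that $P_\gamma(E)\ge I(\gamma_N(E))$, where $I(s)=k'(k^{-1}(s))$ is the perimeter of the half-space of measure $s$.

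Third, comparison with $u^\sharp$. For $u^\sharp$ the level sets are half-spaces and $|Du^\sharp|$ is constant on each of them, so every inequality above is an equality. Moreover $\mu_{u^\sharp}=\mu_u$. Integrating in $t$ therefore yields
\begin{equation*}
\int|Du|^p\,\di\gamma_N\ge\int_0^\infty \frac{I(\mu(t))^p}{(-\mu'(t))^{p-1}}\,\di t=\int|Du^\sharp|^p\,\di\gamma_N .
\end{equation*}

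The main obstacle is the regularity bookkeeping in this integration. Integrating the derivative inequality over $t$ is only valid if the set $\{Du=0\}\cap\{0<u<\sup u\}$ is handled, and this is exactly where $\mu$ may have a singular part. For smooth $u$ the critical values form a null set by Sard's theorem, and the right-hand side only involves $\mu'$ where it exists. A jump in $\mu$ corresponds to a flat part of $u^*$, which contributes no gradient to $u^\sharp$, so the inequality survives. An alternative that avoids this bookkeeping is to cite the Gaussian isoperimetric inequality together with the general rearrangement framework of Ehrhard and Talenti.
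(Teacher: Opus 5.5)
Your argument is correct. It is the classical coarea, Gaussian isoperimetric and H\"older scheme; the paper gives no proof of its own and simply cites Ehrhard and Talenti, whose proofs follow these lines. The step to make explicit is that $u^\sharp$ is Lipschitz for smooth $u$. This, not the set $\{Du=0\}$ (which coarea already handles on the $u$ side), is what justifies $\int|Du^\sharp|^p\,\di\gamma_N=\int_0^\infty I(\mu)^p(-\mu')^{1-p}\,\di t$ and $u_n^\sharp\in W^{1,p}_0(\Omega^\sharp,\phi_N)$ in your density step. It follows from your own bound $-\mu'(t)\ge I(\mu(t))/\|Du\|_\infty$ a.e., which forces the monotone function $k^{-1}(\mu(t))$ to decrease at rate at least $1/\|Du\|_\infty$.
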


\noindent We recall that a Hardy-Littlewood type inequality holds true 
\begin{equation}
\int_\Omega |f(x)g(x)| \di\gamma_N\le
\int_{\Omega^\sharp} f^\sharp (x) g^\sharp (x)\di\gamma_N=\int_0^{\gamma_N(\Omega ) }
f^*(s)g^*(s)\, \di s\label{Har}
\end{equation}
Finally we recall the following result (cf. \cite{ChongRice}).
\begin{proposition} \label{relazione}
Let $f,g\in L_+^{1}(\Omega,\phi_N )$. Then the following statements are equivalent:
\begin{equation}
 \int_0^t f^*(s)\,\di s\le
\int_0^t g^*(s)\,\di s\,, \quad \forall t\in [0, \gamma_N(\Omega )]
\end{equation}
\begin{equation}
 \int_\Omega F(f)\,\di\gamma_N\le
 \int_\Omega F(g)\,\di\gamma_N\,, 
\end{equation}
for all convex, nonnegative  Lipschitz continuo functions $F$ such that $F(0)=0$.
\end{proposition}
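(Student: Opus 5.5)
The plan is to prove Proposition \ref{relazione} in the one-dimensional setting, working entirely with decreasing rearrangements on $[0,\gamma_N(\Omega)]$. Since $\gamma_N$ is nonatomic, every $f\in L^1_+(\Omega,\phi_N)$ is equimeasurable with $f^*$ on $(0,\gamma_N(\Omega))$ with Lebesgue measure. Hence for any Borel $F\ge 0$
\[
\int_\Omega F(f)\,\di\gamma_N=\int_0^{\gamma_N(\Omega)}F(f^*(s))\,\di s .
\]
So both statements concern only the nonincreasing functions $f^*,g^*$ on the interval $(0,L)$, $L=\gamma_N(\Omega)$. This is the classical Hardy--Littlewood--P\'olya majorization setting.

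\textbf{(ii)$\Rightarrow$(i).} Test with $F(r)=(r-t)_+$, $t\ge0$, which is convex, nonnegative, $1$-Lipschitz, and vanishes at $0$. This gives
\[
\int_0^L (f^*-t)_+\,\di s\le \int_0^L (g^*-t)_+\,\di s \qquad \text{for all } t\ge 0 .
\]
Next use the standard identity, valid for nonincreasing $h=f^*$:
\[
\int_0^\sigma h\,\di s=\min_{t\ge0}\Big( t\sigma+\int_0^L (h-t)_+\,\di s\Big).
\]
The inequality $\le$ holds because $h\le t+(h-t)_+$. The minimum is attained at $t=h^*(\sigma)$, because $\{h>h(\sigma)\}\subset(0,\sigma)$ and $h\ge h(\sigma)$ on $(0,\sigma)$. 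Applying the identity to $f^*$, and the inequality alone to $g^*$ at the same $t$, gives (i).

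\textbf{(i)$\Rightarrow$(ii).} First reduce $F$. A convex, nonnegative, Lipschitz $F$ on $[0,\infty)$ with $F(0)=0$ is nondecreasing, since its right derivative at $0$ is $\ge0$ by nonnegativity. Such an $F$ is a monotone limit of functions of the form
\[
a r+\sum_i c_i (r-t_i)_+, \qquad a,c_i\ge0 .
\]
Concretely, $F(r)=F'(0^+)r+\int_0^\infty (r-t)_+\,\di F''(t)$ with $\di F''$ a nonnegative measure. It therefore suffices to prove (ii) for $F(r)=r$ and for $F(r)=(r-t)_+$, and then conclude by Fubini or monotone convergence. The case $F(r)=r$ is (i) at $t=L$. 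For $(r-t)_+$, set $\sigma=\gamma_N(\{f>t\})=|\{f^*>t\}|$. Then
\[
\int (f^*-t)_+=\int_0^\sigma f^*-t\sigma\le\int_0^\sigma g^*-t\sigma=\int_0^\sigma (g^*-t)\le\int_0^\sigma (g^*-t)_+\le \int_0^L (g^*-t)_+ .
\]

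\textbf{Main obstacle.} The step needing the most care is the representation of a general $F$ as a mixture of angles with nonnegative weights. This requires $F$ nondecreasing, which is where nonnegativity and $F(0)=0$ enter. One must also justify the exchange of integrals when $\int F(g)=\infty$; this is harmless because $F$ is Lipschitz and $g\in L^1$, so both sides are finite. An alternative route for this direction is to approximate $f^*,g^*$ by step functions and apply the discrete Karamata inequality, but the angle-decomposition argument is cleaner.
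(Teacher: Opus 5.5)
The paper gives no proof of this proposition; it only refers to Chong and Rice. Your argument is a self-contained version of the classical Hardy--Littlewood--P\'olya majorization proof via the ``angle'' functions $(r-t)_+$. The direction (i)$\Rightarrow$(ii) is correct as written. Monotonicity of $F$ is exactly where $F\ge 0$ and $F(0)=0$ enter, since (i) is only a weak majorization with no equality at $t=\gamma_N(\Omega)$. The representation $F(r)=F'(0^+)r+\int_{(0,\infty)}(r-t)_+\,\di\mu(t)$ with $\mu=\di F'_+\ge 0$, Tonelli, and the chain of inequalities with $\sigma=|\{f^*>t\}|$ all check out.

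There is a slip in the last step of (ii)$\Rightarrow$(i): the roles of $f^*$ and $g^*$ are reversed. Take $t=f^*(\sigma)$ and apply the identity to $f^*$ and the inequality to $g^*$. You obtain $\int_0^\sigma f^*\le t\sigma+\int_0^L(g^*-t)_+$ and $\int_0^\sigma g^*\le t\sigma+\int_0^L(g^*-t)_+$. These are two upper bounds by the same quantity, and nothing follows from them.

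Instead, use the attained minimum for $g^*$ and the bare inequality for $f^*$. For $\sigma\in(0,\gamma_N(\Omega))$ choose $t=g^*(\sigma)$, which is finite and nonnegative because $g\in L^1$:
\[
\int_0^\sigma f^*\,\di s\le t\sigma+\int_0^{\gamma_N(\Omega)}(f^*-t)_+\,\di s\le t\sigma+\int_0^{\gamma_N(\Omega)}(g^*-t)_+\,\di s=\int_0^\sigma g^*\,\di s .
\]
Put differently, (ii) tested on the angles makes $t\mapsto t\sigma+\int(f^*-t)_+$ lie below the corresponding function for $g^*$. So its minimum over $t\ge 0$ is smaller, and your identity identifies these minima. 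That identity was proved for an arbitrary nonincreasing $h$, so it applies to $g^*$ as well. The endpoint $\sigma=0$ is trivial, and $\sigma=\gamma_N(\Omega)$ is the case $t=0$.

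With this swap, and $h^*(\sigma)$ read as $h(\sigma)$, your proof is a complete replacement for the citation.
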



\subsection{First eigenvalue and torsional rigidity}

We recall the definition of weighted Sobolev space.
\begin{definition}
 The weighted Sobolev space $W^{1,p}(\Omega, \phi_N)$ is the set of all
functions
 $\psi \in W^{1,1}_{loc}(\Omega)$ such that $(\psi,|D\psi|) \in L^p(\Omega, \phi_N)\times
L^p(\Omega,\phi_N)$,
 endowed with the norm
 $$
 ||\psi||_{W^{1,p}(\Omega,\phi_N)} =
||\psi||_{L^p(\Omega,\phi_N)}+||D\psi||_{L^p(\Omega,\phi_N)}.
 $$
  The weighted Sobolev space $W_0^{1,p}(\Omega,\phi_N)$
  is the closure of $C_0^\infty(\Omega)$ in $W^{1,p}(\Omega, \phi_N)$.  
  \end{definition}

\noindent For $p>1$ and $\Omega\subset\R^N$, with $\gamma_N (\Omega)<1$, we consider the following eigenvalue problem
\begin{equation*}
\left\{
\begin{array}
[c]{lll}%
 -\dive(\phi_N (x)|Du|^{p-2}Du)=\lambda \phi_N (x)|u|^{p-2}u & & \text{in }%
\Omega,\\
\\
u=0 & & \text{on }\partial\Omega,
\end{array}
\right. %
\end{equation*}
It is well known that the first eigenvalue $\lambda_1(\Omega)$ has the variational characterization
\begin{equation*}
\lambda_1(\Omega)=\min_{w\in W_{0}^{1,p} \left( \Omega,\phi _{N} \right)\backslash\{0\}}
\frac{\|Dw\|^p_{L^p(\Omega,\phi_N)}}{\displaystyle\|w\|^p_{L^p(\Omega,\phi_N)}},
\end{equation*}
and that there exists a positive function $u_1$, which is an eigenfunction corresponding to $\lambda_1(\Omega)$, attaining the minimum in \eqref{eigen}
Note that Theorem \ref{polya_szego}, together with the fact that symmetrization preserves $L^p$ norms (see, e.g. \cite{BCF})  ensures the validity of the following Faber-Krahn inequality.

\begin{theorem}\label{KR} Let
$\Omega $ be an open set in 
 $ \mathbb{R}^N$,
with $\gamma_N (\Omega)<1$. Then
\begin{equation}\label{FK}
\lambda_1(\Omega) \ge \lambda_1(\Omega^\sharp).
\end{equation}    
\end{theorem}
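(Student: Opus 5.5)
The plan is to combine the variational characterization of $\lambda_1(\Omega)$ with Theorem \ref{polya_szego} and the invariance \eqref{inv}. The comparison will be made against the Rayleigh quotient on $\Omega^\sharp$, which is bounded below by $\lambda_1(\Omega^\sharp)$.

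First, let $u_1\in W_0^{1,p}(\Omega,\phi_N)$ be a positive eigenfunction attaining the minimum in \eqref{eigen}. If one prefers not to rely on attainment, take instead an arbitrary $w\in W_0^{1,p}(\Omega,\phi_N)\setminus\{0\}$ and replace it by $|w|$. This is allowed because $|D|w||=|Dw|$ a.e., so the Rayleigh quotient is unchanged.

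Second, form the Gaussian rearrangement $u_1^\sharp$. By Theorem \ref{polya_szego}, $u_1^\sharp\in W_0^{1,p}(\Omega^\sharp,\phi_N)$ and
\[
\int_{\Omega^\sharp}|Du_1^\sharp|^p\,\di\gamma_N\le\int_\Omega|Du_1|^p\,\di\gamma_N .
\]
By \eqref{inv}, $\|u_1^\sharp\|_{L^p(\Omega^\sharp,\phi_N)}=\|u_1\|_{L^p(\Omega,\phi_N)}\neq0$, so $u_1^\sharp$ is an admissible nonzero competitor in the variational problem for $\lambda_1(\Omega^\sharp)$.

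Third, chain the two estimates:
\[
\lambda_1(\Omega^\sharp)\le\frac{\int_{\Omega^\sharp}|Du_1^\sharp|^p\,\di\gamma_N}{\int_{\Omega^\sharp}|u_1^\sharp|^p\,\di\gamma_N}\le\frac{\int_{\Omega}|Du_1|^p\,\di\gamma_N}{\int_{\Omega}|u_1|^p\,\di\gamma_N}=\lambda_1(\Omega).
\]
This gives \eqref{FK}.

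The only delicate point is confirming that $\lambda_1(\Omega^\sharp)$ is well defined by the same variational formula on the unbounded half-space $\Omega^\sharp$. This holds because $\gamma_N(\Omega^\sharp)=\gamma_N(\Omega)<1$, which is exactly the standing assumption under which the minimum in \eqref{eigen} is taken. In any case, the argument only uses $\lambda_1(\Omega^\sharp)$ as an infimum over $W_0^{1,p}(\Omega^\sharp,\phi_N)\setminus\{0\}$, so attainment on $\Omega^\sharp$ is not needed.
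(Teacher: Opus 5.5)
Your proposal is correct and follows the paper's own argument. The paper obtains Theorem \ref{KR} directly from the P\'olya--Szeg\H{o} principle (Theorem \ref{polya_szego}) combined with the invariance \eqref{inv} of weighted $L^p$ norms under Gaussian rearrangement, which is exactly the chain of Rayleigh quotients you write down.
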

Arguing as in the case of the first eigenvalue, Theorem \ref{polya_szego} implies that the following isoperimetric inequality holds for the $p$-torsional rigidity defined in \eqref{TT}.

\begin{theorem}\label{SV}
Let
$\Omega $ be an open set in 
 $ \mathbb{R}^N$,
with $\gamma_N (\Omega)<1$. Then
\begin{equation}\label{SVe}
T(\Omega) \le T(\Omega^\sharp).
\end{equation}    
\end{theorem}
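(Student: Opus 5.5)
The statement to prove is Theorem \ref{SV}, the Saint-Venant inequality $T(\Omega)\le T(\Omega^\sharp)$. I will argue directly from the variational characterization \eqref{TT}, mimicking the proof of the Faber--Krahn inequality in Theorem \ref{KR}.

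Take any admissible $w\in W_0^{1,p}(\Omega,\phi_N)\setminus\{0\}$. First replace $w$ by $|w|$. This does not change the Dirichlet energy, since $|D|w||=|Dw|$ a.e. It can only increase the numerator, since $\left(\int_\Omega w\,\di\gamma_N\right)^p\le\left(\int_\Omega |w|\,\di\gamma_N\right)^p$. (Here $p$ need not be even, so one should read the numerator as $|\int w|^p$, or simply note that the supremum is attained at the nonnegative torsion function $v$.) So it suffices to consider $w\ge 0$.

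Next form the Gaussian rearrangement $w^\sharp$ defined on $\Omega^\sharp$. By Theorem \ref{polya_szego}, $w^\sharp\in W_0^{1,p}(\Omega^\sharp,\phi_N)$ and
\[
\int_{\Omega^\sharp}|Dw^\sharp|^p\,\di\gamma_N\le\int_\Omega|Dw|^p\,\di\gamma_N .
\]
By the equimeasurability property \eqref{inv} with exponent $1$, and using $w\ge0$,
\[
\int_{\Omega^\sharp}w^\sharp\,\di\gamma_N=\int_\Omega w\,\di\gamma_N .
\]
Moreover $w^\sharp\not\equiv0$, because its $L^1$ norm equals that of $w$. Putting these together,
\[
\frac{\left(\int_\Omega w\,\di\gamma_N\right)^p}{\int_\Omega|Dw|^p\,\di\gamma_N}\le\frac{\left(\int_{\Omega^\sharp} w^\sharp\,\di\gamma_N\right)^p}{\int_{\Omega^\sharp}|Dw^\sharp|^p\,\di\gamma_N}\le T(\Omega^\sharp).
\]
Taking the supremum over $w$ yields \eqref{SVe}.

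The only delicate points are technical. One is the admissibility of $w^\sharp$ in the weighted Sobolev space of the possibly unbounded half-space $\Omega^\sharp$; this is exactly what Theorem \ref{polya_szego} provides. The other is that $\int|Dw|^p\,\di\gamma_N$ is finite and positive for nonzero $w$, which follows from the Faber--Krahn bound $\lambda_1(\Omega)\ge\lambda_1(\Omega^\sharp)>0$ of Theorem \ref{KR}. That bound also guarantees $T(\Omega)<\infty$, so the quantities involved are well defined.
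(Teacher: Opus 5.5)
Your proof is correct and follows the same route as the paper, which simply says that, arguing as for Faber--Krahn, the P\'olya--Szeg\H{o} principle (Theorem \ref{polya_szego}) yields the inequality. Your reduction to $w\ge 0$ and your use of \eqref{inv} with exponent $1$ to keep $\int w\,\di\gamma_N$ unchanged fill in exactly the details the paper leaves implicit.
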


We add some results concerning the properties of the first eigenvalue on a half-space $H_t$, $t\in\R$,
$$
H_t=\{x=(x_1,x_2,\dots,x_N)\in\R^N\,:\,\,x_1<t\}.
$$
We firstly observe that, when one considers the first eigenvalue on $H_t$, the use of Theorem \ref{KR} implies that the first eigenfunction depends only on the variable $x_1$, so we have
\begin{equation}
\label{1dim}
\lambda_1(H_t)
=\min_{w\in W^{1,p}_{0}
\left(
(-\infty,t),
 \phi_{1} 
\right)\setminus\{0\}}
\frac{\dint_{-\infty}^{t} |w'(\sigma)|^{p}\,\phi_{1}(\sigma)\,\di\sigma}
{\dint_{-\infty}^{t} |w(\sigma)|^{p}\,\phi_{1}(\sigma)\,\di\sigma},
\end{equation}
and the first eigenfunction $u>0$, which achieves the minimum above, 
solves the problem
\begin{equation} \label{2eq}
\left\{
\begin{array}
[c]{lll}
-\bigl(|u'|^{p-2}u'\phi_{1}(\sigma)\bigr)'=\lambda_1(H_t)u^{p-1}\phi_{1}(\sigma)& & \sigma\in
(-\infty,t),\\
\\
u\in W^{1,p}_{0}\!
\left(
(-\infty,t),\phi_{1}\right). & & 
\end{array}
\right.
\end{equation}
It is clear that $\lambda_1(H_t)$ is a decreasing function with respect to $t$. Moreover, the
following result holds true.
\begin{proposition}\label{half} Let $H_t$, $t\in\R$, be the half-space defined above.
There exists a positive function $u_1=u_1(x_1)$, depending only on $x_1$, which is an eigenfunction corresponding to $\lambda_1(H_t)$ and
 $\lambda_1(H_t)$ is simple, that is, if $u$ is a solution to problem \eqref{2eq}, 
then $u=\beta u_1$, with $\beta \in \R$. Furthermore the mapping $t\mapsto \lambda_1(H_t)$ is decreasing and, denoting by $u$ the positive eigenfunction such that $||u||_{L^p(H_t,\phi_N)}=1$, the following differentiation formula holds true
\begin{equation}\label{shape}
\frac\di{\di t}\lambda_1(H_t)=-(p-1)|u'(t)|^p\phi_1(t).
\end{equation}    
\end{proposition}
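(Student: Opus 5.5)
Existence and positivity come from the one-dimensional variational problem \eqref{1dim}. The first step is to check that the embedding $W^{1,p}_0((-\infty,t),\phi_1)\hookrightarrow L^p((-\infty,t),\phi_1)$ is compact. The Gaussian weight supplies this: a weighted Poincaré/Hardy-type estimate near $-\infty$ gives $\int_{-\infty}^{-R}|w|^p\phi_1\le C R^{-p}\int|w'|^p\phi_1$, which follows by integrating $(|w|^p\phi_1/\sigma)'$-type identities, using that $\phi_1'=-\sigma\phi_1$. On bounded intervals we have Rellich compactness. Hence a minimizing sequence converges to a minimizer $u_1$. Since $|u_1|$ is also a minimizer, we may take $u_1\ge0$. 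The strong maximum principle for the one-dimensional equation \eqref{2eq}, or more simply ODE uniqueness (if $u_1(\sigma_0)=0$ at an interior minimum then $u_1'(\sigma_0)=0$, and uniqueness for the first-order system in $(u,|u'|^{p-2}u'\phi_1)$ forces $u_1\equiv0$), gives $u_1>0$. By Theorem \ref{KR} and the symmetrization remark, it suffices to work with functions of $x_1$ only.

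For simplicity, I would follow the classical Lindqvist/Belloni–Kawohl argument using hidden convexity. Let $u,v$ be positive minimizers normalized in $L^p(\phi_1)$ and set $w=\big((u^p+v^p)/2\big)^{1/p}$. Then $\|w\|_p=1$, and the convexity of $(a,b)\mapsto$ the Dirichlet integral along the curve $\eta\mapsto(\eta u^p+(1-\eta)v^p)^{1/p}$ gives $\int|w'|^p\phi_1\le\lambda$, with strict inequality unless $u'/u=v'/v$ a.e. Hence $u/v$ is constant. For a sign-changing solution $u$ of \eqref{2eq}, I would instead use the one-dimensional structure: at any zero $\sigma_0<t$ one gets a Dirichlet eigenfunction on $(-\infty,\sigma_0)$ with eigenvalue $\lambda_1(H_t)$. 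But $\lambda_1(H_{\sigma_0})>\lambda_1(H_t)$ by strict monotonicity, so this is a contradiction. Alternatively, the ODE phase-plane uniqueness (initial data $u(t)=0$, $u'(t)$ given) directly yields $u=\beta u_1$, since the solution space of the Cauchy problem at $t$ is one-parameter and scales by homogeneity. I would use this ODE argument; the only care needed is uniqueness for $p$-Laplacian ODEs at points where $u'=0$, which is handled by working with $\psi=|u'|^{p-2}u'\phi_1$.

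Monotonicity follows because extension by zero makes $\lambda_1(H_t)$ nonincreasing. Strictness holds because equality would make the extended $u_1$ a minimizer on the larger half-line that vanishes on an interval, contradicting positivity.

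For \eqref{shape}, I expect the main obstacle to be rigor at the moving endpoint. I would use the change of variables $\sigma\mapsto\sigma+(s-t)$, or better a scaling map of $(-\infty,t)$ onto $(-\infty,s)$, to produce test functions, which gives upper and lower difference quotients. Concretely, I would take $u_s$ as the normalized eigenfunction on $H_s$ and use $u_t$ restricted-and-shifted as a competitor in a Hadamard-type argument. I would also try a direct identity. Multiply \eqref{2eq} by $u'$ and by $\sigma$-derivatives (a Pohozaev identity in 1D) to get
\[
(p-1)|u'(t)|^p\phi_1(t)=\text{boundary term},
\]
then differentiate $\lambda(t)=\int_{-\infty}^t|u_t'|^p\phi_1$ using continuous dependence of $u_t$ on $t$, which follows from simplicity and compactness. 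The derivative of the energy splits into an endpoint contribution plus an interior variation. The interior variation equals $\lambda\,\frac{d}{dt}\|u_t\|_p^p$-type terms, which cancel by the normalization, leaving $-(p-1)|u'(t)|^p\phi_1(t)$. The factor $p-1$ arises from combining the endpoint term $|u'(t)|^p\phi_1(t)$ with $-p|u'(t)|^p\phi_1(t)$, which comes from the boundary flux $|u'|^{p-2}u'\phi_1\,\partial_t u_t$, where $\partial_t u_t(t)=-u'(t)$.
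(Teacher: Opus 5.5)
Your arguments for existence, positivity, simplicity and strict monotonicity are sound. The hidden-convexity argument, or Cauchy uniqueness for the half-linear ODE, is a legitimate substitute for the Lindqvist-type argument the paper invokes.

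The gap is in \eqref{shape}, which is the substantive part of the statement. The only derivation you carry out is the formal Hadamard computation. You differentiate $\int_{-\infty}^t|u_t'|^p\phi_1$ with respect to the moving endpoint, integrate by parts against $\partial_t u_t$, discard the flux term at $-\infty$, and use $\partial_t u_t(t)=-u'(t)$. Each step presupposes that $t\mapsto u_t$ is differentiable, in a sense strong enough to give a trace at the endpoint and control at $-\infty$. Simplicity plus compactness only gives continuity of $t\mapsto u_t$, so the passage from continuity to a derivative is exactly what is missing.

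Your other option, shifted competitors, is the right idea, but a plain translation does not work in Gauss space. Since $\phi_1(y+h)/\phi_1(y)=e^{-hy-h^2/2}$ blows up as $y\to-\infty$ for $h>0$, even finiteness of the Rayleigh quotient of $u(\cdot-h)$ would need growth bounds on $u$ at $-\infty$. Passing to the limit in the difference quotient would need more still, and you have not established either.

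The missing device is the weighted translation $w_1(x)=u(x-h)e^{hx/p}$ on $(-\infty,t+h)$. For it, $\int|w_1|^p\phi_1=e^{h^2/2}\int|u|^p\phi_1$ and $\int|w_1'|^p\phi_1=e^{h^2/2}\int_{-\infty}^t|u'+\frac hp u|^p\phi_1$. The Gaussian factor cancels in the quotient, giving $\limsup_{h\to0^+}h^{-1}\bigl(\lambda_1(H_{t+h})-\lambda_1(H_t)\bigr)\le\int_{-\infty}^t|u'|^{p-2}u'u\,\phi_1$.

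The reverse inequality uses $u_h(x+h)e^{-hx/p}$ as a competitor for $\lambda_1(H_t)$, where $u_h$ is the normalized eigenfunction on $H_{t+h}$. One then passes to the limit in $h^{-1}\int\bigl(|u_h'|^p-|u_h'-\frac hp u_h|^p\bigr)\phi_1$. This needs $u_h\to u$ in a strong enough sense. The paper gets a.e. convergence of $u_h$ and $u_h'$ from convergence of the Rayleigh quotients, uniform convexity and simplicity, and dominates the integrands with explicit pointwise bounds.

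Finally, an integration by parts using the equation gives $\int_{-\infty}^t|u'|^{p-2}u'u\,\phi_1=-\int_{-\infty}^t x\bigl(|u'|^p-\lambda_1(H_t)u^p\bigr)\phi_1$. The one-dimensional Pohozaev identity
\[
x\bigl(|u'|^p-\lambda_1(H_t)u^p\bigr)\phi_1=(p-1)\bigl(|u'|^p\phi_1\bigr)'+\lambda_1(H_t)\bigl(u^p\phi_1\bigr)'
\]
turns this into $-(p-1)|u'(t)|^p\phi_1(t)$. The contributions at $-\infty$ are controlled by the integrability of $|x|(|u'|^p+u^p)\phi_1$, which follows from Proposition \ref{summab} and a Gaussian Hardy inequality. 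Your remark about a 1D Pohozaev identity points in this direction, but in your plan it is not attached to any rigorously computed difference quotient.
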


Let us observe that formula \eqref{shape} can be seen as a shape derivative of the first eigenvalue in a case which, apparently, has been not treated in the literature (see \cite{BHMP} for the case $p=2$), so, in Section \ref{s6} we give a direct proof in the case of half-spaces.

\section{A generalized torsional rigidity}\label{s2}

Let $\Omega $ be an open  set of $ \R^N$. For $\alpha\in(-\infty,\lambda_1(\Omega))$ we consider the following generalization of torsional rigidity
\begin{equation}\label{QQ}
Q_p(\alpha, \Omega)=\sup_{w\in W_{0}^{1,p} \left( \Omega ,\phi_{N} \right)}\left\{-\int_\Omega|Dw(x)|^p\, \di \gamma_N+\alpha\int_\Omega |w(x)|^p\, \di \gamma_N+p\int_\Omega w(x)\, \di \gamma_N\right\}.
\end{equation}
As we will see,
the maximum of the functional
\begin{equation}
\label{F}
F_\alpha(w):=-\int_{\Omega }|Dw(x)|^{p}\,\di\gamma _{N}+\alpha \int_{\Omega
}|w(x)|^{p}\,\di\gamma _{N}+p\int_{\Omega }w(x)\di\gamma _{N}\,,
\end{equation}
is achieved just
for $w=v$, where $v$ is the 
unique weak solution (see Proposition \ref{prop} to problem
\begin{equation} \label{veq.0}
\left\{
\begin{array}
[c]{lll}%
-\dive(\phi_N (x)|Dv|^{p-2}Dv)=\alpha \phi_N (x)|v|^{p-2}v+\phi_N (x)& & \text{in }%
\Omega,\\
\\
v=0 & & \text{on }\partial\Omega\,.
\end{array}
\right. 
\end{equation}
This means that
\[
 v\in W_{0}^{1,p} \left( \Omega ,\phi_{N} \right),
 \]
 and
 \begin{equation}\label{vweak}
  \int_{\Omega} |D v|^{p-2}D vD \varphi \,\di\gamma_N= \alpha \int_{\Omega}|  v|^{p-2}  v \varphi \,\di\gamma_N +\int_{\Omega}  \varphi\, \di\gamma_N\,, 
  \notag
 \end{equation}
 for every $\varphi \in W_0^{1,p}(\Omega, \phi_N)$.

\noindent An easy calculation proves that the following equality holds true:
\begin{equation}\label{Q}
Q_p(\alpha, \Omega)=(p-1)\int_\Omega v(x)\, \di \gamma_N.
\end{equation}
As observed in the introduction, when $\alpha=0$, the generalized torsional rigidity reduces to the $p$-torsional rigidity in the sense that \eqref{TQa}
holds true.

We now derive some useful properties of $Q_p(\alpha, \Omega)$ which are consequences of its  definition.
\begin{proposition}\label{prop}
Let $\Omega\subset \R^N$, with $\gamma_N (\Omega)<1$. Then
\begin{enumerate}[\rm(a)]
\item\label{(a)} $\displaystyle Q_p(\alpha, \Omega)$ is finite$\quad\iff\quad 
-\infty<\alpha<\lambda_1(\Omega);$

\item\label{(b)}
for any  $\alpha < \lambda _{1} \left( \Omega  \right)$
the functional $F_\alpha(w)$, defined in \eqref{F},
 has a unique maximizer 
 $v\ge0$, which  is the unique weak solution to problem \eqref{veq.0},
and therefore \eqref{Q} holds true; 

\item\label{(c)} $\displaystyle Q_p(\alpha, \Omega)$ is increasing with respect to $\Omega$ (in the sense of inclusion), that is,
\[\Omega_1\subset\Omega_2\quad\Longrightarrow \quad 
Q_p(\alpha,\Omega_1)
\le
Q_p(\alpha,\Omega_2);
\]
\item\label{(d)}  if $\alpha<\lambda_1(\Omega^\sharp)$\,, then 
\[Q_p(\alpha,\Omega)\le Q_p(\alpha,\Omega^\sharp).
\]
\end{enumerate}
\end{proposition}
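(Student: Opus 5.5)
I would treat the four items in the order (a)--(b) together, then (c), then (d). Each one is a fairly direct consequence of the variational definition \eqref{QQ}, the characterization \eqref{eigen} of $\lambda_1(\Omega)$, and the symmetrization tools of Section \ref{s1}.

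\emph{Items (a) and (b).} Suppose first $\alpha\ge\lambda_1(\Omega)$. Take $w=tu_1$ with $u_1>0$ the first eigenfunction. Then
\[
F_\alpha(tu_1)=t^p(\alpha-\lambda_1(\Omega))\|u_1\|^p_{L^p(\Omega,\phi_N)}+pt\int_\Omega u_1\,\di\gamma_N ,
\]
which tends to $+\infty$ as $t\to+\infty$, since $\int_\Omega u_1\,\di\gamma_N>0$. So $Q_p=+\infty$. Note that $Q_p\ge F_\alpha(0)=0>-\infty$ always.

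Now let $\alpha<\lambda_1(\Omega)$. Set $\delta=1-\max(\alpha,0)/\lambda_1(\Omega)>0$. By \eqref{eigen},
\[
\int|Dw|^p-\alpha\int|w|^p\ge\delta\int|Dw|^p .
\]
Since $\gamma_N$ is a probability measure, H\"older's inequality and \eqref{eigen} give
\[
\int w\,\di\gamma_N\le\|w\|_{L^p}\le\lambda_1(\Omega)^{-1/p}\|Dw\|_{L^p}.
\]
Hence $F_\alpha(w)\le-\delta\|Dw\|^p+C\|Dw\|$, which is bounded above and coercive on $W_0^{1,p}(\Omega,\phi_N)$. Here $\|Dw\|_{L^p}$ is an equivalent norm, again by \eqref{eigen}.

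Existence of a maximizer follows by the direct method. Take a bounded maximizing sequence; it has a weak limit in $W_0^{1,p}$. One needs upper semicontinuity of $F_\alpha$, and the $\alpha\int|w|^p$ term is the delicate point when $\alpha>0$: compactness of the embedding into $L^p(\Omega,\phi_N)$ is not automatic for unbounded $\Omega$. I would handle this by rewriting
\[
-\int|Dw|^p+\alpha\int|w|^p=-\Bigl(1-\tfrac{\alpha}{\lambda_1}\Bigr)\int|Dw|^p-\tfrac{\alpha}{\lambda_1}\Bigl(\int|Dw|^p-\lambda_1\int|w|^p\Bigr).
\]
Both bracketed pieces are $\ge 0$, but the second is not obviously weakly lsc. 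Instead I would invoke the known compactness of $W^{1,p}(\phi_N)\hookrightarrow L^p(\phi_N)$ on all of $\R^N$, which follows from the logarithmic Sobolev inequality or the Gaussian Poincar\'e inequality with discrete spectrum of the Ornstein--Uhlenbeck operator. Extending by zero, this makes the $L^p$ term and the linear term continuous along the sequence. \textbf{This compactness step is where I expect the main technical obstacle.}

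For the remaining claims in (b):
\begin{itemize}
\item[(i)] Nonnegativity: $F_\alpha(|w|)\ge F_\alpha(w)$, so a maximizer may be taken $\ge0$.
\item[(ii)] Euler--Lagrange equation: the maximizer satisfies the weak form of \eqref{veq.0}.
\item[(iii)] Identity \eqref{Q}: test the equation with $v$ itself to get $\int|Dv|^p=\alpha\int v^p+\int v$, and substitute into $F_\alpha(v)$.
\item[(iv)] Uniqueness of the nonnegative solution: use the hidden convexity (D\'iaz--Saa / Picone type) argument. Test with $(v_1^p-v_2^p)/v_1^{p-1}$ and $(v_2^p-v_1^p)/v_2^{p-1}$, with the usual $\varepsilon$-regularization. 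The $\alpha$-terms cancel, and one is left with
\[
\int(v_1^p-v_2^p)\bigl(v_1^{1-p}-v_2^{1-p}\bigr)\,\di\gamma_N\ge0 ;
\]
the integrand is $\le0$, so $v_1=v_2$. Uniqueness of the maximizer then follows, because any maximizer can be replaced by its absolute value and equality forces it to be a nonnegative solution.
\end{itemize}

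\emph{Item (c).} If $\Omega_1\subset\Omega_2$, extension by zero embeds $W_0^{1,p}(\Omega_1,\phi_N)$ into $W_0^{1,p}(\Omega_2,\phi_N)$ and preserves $F_\alpha$. Taking the supremum gives the inequality. This also holds trivially with the value $+\infty$.

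\emph{Item (d).} Let $v\ge0$ be the maximizer on $\Omega$, which exists since Theorem \ref{KR} gives $\alpha<\lambda_1(\Omega^\sharp)\le\lambda_1(\Omega)$. Then $v^\sharp\in W_0^{1,p}(\Omega^\sharp,\phi_N)$. By Theorem \ref{polya_szego} and \eqref{inv} with exponents $p$ and $1$, we get $F_\alpha(v^\sharp)\ge F_\alpha(v)$. Hence
\[
Q_p(\alpha,\Omega)=F_\alpha(v)\le F_\alpha(v^\sharp)\le Q_p(\alpha,\Omega^\sharp).
\]
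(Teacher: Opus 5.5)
Your treatment of (a), (c), (d) and of the existence part of (b) is essentially the paper's proof. At the compactness step you flag, the paper likewise just invokes the compact embedding of $W_0^{1,p}(\Omega,\phi_N)$ into $L^p(\Omega,\phi_N)$. Note, however, that the Ornstein--Uhlenbeck spectral argument you cite only covers $p=2$. For general $p$, combine local Rellich compactness with a uniform tail bound $\int_{\{|x|>R\}}|w|^p\,\di\gamma_N\le CR^{-1}\|w\|^p_{W^{1,p}(\R^N,\phi_N)}$. This bound is obtained by integrating $|w|^p$ against $\dive\bigl(\tfrac{x}{|x|}\phi_N\bigr)$ away from the origin. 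Deriving $v\ge0$ from $F_\alpha(|w|)\ge F_\alpha(w)$, instead of from the equation, is a valid shortcut, but it applies only to maximizers.

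That is where the gap lies. Item (b) asserts that $v$ is the unique weak solution of \eqref{veq.0}, but your Picone/D\'iaz--Saa argument only compares \emph{nonnegative} solutions. For $0<\alpha<\lambda_1(\Omega)$ the functional $F_\alpha$ is not concave, so nothing you wrote rules out a sign-changing critical point that is not a maximizer. The missing step, which the paper proves before the uniqueness argument, is that every weak solution is nonnegative. Testing \eqref{veq.0} with $\varphi=v_-=\max\{-v,0\}$ gives
\[
\int_\Omega|Dv_-|^p\,\di\gamma_N-\alpha\int_\Omega v_-^p\,\di\gamma_N=-\int_\Omega v_-\,\di\gamma_N\le0 .
\]
By \eqref{eigen}, the left-hand side is at least $\min\{1,1-\alpha/\lambda_1(\Omega)\}\int_\Omega|Dv_-|^p\,\di\gamma_N\ge0$. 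Hence $\int_\Omega v_-\,\di\gamma_N=0$, so $v_-\equiv0$. With this lemma added, your uniqueness result for nonnegative solutions yields uniqueness among all weak solutions, and the rest of your argument goes through.
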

\begin{proof} We prove in sequence the various items.
\vskip.2cm
\noindent \underline{Item (\ref{(a)})}
\vskip.3cm

{  

Let us suppose that $-\infty<\alpha<\lambda_1(\Omega)$.
 For any  $ w \in W_{0}^{1,p} \left( \Omega ,\phi_{N} \right)$ we have
\begin{equation*}
F_{\alpha}(w)\leq \left( \alpha -\lambda _{1}\left( \Omega\right) \right)\int_{\Omega }|w(x)|^{p}\,\di\gamma _{N}+p\int_{\Omega
}\left\vert w(x)\right\vert \di\gamma _{N}.
\end{equation*}
For $\varepsilon>0$, Young inequality yields
\begin{equation*}
F_{\alpha}(w)\leq \left( \alpha+\varepsilon -
\lambda _{1} \left( \Omega  \right) \right)
\int_{\Omega }|w(x)|^{p}\,\di\gamma _{N}
+C(\varepsilon)\gamma_N (\Omega),\qquad\forall w \in
W_{0}^{1,p} \left( \Omega ,\phi_{N} \right)
\end{equation*}
and, if $\varepsilon$ is sufficiently small, we have
\begin{equation*}
F_{\alpha}(w)\leq C,\qquad
\forall w \in 
W_{0}^{1,p} \left( \Omega ,\phi_{N} \right),
\end{equation*}
which proves that $Q_p(\alpha,\Omega)$ is finite.

%
We now suppose that $Q_p(\alpha,\Omega)$ is finite. Let us assume, by contradiction, that
\begin{equation}
\text{$\alpha \geq \lambda _{1}$}\left( \text{$\Omega $}\right) .  \label{a>}
\end{equation}
Consider the family of functions $\left\{ tw_{1}\right\} $
where $t>0$ and $w_{1}$ is the positive eigenfunction corresponding to $\lambda _{1}$$\left( \text{$\Omega $}
\right) $, such that
\begin{equation*}
\left\Vert w_{1}\right\Vert _{L^{p}(\Omega,\phi_{N})}=1.
\end{equation*}
Using (\ref{a>}), we have
\begin{equation*}
F_{\alpha}\left( tw_{1}\right) =-t^{p}\int_{\Omega }|Dw_{1}(x)|^{p}\,\di\gamma
_{N}+\alpha t^{p}\int_{\Omega }w_{1}(x)^{p}\,\di\gamma _{N}+pt\int_{\Omega
}w_{1}(x)\di\gamma _{N}
\end{equation*}

\begin{equation*}
=pt\int_{\Omega }w_{1}(x)\di\gamma _{N}+t^{p}\left( \alpha -\text{$\lambda _{1}
$}\left( \text{$\Omega $}\right) \right)\int_{\Omega }w_{1}(x)^{p}\,\di\gamma _{N} \geq pt\int_{\Omega
}w_{1}(x)\di\gamma _{N}.
\end{equation*}
It follows that
\begin{equation*}
\lim_{t\rightarrow +\infty }F_{\alpha}\left( tw_{1}\right) =+\infty 
\end{equation*}
and therefore
\begin{equation*}
Q_{p}(\alpha ,\Omega ) =+\infty .
\end{equation*}
This is a contradiction, as we are under the assumption that 
$Q_{p}(\alpha ,\Omega )$ is finite.

\vskip.2cm
\noindent \underline{Item (\ref{(b)})}
\vskip.3cm

We begin by proving that for all $ \alpha < \lambda_{1}\left( \Omega \right)$
the functional $F$ attains a maximum.  We  assume that $\alpha >0$, since, otherwise, 
the proof becomes even simpler.

From the previous considerations, we know that
\begin{align}
\sup_{w\in W_{0}^{1,p} (\Omega ,\phi_{N})} F_{\alpha}(w) &= \sup_{w\in W_{0}^{1,p} (\Omega ,\phi_{N})} \left\{ -\int_{\Omega} |Dw(x)|^{p}\,\di\gamma_{N} \right. \label{l>} \\
&\quad \left. +\alpha \int_{\Omega }|w(x)|^{p}\,\di\gamma_{N} +p\int_{\Omega }w(x)\di\gamma_{N}\right\} < + \infty. \nonumber
\end{align}
Let $\{w_{k}\}_{k\in \mathbb{N}}\subset W_{0}^{1,p} \left( \Omega ,\phi_{N} \right)$ be a maximizing sequence. 
By (\ref{l>}), there exists a constant $C$ such that
\begin{equation*}
\int_{\Omega }|Dw_{k}(x)|^{p}\,\di\gamma _{N}-\alpha \int_{\Omega}
|w_{k}(x)|^{p}\,\di\gamma _{N}-p\int_{\Omega }w_{k}(x)\di\gamma _{N}\leq C
\qquad \forall k\in \mathbb{N}.
\end{equation*}
Hence, for every $k\in\mathbb{N}$ we obtain
$$
\int_{\Omega }|Dw_{k}(x)|^{p}\,\di\gamma _{N} 
\leq \frac{\alpha}{\lambda _{1}\left( \Omega \right)}
       \int_{\Omega }|Dw_{k}(x)|^{p}\,\di\gamma _{N}
      +p\int_{\Omega }|w_{k}(x)|\,\di\gamma _{N}+C.
$$
 Young inequality ensures that  $\forall \varepsilon >0$ there exists 
$C_{\varepsilon }>0$ such that
\begin{equation*}
\left(1-\frac{\alpha}{\lambda _{1}\left( \Omega \right)}\right)
\int_{\Omega }|Dw_{k}(x)|^{p}\,\di\gamma _{N}
\leq \varepsilon \int_{\Omega }|w_{k}(x)|^{p}\,\di\gamma _{N}
   +C_{\varepsilon}\gamma _{N}(\Omega )+C.
\end{equation*}
Finally, using the continuous embedding of 
$W_{0}^{1,p}\left( \Omega ,\di\gamma _{N}\right)$
into 
$L^{p}\left( \Omega ,\di\gamma _{N}\right)$, the arbitrariness of 
$\varepsilon$ implies that
\begin{equation}
\int_{\Omega }|Dw_{k}(x)|^{p}\,\di\gamma _{N}\leq C
\qquad \forall k\in\mathbb{N},  \label{Equi}
\end{equation}
where, here and in the sequel, $C$ denotes a constant whose value may vary 
from line to line, but which does not depend on the significant parameters 
of the problem.

Finally, from the compact embedding of 
$W_{0}^{1,p}\left( \Omega ,\phi_{N}\right)$
into
$L^{p}\left( \Omega ,\gamma _{N}\right)$
one deduces that, up to a not relabelled subsequence, 
there exists a function $w\in W_{0}^{1,p}\left( \Omega ,\phi_{N}\right)$ 
such that
\begin{equation*}
\left\{ 
\begin{array}{cc}
w_{k}\rightarrow w & \text{weakly in }W_{0}^{1,p}\left( \Omega ,\phi_{N}\right), \\
\\
w_{k}\rightarrow w & \text{strongly in }L^{p}\left( \Omega ,\phi_{N}\right).
\end{array}
\right.
\end{equation*}
From this, in a standard way, the claim follows immediately. Hence, since $F_{\alpha}$ attains its 
maximum at $w$, its Euler equation in \eqref{veq.0} admits at least one solution.

We now address the issues related to uniqueness. First of all we observe that any solution to problem \eqref{veq.0} is nonnegative. Indeed, using $v_-=\max\{-v,0\}$ as test function in \eqref{veq.0}, we have
\begin{equation*}
\int_{\{v<0\}}|Dv|^p\, \di \gamma_N-\alpha\int_{\{v<0\}} |v|^p\, \di \gamma_N=\int_{\{v<0\}} v\, \di \gamma_N\,.
\end{equation*}
Since the right-hand side is negative and $\alpha<\lambda_1(\Omega)$, this   gives a contradiction if $v_-\not \equiv0$.

The uniqueness of the solution to problem \eqref{veq.0} can be proven using an argument which goes back to \cite{BO} (for the case $p=2$) and which has been used in \cite{DS} (and subsequently refined in \cite{Lin}).

Let $u\not\equiv v$ be two solutions to problem \eqref{veq.0}. For $0<\sigma<k<+\infty$ we consider the test functions
\begin{equation}\label{test}
\varphi_1=u+\sigma-\frac{\bigl(T_k(v+\sigma)\bigr)^p}{(u+\sigma)^{p-1}},\qquad
\varphi_2=v+\sigma-\frac{\bigl(T_k(u+\sigma)\bigr)^p}{(v+\sigma)^{p-1}},
\end{equation}
where $T_k(s)=\min\{s,k\}$ denotes the usual truncation function. It is immediate to observe that $\varphi_1, \varphi_2\in
W_{0}^{1,p} \left( \Omega ,\phi_{N} \right)$, so we can use $\varphi_1$ in the equation satisfied by $u$ and $\varphi_2$ in the equation satisfied by $v$. Using the notation $u_\sigma=u+\sigma$, $v_\sigma=v+\sigma$, we obtain
\begin{eqnarray*}
\int_\Omega|Du|^p\, \di \gamma_N-p\int_\Omega|Du|^{p-2}DuD\bigl(T_k(v_\sigma)\bigr)\frac{\bigl(T_k(v_\sigma)\bigr)^{p-1}}{(u_\sigma)^{p-1}}\, \di \gamma_N
+(p-1)\int_\Omega|Du|^{p}\frac{\bigl(T_k(v_\sigma)\bigr)^{p}}{(u_\sigma)^{p}}\, \di \gamma_N=\\
=\alpha\int_\Omega \left(\frac u{u_\sigma}\right)^{p-1}\left((u_\sigma)^p-\bigl(T_k(v_\sigma)\bigr)^p\right)\, \di \gamma_N+\int_\Omega \left(u_\sigma-\frac{\bigl(T_k(v_\sigma)\bigr)^p}{(u_\sigma)^{p-1}}\right)\, \di \gamma_N,
\\
\int_\Omega|Dv|^p\, \di \gamma_N-p\int_\Omega|Dv|^{p-2}DvD\bigl(T_k(u_\sigma)\bigr)\frac{\bigl(T_k(u_\sigma)\bigr)^{p-1}}{(v_\sigma)^{p-1}}\, \di \gamma_N
+(p-1)\int_\Omega|Dv|^{p}\frac{\bigl(T_k(u_\sigma)\bigr)^{p}}{(v_\sigma)^{p}}\, \di \gamma_N=\\
=\alpha\int_\Omega \left(\frac v{v_\sigma}\right)^{p-1}\left((v_\sigma)^p-\bigl(T_k(u_\sigma)\bigr)^p\right)\, \di \gamma_N+\int_\Omega \left(v_\sigma-\frac{\bigl(T_k(u_\sigma)\bigr)^p}{(v_\sigma)^{p-1}}\right)\, \di \gamma_N.
\end{eqnarray*}
Summing the above equalities and using Young inequalities
\begin{equation*}
\left|p|Du|^{p-2}DuD\bigl(T_k(v_\sigma)\bigr)\frac{\bigl(T_k(v_\sigma)\bigr)^{p-1}}{(u_\sigma)^{p-1}}\right|
\le |D\bigl(T_k(v_\sigma)\bigr)|^p+(p-1)|Du|^{p}\frac{\bigl(T_k(v_\sigma)\bigr)^{p}}{(u_\sigma)^{p}}
\end{equation*}
\begin{equation*}
\left|p|Dv|^{p-2}DvD\bigl(T_k(u_\sigma)\bigr)\frac{\bigl(T_k(u_\sigma)\bigr)^{p-1}}{(v_\sigma)^{p-1}}\right|
\le |D\bigl(T_k(u_\sigma)\bigr)|^p+(p-1)|Dv|^{p}\frac{\bigl(T_k(u_\sigma)\bigr)^{p}}{(v_\sigma)^{p}}
\end{equation*}
we obtain
\begin{eqnarray*}
\alpha\left(\int_\Omega \left(\frac u{u_\sigma}\right)^{p-1}\left((u_\sigma)^p-\bigl(T_k(v_\sigma)\bigr)^p\right)\, \di \gamma_N
+\int_\Omega \left(\frac v{v_\sigma}\right)^{p-1}\left((v_\sigma)^p-\bigl(T_k(u_\sigma)\bigr)^p\right)\, \di \gamma_N\right)+\\
+\int_\Omega \left(u_\sigma-\frac{\bigl(T_k(v_\sigma)\bigr)^p}{(u_\sigma)^{p-1}}\right)\, \di \gamma_N
+\int_\Omega \left(v_\sigma-\frac{\bigl(T_k(u_\sigma)\bigr)^p}{(v_\sigma)^{p-1}}\right)\, \di \gamma_N\\
\ge\int_\Omega|Du|^p-\int_\Omega|D\bigl(T_k(u_\sigma)\bigr)|^p\, \di \gamma_N+\int_\Omega|Dv|^p-\int_\Omega|D\bigl(T_k(v_\sigma)\bigr)|^p\, \di \gamma_N\ge0.
\end{eqnarray*}
Therefore we have:
\begin{eqnarray*}
\alpha\left(\int_\Omega \left(\frac u{u_\sigma}\right)^{p-1}\left((u_\sigma)^p-\bigl(T_k(v_\sigma)\bigr)^p\right)\, \di \gamma_N
+\int_\Omega \left(\frac v{v_\sigma}\right)^{p-1}\left((v_\sigma)^p-\bigl(T_k(u_\sigma)\bigr)^p\right)\, \di \gamma_N\right)+\\
+\int_\Omega \left(u_\sigma-\frac{\bigl(T_k(v_\sigma)\bigr)^p}{(u_\sigma)^{p-1}}\right)\, \di \gamma_N
+\int_\Omega \left(v_\sigma-\frac{\bigl(T_k(u_\sigma)\bigr)^p}{(v_\sigma)^{p-1}}\right)\, \di \gamma_N\ge0.
\end{eqnarray*}
We now pass to the limit as $k$ goes to $+\infty$ by monotone convergence  and we obtain:
\begin{eqnarray}\label{passig}
\alpha\int_\Omega \left(\left(\frac u{u_\sigma}\right)^{p-1}-\left(\frac v{v_\sigma}\right)^{p-1}\right)\left((u_\sigma)^p-(v_\sigma)^p\right)\, \di \gamma_N\ge\qquad\qquad\\
\notag\ge\int_\Omega \left(\left(\frac 1{v_\sigma}\right)^{p-1}-\left(\frac 1{u_\sigma}\right)^{p-1}\right)\left((u_\sigma)^p-(v_\sigma)^p\right)\, \di \gamma_N.
\end{eqnarray}
Now we pass to the limit as $\sigma  $ goes to zero in the last inequality. 
Firstly, since, 
\[
\lim_{\sigma\to 0}\left(\left(\frac u{u_\sigma}\right)^{p-1}-\left(\frac v{v_\sigma}\right)^{p-1}\right)=0\,, \qquad \hbox{a. e. in }\Omega\,,
\]
and, for $\sigma<k$, 
\[
\left |\left(\left(\frac u{u_\sigma}\right)^{p-1}-\left(\frac v{v_\sigma}\right)^{p-1}\right)\left((u_\sigma)^p-(v_\sigma)^p\right)\right |\le 2(u^p+v^p+2^pk^p)
\,,
\]
Lebesgue dominated convergence theorem implies
\begin{equation}\label{leb}
\lim_{\sigma\rightarrow0^+}\int_\Omega \left(\left(\frac u{u_\sigma}\right)^{p-1}-\left(\frac v{v_\sigma}\right)^{p-1}\right)\left((u_\sigma)^p-(v_\sigma)^p\right)\, \di \gamma_N=0,
\end{equation}
Therefore, for $\sigma \to 0$, inequality \eqref{passig} gives
\begin{equation}\label{abs}
\liminf_{\sigma\rightarrow0^+}\int_\Omega \left(\left(\frac 1{v_\sigma}\right)^{p-1}-\left(\frac 1{u_\sigma}\right)^{p-1}\right)\left((u_\sigma)^p-(v_\sigma)^p\right)\, \di \gamma_N
\le0,
\end{equation}
Moreover, since 
\begin{eqnarray*}
\left(\left(\frac 1{v_\sigma}\right)^{p-1}-\left(\frac 1{u_\sigma}\right)^{p-1}\right)\left((u_\sigma)^p-(v_\sigma)^p\right)\ge0,
\end{eqnarray*}
if $u\not\equiv v$, by Fatou lemma we have
\begin{eqnarray*}
\liminf_{\sigma\rightarrow0^+}\int_\Omega \left(\left(\frac 1{v_\sigma}\right)^{p-1}-\left(\frac 1{u_\sigma}\right)^{p-1}\right)\left((u_\sigma)^p-(v_\sigma)^p\right)\, \di \gamma_N\ge\\
\ge\int_\Omega \left(\left(\frac 1{v}\right)^{p-1}-\left(\frac 1{u}\right)^{p-1}\right)\left(u^p-v^p\right)\, \di \gamma_N>0\,.
\end{eqnarray*}
This inequality contradicts \eqref{abs}, and the claim is proved.
\vskip.2cm
\noindent \underline{Item (\ref{(c)})}
\vskip.3cm
The claim follows immediately from the definition of $Q_p(\alpha, \Omega)$.

\vskip.2cm
\noindent \underline{Item (\ref{(d)})}
\vskip.3cm
The claim follows immediately from P\'olya-Szeg\H o principle given by Theorem \ref{polya_szego} and property \eqref{inv}. 
}
\end{proof}
Now we prove some properties of $Q_p(\alpha, \Omega)$ when $\Omega$ is fixed and the parameter $\alpha$ varies.
\begin{proposition}\label{propa}
Let $\Omega\subset \R^N$, with $\gamma_N (\Omega)<1$. Then:
\begin{enumerate}[\rm(a)]
\item\label{(aa)} if $v_{(\alpha)}$ denotes the solution to problem \eqref{veq.0} for a given value of the parameter $\alpha\in (-\infty,\lambda_1(\Omega))$, we have:
\begin{equation*}
v_{(\alpha)}(x)\le v_{(\beta)}(x), \qquad x\in \Omega,\>-\infty<\alpha<\beta<\lambda_1(\Omega);
\end{equation*}
\item\label{(ab)} $\quad\displaystyle Q_p(\alpha, \Omega)$ is increasing with respect to $\alpha$ and, if $v$ solves \eqref{veq.0}, it holds
\[\dfrac\di{\di\alpha}Q_p(\alpha,\Omega)=\int_\Omega |v(x)|^p\, \di\gamma_N;
\]
\item\label{(ac)} $\quad\displaystyle\lim_{\alpha\rightarrow-\infty} Q_p(\alpha,\Omega)=0;$
\item\label{(ad)} $\quad\displaystyle\lim_{\alpha\rightarrow \lambda_1(\Omega)^-} Q_p(\alpha,\Omega)=+\infty.$
\end{enumerate}
\end{proposition}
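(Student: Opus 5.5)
For item (a) I will run a comparison argument. Fix $\alpha<\beta<\lambda_1(\Omega)$ and write $v_\alpha=v_{(\alpha)}$, $v_\beta=v_{(\beta)}$. Both are nonnegative by Proposition \ref{prop}(b). Hence
\[
-\dive(\phi_N|Dv_\beta|^{p-2}Dv_\beta)=\beta v_\beta^{p-1}\phi_N+\phi_N\ge \alpha v_\beta^{p-1}\phi_N+\phi_N ,
\]
so $v_\beta$ is a supersolution of the $\alpha$-problem. I then want $v_\alpha\le v_\beta$. Since the zero-order term $\alpha v^{p-1}$ may have the wrong sign when $\alpha>0$, the plain test function $(v_\alpha-v_\beta)_+$ is not enough. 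Instead I will repeat the uniqueness argument of Proposition \ref{prop}(b), using the test functions \eqref{test}, but restricted to the set $\{v_\alpha>v_\beta\}$ in the spirit of the Díaz–Saa inequality. This gives
\[
\int_{\{v_\alpha>v_\beta\}}\Bigl(\tfrac{1}{v_\beta^{p-1}}-\tfrac{1}{v_\alpha^{p-1}}\Bigr)(v_\alpha^p-v_\beta^p)\,\di\gamma_N\le \int_{\{v_\alpha>v_\beta\}}(\alpha-\beta)(\ldots)\le 0 .
\]
The source term $1$ divided by $v^{p-1}$ produces exactly the monotone structure used there, and the $\alpha$ and $\beta$ terms contribute a sign since $\alpha<\beta$. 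It follows that $\{v_\alpha>v_\beta\}$ is null. The step I expect to be the main obstacle is this truncation and passage to the limit ($k\to\infty$, $\sigma\to0$) on the superlevel set $\{v_\alpha>v_\beta\}$; I will copy the estimates \eqref{passig}–\eqref{abs} verbatim. An alternative route is to notice that $v_\alpha$ is a subsolution of the $\beta$-problem, so that monotone iteration starting from $v_\alpha$ converges to a solution above $v_\alpha$, which equals $v_\beta$ by uniqueness. The difficulty there is an a priori bound, which holds because $\beta<\lambda_1$.

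For item (b), monotonicity follows from (a) and \eqref{Q}, or directly: $F_\alpha(w)$ is nondecreasing in $\alpha$ for each fixed $w$, hence so is its supremum. For the derivative, note that $Q_p(\cdot,\Omega)$ is a supremum of affine functions of $\alpha$, so it is convex. Moreover $F_\beta(v_\alpha)\le Q_p(\beta)$ and $F_\alpha(v_\beta)\le Q_p(\alpha)$ give
\[
(\beta-\alpha)\int_\Omega v_\alpha^p\,\di\gamma_N\le Q_p(\beta,\Omega)-Q_p(\alpha,\Omega)\le(\beta-\alpha)\int_\Omega v_\beta^p\,\di\gamma_N .
\]
It then suffices to show that $\alpha\mapsto\int_\Omega v_\alpha^p\,\di\gamma_N$ is continuous. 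Using (a), $v_\beta\downarrow$ some limit as $\beta\downarrow\alpha$, and $v_\beta\uparrow$ some limit as $\beta\uparrow\alpha$. The energy bounds \eqref{Equi} are uniform for $\beta$ in compact subsets of $(-\infty,\lambda_1)$. The compact embedding lets me pass to the limit in the weak formulation, and uniqueness identifies the limit as $v_\alpha$. Monotone convergence then yields the derivative formula.

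For item (c), test the equation with $v$: $\int|Dv|^p-\alpha\int v^p=\int v$. For $\alpha<0$ this gives $|\alpha|\int v^p\le\int v\le \gamma_N(\Omega)^{1-1/p}(\int v^p)^{1/p}$, so $\int v^p\le C|\alpha|^{-p/(p-1)}\to0$. Hence $Q_p=(p-1)\int v\to0$. For item (d), use $F_\alpha(tw_1)$ with $w_1$ the normalized first eigenfunction:
\[
Q_p(\alpha,\Omega)\ge t^p(\alpha-\lambda_1(\Omega))+pt\int w_1\,\di\gamma_N .
\]
Optimizing in $t$ gives a lower bound of order $(\lambda_1-\alpha)^{-1/(p-1)}$ times a positive constant, which tends to $+\infty$.
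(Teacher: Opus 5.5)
Your proposal follows the paper's proof essentially item by item: D\'iaz--Saa type test functions restricted to $\{v_{(\alpha)}>v_{(\beta)}\}$ in (a), the two-sided estimate \eqref{FQa} combined with $L^p$-continuity of $\alpha\mapsto v_{(\alpha)}$ in (b), testing the equation with $v$ and H\"older's inequality in (c), and scaled first eigenfunctions in (d), where your optimization in $t$ only makes the paper's bound quantitative. The one step to tighten is in (b): strong $L^p$ convergence from the compact embedding does not by itself let you pass to the limit in $|Dv_{(\beta)}|^{p-2}Dv_{(\beta)}$ (the paper quotes an a.e.\ convergence result for gradients here), but you can bypass this by letting $\beta\to\alpha$ in $F_\beta(v_{(\beta)})\ge F_\beta(w)$ and using weak lower semicontinuity of $\int_\Omega|Dw|^p\,\di\gamma_N$, so that the limit maximizes $F_\alpha$ and therefore equals $v_{(\alpha)}$ by Proposition \ref{prop}, item (b).
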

\begin{proof} We prove in sequence the various items.
\vskip.2cm
\noindent \underline{Item (\ref{(aa)})}
\vskip.3cm
The claim can be proved proceeding as in the proof of item (\ref{(b)}) of Proposition \ref{prop}.

Let us fix $-\infty<\alpha<\beta<\lambda_1(\Omega)$ and let us put
\begin{equation*}
u=v_{(\alpha)},\qquad w=v_{(\beta)}.
\end{equation*}
For $0<\sigma<k<+\infty$ we consider the test functions
\begin{equation*}
\varphi_1=\frac{\left(\bigl(T_k(u+\sigma)\bigr)^p-{\bigl(T_k(w+\sigma)\bigr)^p}\right)_+}{(u+\sigma)^{p-1}},\qquad
\varphi_2=\frac{\left(\bigl(T_k(u+\sigma)\bigr)^p-{\bigl(T_k(w+\sigma)\bigr)^p}\right)_+}{(w+\sigma)^{p-1}},
\end{equation*}
where, for $s\in\R$, we use the notation $T_k(s)=\min\{s,k\}$, $s_+=\max\{s,0\}$. It is immediate to observe that $\varphi_1, \varphi_2\in
W_{0}^{1,p} \left( \Omega ,\phi_{N} \right)$, so we can use $\varphi_1$ in the equation satisfied by $u$ and $\varphi_2$ in the equation satisfied by $w$. Using the notation $u_\sigma=u+\sigma$, $w_\sigma=w+\sigma$ and $E_k=\{x:T_k(u_\sigma(x))> T_k(w_\sigma(x))\}$, we obtain
\begin{eqnarray*}
\int_{E_k}|Du|^{p-2}DuD\left(\frac{\bigl(T_k(u_\sigma)\bigr)^p}{u_\sigma^{p-1}}\right)\, \di \gamma_N-p\int_{E_k}|Du|^{p-2}DuD\bigl(T_k(w_\sigma)\bigr)\frac{\bigl(T_k(w_\sigma)\bigr)^{p-1}}{(u_\sigma)^{p-1}}\, \di \gamma_N+
\\
+(p-1)\int_{E_k}|Du|^{p}\frac{\bigl(T_k(w_\sigma)\bigr)^{p}}{(u_\sigma)^{p}}\, \di \gamma_N=\\
=\alpha\int_{E_k} \left(\frac u{u_\sigma}\right)^{p-1}\left(\bigl(T_k(u_\sigma)\bigr)^p-\bigl(T_k(w_\sigma)\bigr)^p\right)\, \di \gamma_N+\int_{E_k} \left(\frac{(T_k(u_\sigma))^p-\bigl(T_k(w_\sigma)\bigr)^p}{(u_\sigma)^{p-1}}\right)\, \di \gamma_N,
\end{eqnarray*}
\begin{eqnarray*}
\int_{E_k}|Dw|^{p-2}DwD\left(\frac{\bigl(T_k(w_\sigma)\bigr)^p}{w_\sigma^{p-1}}\right)\, \di \gamma_N-p\int_{E_k}|Dw|^{p-2}DwD\bigl(T_k(u_\sigma)\bigr)\frac{\bigl(T_k(u_\sigma)\bigr)^{p-1}}{(w_\sigma)^{p-1}}\, \di \gamma_N+\\
+(p-1)\int_{E_k}|Dw|^{p}\frac{\bigl(T_k(u_\sigma)\bigr)^{p}}{(w_\sigma)^{p}}\, \di \gamma_N=\\
=\beta\int_{E_k} \left(\frac w{w_\sigma}\right)^{p-1}\left(\bigl(T_k(w_\sigma)\bigr)^p-\bigl(T_k(u_\sigma)\bigr)^p\right)\, \di \gamma_N+\int_{E_k} \left(\frac{(T_k(w_\sigma))^p-\bigl(T_k(u_\sigma)\bigr)^p}{(w_\sigma)^{p-1}}\right)\, \di \gamma_N.
\end{eqnarray*}

Summing the above equalities and using Young inequalities
\begin{equation*}
\left|p|Du|^{p-2}DuD\bigl(T_k(w_\sigma)\bigr)\frac{\bigl(T_k(w_\sigma)\bigr)^{p-1}}{(u_\sigma)^{p-1}}\right|
\le |D\bigl(T_k(w_\sigma)\bigr)|^p+(p-1)|Du|^{p}\frac{\bigl(T_k(w_\sigma)\bigr)^{p}}{(u_\sigma)^{p}}
\end{equation*}
\begin{equation*}
\left|p|Dw|^{p-2}DwD\bigl(T_k(u_\sigma)\bigr)\frac{\bigl(T_k(u_\sigma)\bigr)^{p-1}}{(w_\sigma)^{p-1}}\right|
\le |D\bigl(T_k(u_\sigma)\bigr)|^p+(p-1)|Dw|^{p}\frac{\bigl(T_k(u_\sigma)\bigr)^{p}}{(w_\sigma)^{p}}
\end{equation*}
we obtain
\begin{align}\label{Tk}
\alpha\int_{E_k} \left(\left(\frac u{u_\sigma}\right)^{p-1}-\left(\frac w{w_\sigma}\right)^{p-1}\right)\left(\bigl(T_k(u_\sigma)\bigr)^p-\bigl(T_k(w_\sigma)\bigr)^p\right)\, \di \gamma_N&\\
+(\beta-\alpha)\int_{E_k} \left(\frac w{w_\sigma}\right)^{p-1}\left(\bigl(T_k(w_\sigma)\bigr)^p-\bigl(T_k(u_\sigma)\bigr)^p\right)\, \di \gamma_N&\notag\\
+\int_{E_k} \left(\frac{(T_k(u_\sigma))^p-\bigl(T_k(w_\sigma)\bigr)^p}{(u_\sigma)^{p-1}}\right)\, \di \gamma_N
+\int_{E_k} \left(\frac{(T_k(w_\sigma))^p-\bigl(T_k(u_\sigma)\bigr)^p}{(w_\sigma)^{p-1}}\right)\, \di \gamma_N&\ge\notag\\
\ge\int_{E_k}|Du|^{p-2}DuD\left(\frac{\bigl(T_k(u_\sigma)\bigr)^p}{u_\sigma^{p-1}}\right)\, \di \gamma_N-\int_{E_k}|D\bigl(T_k(u_\sigma)\bigr)|^p\, \di \gamma_N&+\notag\\
+\int_{E_k}|Dw|^{p-2}DwD\left(\frac{\bigl(T_k(w_\sigma)\bigr)^p}{w_\sigma^{p-1}}\right)\, \di \gamma_N-\int_{E_k}|D\bigl(T_k(w_\sigma)\bigr)|^p\, \di \gamma_N.&\notag
\end{align}
We can pass to the limit as $k$ goes to $+\infty$ on the left-hand side by dominated convergence, 
in view of the fact that $\chi_{E_k}$ converges pointwise to $\chi_{u>w}$. As regards the integrals on the right-hand side of \eqref{Tk}, we have
\begin{align*}
&\lim_{k\rightarrow+\infty}\left(\int_{E_k}|Du|^{p-2}DuD\left(\frac{\bigl(T_k(u_\sigma)\bigr)^p}{u_\sigma^{p-1}}\right)\, \di \gamma_N-\int_{E_k}|D\bigl(T_k(u_\sigma)\bigr)|^p\, \di \gamma_N\right)=\\
&\quad=-\lim_{k\rightarrow+\infty}(p-1)\int_{E_k\cap\{u_\sigma>k\}}|Du|^{p}\frac{k^p}{u_\sigma^{p}}\, \di \gamma_N=0,
\end{align*}
with an analogous result for the terms on the right-hand side of \eqref{Tk} which contain $w$.

Then we get:

\begin{eqnarray*}
\alpha\int_{u>w} \left(\left(\frac u{u_\sigma}\right)^{p-1}-\left(\frac w{w_\sigma}\right)^{p-1}\right)\left((u_\sigma)^p-(w_\sigma)^p\right)\, \di \gamma_N+\\
+(\beta-\alpha)\int_{u>w} \left(\frac w{w_\sigma}\right)^{p-1}\left((w_\sigma)^p-(u_\sigma)^p\right)\, \di \gamma_N\ge\\
\ge\int_{u>w} \left(\left(\frac 1{w_\sigma}\right)^{p-1}-\left(\frac 1{u_\sigma}\right)^{p-1}\right)\left((u_\sigma)^p-(w_\sigma)^p\right)\, \di \gamma_N.
\end{eqnarray*}
It is clear that 
\begin{equation*}
\lim_{\sigma\rightarrow0^+}\int_{u>w} \left(\left(\frac u{u_\sigma}\right)^{p-1}-\left(\frac w{w_\sigma}\right)^{p-1}\right)\left((u_\sigma)^p-(w_\sigma)^p\right)\, \di \gamma_N=0,
\end{equation*}
and
\begin{equation*}
\lim_{\sigma\rightarrow0^+}\int_{u>w} \left(\frac w{w_\sigma}\right)^{p-1}\left((w_\sigma)^p-(u_\sigma)^p\right)\, \di \gamma_N=\int_{u>w} \left(w^p-u^p\right)\, \di \gamma_N\le0,
\end{equation*}
then
\begin{equation}\label{absa}
\liminf_{\sigma\rightarrow0^+}\int_{u>w} \left(\left(\frac 1{w_\sigma}\right)^{p-1}-\left(\frac 1{u_\sigma}\right)^{p-1}\right)\left((u_\sigma)^p-(w_\sigma)^p\right)\, \di \gamma_N
\le0,
\end{equation}
where the integrand is nonnegative, that is,
\begin{eqnarray*}
\left(\left(\frac 1{w_\sigma}\right)^{p-1}-\left(\frac 1{u_\sigma}\right)^{p-1}\right)\left((u_\sigma)^p-(w_\sigma)^p\right)
\ge0.
\end{eqnarray*}
If $u> w$ on a set of positive measure, by Fatou lemma we have
\begin{eqnarray*}
\liminf_{\sigma\rightarrow0^+}\int_{u>w} \left(\left(\frac 1{w_\sigma}\right)^{p-1}-\left(\frac 1{u_\sigma}\right)^{p-1}\right)\left((u_\sigma)^p-(w_\sigma)^p\right)\, \di \gamma_N\ge\\
\ge\int_{u>w} \left(\left(\frac 1{w}\right)^{p-1}-\left(\frac 1{u}\right)^{p-1}\right)\left(u^p-w^p\right)\, \di \gamma_N>0,
\end{eqnarray*}
which contradicts \eqref{absa}, and the claim is proved.
\vskip.2cm
\noindent \underline{Item (\ref{(ab)})}
\vskip.3cm

The monotonicity of $Q_p(\alpha, \Omega)$ with respect to $\alpha$ can be proven from the definition.
In order to prove the differentiation formula we firstly show that, for every $\alpha\in(-\infty, \lambda_1(\Omega))$, using the notation of item (\ref{(aa)}), we have
\begin{equation}\label{conve}
v_{(\alpha+\varepsilon)}\rightarrow v_{(\alpha)}, \quad \text{strongly in }L^p(\Omega,\phi_N),\text{  as }\varepsilon\rightarrow0.
\end{equation}
Let us put
$$
u_\varepsilon =v_{(\alpha+\varepsilon)}, \qquad u = v_{(\alpha)}.$$
Using the equation in \eqref{veq.0} satisfied by $u_\varepsilon$, we have, for $\delta>0$ suitably small and  for a suitable constant $C(\delta)>0$,
\begin{eqnarray*}
\int_\Omega|Du_\varepsilon|^p\, \di \gamma_N=(\alpha+\varepsilon)\int_\Omega |u_\varepsilon|^p\, \di \gamma_N+p\int_\Omega u_\varepsilon\, \di \gamma_N\\
\le\left(\frac{\alpha+\varepsilon}{\lambda_1(\Omega)}+\delta\right)\int_\Omega|Du_\varepsilon|^p\, \di \gamma_N+C(\delta)\gamma_N(\Omega).
\end{eqnarray*}
This means that $u_\varepsilon$ is bounded in $W^{1,p}(\Omega,\phi_N)$. Then there exists a subsequence $u_{\varepsilon_h}$ which strongly converges in $L^p(\Omega,\phi_N)$ for $\varepsilon$ which goes to 0. Actually, in view of the monotonicity proven in item
(\ref{(aa)}), we can say that the whole sequence $u_\varepsilon$ is such that
\begin{equation}\label{convu}
u_\varepsilon\rightarrow \bar u, \quad \text{strongly in }L^p(\Omega,\phi_N),\text{  as }\varepsilon\rightarrow0,
\end{equation}
for some $\bar u\in L^p(\Omega,\phi_N)$. A result contained, for example, in \cite{DF} (see also \cite{BM}) allows us to get the almost everywhere convergence of $Du_\varepsilon$, then we can pass to the limit as $\varepsilon \rightarrow0$ in the equation satisfied by $u_\varepsilon$. In view of the uniqueness stated in Proposition \ref{prop}, item (\ref{(b)}), we have that $\bar u=u$, then \eqref{conve} is proved.

In order to prove the differentiation formula, we observe that, using the definition of $Q_p(\alpha,\Omega)$ and of $F_\alpha(w)$ given in \eqref{QQ} and \eqref{F}, respectively, it holds:
\begin{equation*}
F_{\alpha+\varepsilon}(u_\varepsilon)-F_{\alpha}(u_\varepsilon)\ge Q_p(\alpha+\varepsilon,\Omega)-Q_p(\alpha,\Omega)
\ge F_{\alpha+\varepsilon}(u)-F_{\alpha}(u)
\end{equation*}
that is,
\begin{equation}\label{FQa}
\varepsilon\int_\Omega |u_\varepsilon|^p\, \di \gamma_N\ge Q_p(\alpha+\varepsilon,\Omega)-Q_p(\alpha,\Omega)
\ge \varepsilon\int_\Omega |u|^p\, \di \gamma_N.
\end{equation}
Taking into account \eqref{conve}, inequalities \eqref{FQa} imply
$$\lim_{\varepsilon\rightarrow0} \frac{Q_p(\alpha+\varepsilon,\Omega)-Q_p(\alpha,\Omega)}\varepsilon=
\int_\Omega |u|^p\, \di \gamma_N
$$
and the claim is proved.
\vskip.2cm
\noindent \underline{Item (\ref{(ac)})}
\vskip.3cm
Since \eqref{Q} holds true, we prove that 
\begin{equation}\label{limQ}
\displaystyle\lim_{\alpha\rightarrow-\infty} \int_\Omega v(x)\, \di\gamma_N=0  \,.
\end{equation}
To this aim we choose $v$ as test function in \eqref{veq.0} and we get
\begin{equation*}
\frac 1\alpha \int_\Omega |D v|^p\, \di\gamma_N\,=
\int_\Omega |v(x)|^p\, \di\gamma_N
+\frac 1\alpha \int_\Omega v(x)\, \di\gamma_N\,.
\end{equation*}
Since $\alpha<0$, by using H\"older inequality, we get
\begin{align*}
\int_\Omega | v|^p\, \di\gamma_N &\le-\frac 1\alpha\int_\Omega v(x)\, \di\gamma_N\\
\\
\displaystyle\quad&\le -\frac 1\alpha \gamma_N (\Omega)^{1-\frac 1p}
\left ( \int_\Omega |v(x)|^p\, \di\gamma_N \right)^\frac{1}{p}.
\end{align*}
We deduce
\begin{equation*}
\int_\Omega | v|^p\, \di\gamma_N \le \frac 1{(-\alpha)^{\frac p{p-1}}} \gamma_N (\Omega)
\end{equation*}
and therefore
\begin{equation}\label{limQ2}
\lim_{\alpha \to -\infty}\int_\Omega | v|^p\, \di\gamma_N =0\,.
\end{equation}
On the other hand, by using H\"older inequality, we get
\begin{equation}\label{limQ3}
\left |\int_\Omega  v\, \di\gamma_N \right |
\le 
 \gamma_N (\Omega)^{1-\frac 1p}
\left (\int_\Omega | v|^p\, \di\gamma_N\right)^\frac 1p.
\end{equation}
Combining \eqref{limQ}-\eqref{limQ3} the assert follows. 
\vskip.2cm
\noindent \underline{Item (\ref{(ad)})}
\vskip.3cm

We observe that, in view of the monotonicity of $Q_p(\alpha, \Omega)$ stated  item (\ref{(ab)}), the limit
\[\lim_{\alpha\rightarrow\lambda_1(\Omega)^-}Q_p(\alpha, \Omega)
\]
exists, finite or not. 
We can use as test function $w=ku$, where $k$ is an arbitrary positive constant and $u$ is a positive eigenfunction of problem \eqref{eq.0}, obtaining
\begin{align*}
\displaystyle Q_p(\alpha, \Omega)&\ge-\int_\Omega|D(ku)|^p\, \di \gamma_N+\alpha\int_\Omega |ku|^p\, \di \gamma_N+p\int_\Omega ku\, \di \gamma_N=\\
\\
\displaystyle\quad&= \bigl(\alpha-\lambda_1(\Omega)\bigr)k^p\int_\Omega |u|^p\, \di \gamma_N+pk\int_\Omega u\, \di \gamma_N.
\end{align*}
Letting $\alpha\rightarrow\lambda_1(\Omega)^-$, we have
\[\lim_{\alpha\rightarrow\lambda_1(\Omega)^-}Q_p(\alpha, \Omega)\ge pk\int_\Omega u\, \di \gamma_N
\]
and from the arbitrariness of $k$ the claim follows.
\end{proof}

When $\Omega$ is a half-space, all the results stated in Proposition \ref{prop} hold true, but some further properties about the behavior of $Q(\alpha, \Omega)$ with respect to the choice of the half-space can be added. So, we put, for $t\in\R$,
\begin{equation*}
H_t= \{x = (x_1, x_2, \dots , x_N) \in\RR : x_1 < t\}
\end{equation*}
and we introduce the function of two variables
\begin{equation}\label{Qrad}
Q_p^\sharp(\alpha,t)=Q_p(\alpha,H_{t}), \qquad\quad \alpha,\ t\in\R.
\end{equation}

{Let us observe that definition \eqref{Qrad} makes sense also when $t=+\infty$, that is, when one considers $\Omega=\R^N$. In such a case, it is well known that $\lambda_1(\R^N)=0$, and it is immediate to observe that
\begin{equation}\label{Qspace}
Q_p(\alpha, \R^N)=(p-1)\left(-\frac1\alpha\right)^{\frac1{p-1}}\qquad\text{for }\alpha<0,
\end{equation}
where maximum is achieved when $w=\left(-\frac1\alpha\right)^{\frac1{p-1}}$. Furthermore,
\begin{equation}\label{Qspacec}
Q_p^\sharp(\alpha,t)\le Q_p(\alpha, \R^N)\qquad\text{for }\alpha<0,\>t\in\R^N.
\end{equation}
Let us explicitly observe that the $p$-torsional rigidity defined in \eqref{TT}, that is, the case $\alpha=0$, is infinite when $\Omega=\R^N$.
}

\noindent We now prove some results about the behavior of $Q_p^\sharp(\alpha,t)$ with respect to $t$. We observe that for $p=2$ such properties could be proven using the explicit solution to \eqref{veq.0} in a half-space (see, e.g., Section \ref{s5}).

Before stating the results, we observe that, when one considers the generalized torsional rigidity on $H_t$, the use of Proposition \ref{prop} (\ref{(d)}) implies that the generalized torsion function $v$, solution to \eqref{veq.0} on $H_t$, depends only on the variable $x_1$, so we have
\begin{equation}\label{1dimQ}
Q_p^\sharp(\alpha,t)
=\max_{w\in W_{0}^{1,p} \left( (-\infty,t),\phi_{1} \right)}\left\{-\int_{-\infty}^t|w'(x)|^p\, \di \gamma_1+\alpha\int_{-\infty}^t |w(x)|^p\, \di \gamma_1+p\int_{-\infty}^t w(x)\, \di \gamma_1\right\}.
\end{equation}
and the function $v$, which achieves the maximum above, 
solves the problem
\begin{equation} \label{2eqQ}
\left\{
\begin{array}
[c]{lll}
-\bigl(|v'|^{p-2}v'\phi_{1}(\sigma)\bigr)'=\alpha v^{p-1}\phi_{1}(\sigma)+\phi_{1}(\sigma)& & \sigma\in
(-\infty,t),\\
\\
v\in W^{1,p}_{0}\!
\left(
(-\infty,t),\phi_{1}\right). & & 
\end{array}
\right.
\end{equation}
\begin{proposition}\label{finite}
Let $Q_p^\sharp(\alpha,t)$ be the function defined in \eqref{Qrad}, we have:
\vskip.2cm
\noindent- for any fixed $\alpha<0$, $Q_p^\sharp(\alpha,t)$ is finite for every $t\in\R$ and
\begin{equation}\label{itlim}
\lim_{t\rightarrow+\infty}Q_p^\sharp(\alpha,t)=(p-1)\left(-\frac1\alpha\right)^{\frac1{p-1}};
\end{equation}
\noindent- for $\alpha=0$, $Q_p^\sharp(0,t)$ is finite for every $t\in\R$ and
\begin{equation}\label{itlim0}
\lim_{t\rightarrow+\infty}Q_p^\sharp(0,t)=+\infty;
\end{equation}
\noindent- for any fixed $\alpha>0$, $Q_p^\sharp(\alpha,t)$  is finite if and only if
\begin{equation}\label{t}
t<{\bar t}
\end{equation}
where $\bar t\in\R$ is such that $\lambda_1(H_{\bar t})=\alpha$. Furthermore
\begin{equation}\label{tlim}
\lim_{t\rightarrow\,{\bar t^-}}Q_p^\sharp(\alpha,t)=+\infty.
\end{equation}
\end{proposition}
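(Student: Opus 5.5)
The plan is to reduce every claim to Proposition \ref{prop}(\ref{(a)}), Proposition \ref{propa} and Proposition \ref{half}, together with the monotonicity of $Q_p^\sharp(\alpha,t)$ in $t$ given by Proposition \ref{prop}(\ref{(c)}), since $H_t\subset H_s$ for $t<s$.

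\emph{Finiteness.} By Proposition \ref{prop}(\ref{(a)}), $Q_p^\sharp(\alpha,t)$ is finite if and only if $\alpha<\lambda_1(H_t)$, and $\lambda_1(H_t)>0$ for every $t\in\R$ because $\gamma_N(H_t)<1$. This settles the cases $\alpha<0$ and $\alpha=0$. For $\alpha>0$ we need the range of $t\mapsto\lambda_1(H_t)$, which is continuous and decreasing by Proposition \ref{half}, since it is differentiable. We want its limits to be $+\infty$ as $t\to-\infty$ and $0$ as $t\to+\infty$.
\begin{itemize}
\item For $t\to+\infty$, take the one-dimensional test function $w=\min\{1,(t-x_1)_+\}$ in \eqref{1dim}. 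The Rayleigh quotient is at most $\gamma_1((t-1,t))/\gamma_1((-\infty,t-1))\to0$.
\item For $t\to-\infty$, use a Gaussian Hardy/Poincaré inequality on $(-\infty,t)$ with $t<0$. Write $\phi_1'=-\sigma\phi_1$ and $-\sigma\ge|t|$, and integrate $(|w|^p)'\phi_1$ by parts:
\[
|t|\int_{-\infty}^t|w|^p\phi_1\le\int_{-\infty}^t |w|^p(-\sigma)\phi_1\,\di\sigma = -\int_{-\infty}^t p|w|^{p-2}ww'\phi_1\,\di\sigma .
\]
Hölder's inequality then gives $\lambda_1(H_t)\ge(|t|/p)^p\to+\infty$.
\end{itemize}
Hence for each $\alpha>0$ there is a unique $\bar t$ with $\lambda_1(H_{\bar t})=\alpha$, and $\alpha<\lambda_1(H_t)$ holds if and only if $t<\bar t$.

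\emph{Limit \eqref{itlim}.} By \eqref{Qspacec} and monotonicity in $t$, the limit exists and is at most $(p-1)(-1/\alpha)^{1/(p-1)}$. For the reverse inequality, set $c=(-1/\alpha)^{1/(p-1)}$ and test \eqref{1dimQ} with $w_t=c\min\{1,(t-x_1)_+\}$. Then $F_\alpha(w_t)$ converges to $\alpha c^p+pc=(p-1)c$, because the gradient term is $c^p\gamma_1((t-1,t))\to0$ and the other integrals converge by dominated convergence. Therefore $\liminf_{t\to+\infty}Q_p^\sharp(\alpha,t)\ge(p-1)c$.

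\emph{Limit \eqref{itlim0}.} Here $F_0(kw_t)$ with $w_t=\min\{1,(t-x_1)_+\}$ equals $-k^p\gamma_1((t-1,t))+pk\int w_t\,\di\gamma_1$. Fix $k$ large and let $t\to\infty$: the limit is $pk$. Since $k$ is arbitrary, $Q_p^\sharp(0,t)\to+\infty$.

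\emph{Limit \eqref{tlim}.} This is the delicate step, because both the domain and $\lambda_1$ vary with $t$. Let $u$ be the normalized positive eigenfunction on $H_{\bar t}$ and $t_n\uparrow\bar t$. I would first try the translated functions $u_n(x)=u(x_1+\bar t-t_n)\in W_0^{1,p}(H_{t_n},\phi_N)$, which converge to $u$ in $W^{1,p}(\phi_N)$ because the Gaussian weight shifts continuously; alternatively, cutoffs of $u$ near the boundary would serve. This gives
\[
\limsup_n \frac{\int|u_n'|^p\,\di\gamma_1}{\int|u_n|^p\,\di\gamma_1}\le\alpha \quad\text{and}\quad \int u_n\,\di\gamma_1\to\int u\,\di\gamma_1>0.
\]
The test functions $ku_n$ then give
\[
Q_p^\sharp(\alpha,t_n)\ge k^p\Bigl(\alpha\!\int|u_n|^p-\!\int|u_n'|^p\Bigr)+pk\!\int u_n .
\]
Together with the finite-$t$ bound this yields a liminf of at least $pk\int u\,\di\gamma_1$ for every $k$, hence $+\infty$, after checking that the negative bracket tends to $0$. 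The main obstacle is ensuring the bracket's error $o(1)k^p$ does not spoil this. That is handled by fixing $k$ first and then letting $n\to\infty$, exactly as in the proof of Proposition \ref{propa}(\ref{(ad)}), using the continuity of the Rayleigh quotient along the approximants.
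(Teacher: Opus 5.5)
Your proof is correct. For finiteness, \eqref{itlim} and \eqref{itlim0} it follows the paper's argument: Proposition \ref{prop}(\ref{(a)}), the same piecewise-linear test functions, and \eqref{Qspacec} for the upper bound. The differences are all in the case $\alpha>0$.

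For the existence of $\bar t$, the paper only asserts continuity of $t\mapsto\lambda_1(H_t)$, adapting an external argument, and appeals to the behaviour as $t\to-\infty$. You instead prove both endpoint limits. The explicit bound $\lambda_1(H_t)\ge(|t|/p)^p$ comes from $\phi_1'=-\sigma\phi_1$, and the cutoff gives $\lambda_1(H_t)\to0$, a limit needed for every $\alpha>0$ that the paper does not spell out. Two small points here. First, justify that the decrease is strict, since the ``iff $t<\bar t$'' depends on it; this follows from \eqref{shape}, because $|u'(t)|^{p-1}\phi_1(t)=\lambda_1(H_t)\int_{-\infty}^t u^{p-1}\phi_1\,\di\sigma>0$. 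Second, the appendix proof of Proposition \ref{half} itself presupposes continuity of $\lambda_1(H_t)$. Citing it is legitimate within the paper, but it is not an independent proof of continuity.

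For \eqref{tlim} your route genuinely differs. The paper uses the normalized eigenfunctions $u_h$ of $H_{\bar t-h}$. It must prove $u_h\to\bar u$ (by compactness, passage to the limit in the integrated ODE, and simplicity of $\lambda_1(H_{\bar t})$). It also needs continuity of $\lambda_1$ to make $(\alpha-\lambda_1(H_{\bar t-h}))k^p$ vanish. You translate the single eigenfunction $u$ of $H_{\bar t}$. Since $\phi_1(y-\delta)=\phi_1(y)e^{y\delta-\delta^2/2}\le e^{\max\{\bar t,0\}\delta}\phi_1(y)$ on $(-\infty,\bar t)$, dominated convergence gives directly that $\int|u_n'|^p\,\di\gamma_1\to\alpha$, $\int|u_n|^p\,\di\gamma_1\to1$ and $\int u_n\,\di\gamma_1\to\int u\,\di\gamma_1$. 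You should state this bound explicitly rather than the vague ``the Gaussian weight shifts continuously''.

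Your route is shorter and needs neither convergence of eigenfunctions nor continuity of $\lambda_1$ at this step. The paper's route yields stability of solutions on $H_{\bar t-h}$ as $h\to0^+$, which it reuses in the appendix proof of Proposition \ref{derQ}.
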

{
\begin{proof} Let us observe that, for all $\alpha$, $Q_p^\sharp(\alpha,t)$ is finite for the values of $t$ considered in the statement of the proposition, in view of Proposition \ref{prop} (\ref{(a)}).

Let us fix $\alpha<0$ and, for every $t\in\R^N$,  let us consider the following function defined on $H_t$
\begin{equation*}
w_t(x)=\left\{
\begin{array}
[c]{lll}%
\left(-\dfrac1\alpha\right)^{\frac1{p-1}}& & \text{if }x_1<t-1%
\\
\\
\left(-\dfrac1\alpha\right)^{\frac1{p-1}}(t-x_1)& & \text{if }t-1\le x_1<t%
\\
\\
0& & \text{if } x_1\ge t.
\end{array}
\right. %
\end{equation*}
If $F_\alpha(w)$ is defined as in \eqref{F} with $\Omega=H_t$,  in view of \eqref{Qspacec} and of the definition of $Q_p^\sharp(\alpha,t)$, we have:
\begin{equation}\label{wt0}
F_\alpha(w_t)\le Q_p^\sharp(\alpha,t)\le Q_p(\alpha,\R^N).
\end{equation}
A direct computation gives
$$F_\alpha(w_t)=-\left(-\frac1\alpha\right)^{\frac p{p-1}}\gamma _{N}({H_t\backslash H_{t-1}})+(p-1) \left(-\frac1\alpha\right)^{\frac 1{p-1}}\gamma _{N}({H_{t-1}})+\int_{H_t\backslash H_{t-1}}(\alpha w^p_t+pw_t)\,\di \gamma _{N}
$$
and it follows
\begin{equation*}
\lim_{t\rightarrow+\infty}F_\alpha(w_t)=(p-1)\left(-\frac1\alpha\right)^{\frac1{p-1}}.
\end{equation*}
Taking into account \eqref{Qspace} and \eqref{wt0}, we have \eqref{itlim}.

When $\alpha=0$ one can argue in a similar way. Consider, for $k>0$, the test function
\begin{equation*}
w_{t,k}(x)=\left\{
\begin{array}
[c]{lll}%
k& & \text{if }x_1<t-1%
\\
\\
k(t-x_1)& & \text{if }t-1\le x_1<t%
\\
\\
0& & \text{if } x_1\ge t.
\end{array}
\right. %
\end{equation*}
A direct computation gives
$$F_0(w_{t,k})\ge-k^{p}\gamma _{N}({H_t\backslash H_{t-1}})+(p-1) k\gamma _{N}({H_{t-1}})
$$
and, for every fixed $k>0$, it follows
\begin{equation*}
\liminf_{t\rightarrow+\infty}F_0(w_{t,k})\ge(p-1)k.
\end{equation*}
Being $k$ arbitrary, taking into account the inequality
\begin{equation*}
F_0(w_{t,k})\le Q_p^\sharp(0,t),
\end{equation*}
\eqref{itlim0}  follows. 

When $\alpha>0$, we observe that, adapting the arguments used in \cite{GMS} one can prove that $\lambda_1(H_{t})$ is continuous with respect to $t$, so, using the behaviour as $t\rightarrow-\infty$, one can say that for every $\alpha>0$ there exists $\bar t\in\R$ such that $\lambda_1(H_{\bar t})=\alpha$.

For $0<h<1$, we denote by $u_h$
the positive eigenfunction in $H_{\bar t-h}$ such that $\|u_h\|_{L^p(H_{\bar t-h},\phi_N)}=1$ and by $\bar u$
 the positive eigenfunction in $H_{\bar t}$ such that $\|\bar u\|_{L^p(H_{\bar t},\phi_N)}=1$.
 Using the equation satisfied by $u_h$ we have that $\|u_h\|_{W_0^{1,p}(H_{\bar t-h},\phi_N)}$ is bounded. Letting $u_h(x)$ be extended to 0 for $x_1>\bar t-h$, we obtain that 
there exists a subsequence $u_{h'}$ which weakly converges in $W_0^{1,p}(H_{\bar t},\phi_N)$ to a function $v\in W_0^{1,p}(H_{\bar t},\phi_N)$. It follows that there exists a further subsequence, still denoted by $u_{h'}$ which converges to $v$ strongly in $L^{p}(H_{\bar t},\phi_N)$ and almost everywhere. On the other hand, we have already observed that $u_h$ depends only on the first variable and it solves problem \eqref{2eq}, that is
$$-|u_{h'}'(s)|^{p-2}u_{h'}'(s)\phi_{1}(s)=\lambda_1(H_{\bar t-{h'}})\int_{-\infty}^su_{h'}^{p-1}\phi_{1}(\sigma)\,\di \sigma,\qquad s\in
(-\infty,\bar t-h).
$$
It is possible to pass to the limit in the above equality obtaining that $u_{h'}'$ converges almost everywhere in $H_{\bar t}$, and then strongly in ${L^q(H_{\bar t},\phi_N)}$, $1\le q<p$, to $v'$. Then $v$ satisfies \eqref{2eq} in the interval $(-\infty, \bar t)$ and, in view of the simplicity of the first eigenvalue, we have $v=\bar u$. We finally observe that, being the limit unique, all the sequence $u_h$ converges to $\bar u$ in the ways we have outlined above.

In order to evaluate the limit in \eqref{tlim} we use $w=k  u_h$ as a test function, where $k$ is an arbitrary positive constant, to obtain
\begin{align*}
\displaystyle Q_p^\sharp(\alpha, \bar t-h)&\ge-\int_{H_{\bar t-h}} |Dk  u_h|^p\, \di \gamma_N+\alpha\int_{H_{\bar t-h}} |k u_h|^p\, \di \gamma_N+p\int_{H_{\bar t-h}}  k u_h\, \di \gamma_N\\
\\
\displaystyle\quad&= \bigl(\alpha-\lambda_1(H_{\bar t-h})\bigr)k^p\int_{H_{\bar t-h}}  |u_h|^p\, \di \gamma_N+pk\int_{H_{\bar t-h}}  u_h\, \di \gamma_N
\end{align*}
Letting $h\rightarrow0^+$, we have
\[
\lim_{h\rightarrow0^+}Q_p^\sharp(\alpha, \bar t-h)\ge pk\int_{H_{\bar t}}\bar u\, \di \gamma_N
\]
and from the arbitrariness of $k$ the claim follows.

\end{proof}
}
\begin{proposition}\label{derQ}
For any fixed $\alpha$, the function $Q_p^\sharp(\alpha,t)$ defined in \eqref{Qrad} is strictly increasing with respect to $t$ and the following differentiation formula holds true
\begin{equation}\label{shapeQ}
\frac\partial{\partial t}Q_p^\sharp(\alpha,t)=(p-1)|v'(t)|^p\phi_1(t).
\end{equation}
where $v$ is the solution to \eqref{2eqQ}.
\end{proposition}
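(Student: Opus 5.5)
We work on the one-dimensional reduction \eqref{1dimQ}–\eqref{2eqQ}. Fix $\alpha$ and $t$ such that $Q_p^\sharp(\alpha,t)$ is finite (for $\alpha>0$ this means $t<\bar t$, by Proposition \ref{finite}). Let $v=v_t$ solve \eqref{2eqQ} on $(-\infty,t)$. We will prove the formula \eqref{shapeQ} through a two-sided comparison. The comparison functions are built from $v_t$ and $v_{t+h}$ by an affine change of variables near the endpoint, in the same spirit as the proof of \eqref{shape} for the eigenvalue.

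\textbf{Step 1: monotonicity.} The monotonicity is immediate from Proposition \ref{prop}(\ref{(c)}). Strict monotonicity will follow from the derivative formula, once we know that $v'(t)\neq0$. Integrating \eqref{2eqQ} from $-\infty$ to $s$ gives
\[
-|v'(s)|^{p-2}v'(s)\phi_1(s)=\int_{-\infty}^s(\alpha v^{p-1}+1)\phi_1\,\di\sigma .
\]
The boundary term at $-\infty$ vanishes because $v\in W_0^{1,p}$ with Gaussian weight. At $s=t$ the right-hand side equals $\int_{-\infty}^t(\alpha v^{p-1}+1)\phi_1\,\di\sigma$, and this is positive. To see this, test the equation with the constant... that is not admissible, so instead note that this quantity is the flux $-|v'(t)|^{p-2}v'(t)\phi_1(t)$. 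Since $v\ge0$ and $v(t)=0$, we have $v'(t)\le0$. If $v'(t)=0$, the ODE together with uniqueness for the Cauchy problem at $t$ would force a contradiction with $\alpha v^{p-1}+1>0$ near $t$, where $v$ is small. Hence $v'(t)<0$.

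\textbf{Step 2: lower bound for the increment.} Take $h>0$ and define
\[
w(\sigma)=v_t\!\left(\frac{t}{t+h}\,\sigma\right)
\]
on $(-\infty,t+h)$, or more conveniently the shifted function $w(\sigma)=v_t(\sigma-h)$. Then $F_\alpha(w)\le Q_p^\sharp(\alpha,t+h)$. With $w(\sigma)=v_t(\sigma-h)$ the change of variables moves only the weight:
\[
F_\alpha(w)=\int_{-\infty}^t\bigl(-|v'|^p+\alpha v^p+pv\bigr)\phi_1(\sigma+h)\,\di\sigma .
\]
Since $\phi_1(\sigma+h)=\phi_1(\sigma)\bigl(1-h\sigma+o(h)\bigr)$, we get
\[
\frac{Q_p^\sharp(\alpha,t+h)-Q_p^\sharp(\alpha,t)}{h}\ge-\int_{-\infty}^t\sigma\bigl(-|v'|^p+\alpha v^p+pv\bigr)\phi_1\,\di\sigma+o(1).
\]
We then identify this integral with $(p-1)|v'(t)|^p\phi_1(t)$ through a Pohozaev-type identity. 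Multiply \eqref{2eqQ} by $v'$ and integrate. The identity $\phi_1'=-\sigma\phi_1$ produces exactly the $\sigma$-weighted terms. The boundary term at $t$ contributes
\[
\Bigl(|v'|^p-\tfrac{1}{p}|v'|^p\Bigr)\phi_1\Big|_t\cdot p=(p-1)|v'(t)|^p\phi_1(t),
\]
while the boundary term at $-\infty$ vanishes. This decay has to be justified using the Gaussian weight and the boundedness of $v$; boundedness holds because $v$ is bounded by the $\R^N$ constant for $\alpha<0$, and by a comparison argument otherwise.

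\textbf{Step 3: upper bound.} Symmetrically, $w(\sigma)=v_{t+h}(\sigma+h)$ is admissible on $(-\infty,t)$. This gives the reverse inequality, with $v_{t+h}$ in place of $v_t$. We therefore need $v_{t+h}\to v_t$ strongly in $W^{1,p}$ with weight, and $v_{t+h}'(t+h)\to v'(t)$. Convergence is obtained as in the proof of Proposition \ref{finite}: uniform bounds, compactness, uniqueness from Proposition \ref{prop}(\ref{(b)}), and the integrated ODE formula for $|v'|^{p-2}v'\phi_1$, which gives pointwise convergence of the derivatives including the endpoint value. For negative $h$ we argue the same way.

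\textbf{Main obstacle.} The main obstacle is the rigorous Pohozaev identity in Step 2, namely the vanishing of the boundary terms at $-\infty$ with the weight $\sigma\phi_1$, together with the uniform convergence of $v_{t+h}'$ up to the moving endpoint in Step 3. For the former I would first prove bounds on $v$ and on $\sigma|v'|^p\phi_1$ as $\sigma\to-\infty$ directly from the integrated ODE, for which the flux $\int_{-\infty}^s(\alpha v^{p-1}+1)\phi_1\,\di\sigma$ behaves like $\phi_1(s)/|s|$.
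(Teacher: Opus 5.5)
Your architecture matches the paper's. You move the optimizer of one half-line onto the other by a translation, pass to the limit using the convergence $v_{t+h}\to v_t$ obtained as in Proposition \ref{finite}, and turn the limit into the boundary term via $\phi_1'=-\sigma\phi_1$ and \eqref{2eqQ}. Your pure translation even lands directly on the paper's intermediate expression $\int_{-\infty}^t\sigma\bigl(|v'|^p-\alpha v^p-pv\bigr)\phi_1\,\di\sigma$.

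\textbf{The gap: the forward translate is not known to be admissible.} A pure translation onto a larger half-line need not lie in the right space. For $h>0$ and $w(\sigma)=v_t(\sigma-h)$ on $(-\infty,t+h)$,
\[
\int_{-\infty}^{t+h}|w'(\sigma)|^p\phi_1(\sigma)\,\di\sigma=e^{-h^2/2}\int_{-\infty}^{t}|v'(\tau)|^p\,e^{-h\tau}\,\phi_1(\tau)\,\di\tau ,
\]
and $e^{-h\tau}\to+\infty$ as $\tau\to-\infty$. So membership of $w$ in $W_0^{1,p}((-\infty,t+h),\phi_1)$ requires exponentially weighted integrability of $|v'|^p$ and $v^p$. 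The same holds for the domination by $(1+|\tau|)e^{h_0|\tau|}$ that you need to pass to the limit. Neither follows from $v\in W_0^{1,p}(\phi_1)$. The problem affects every lower bound of a one-sided difference quotient; for $h<0$ it is your Step 3 function.

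Your proposed source for this decay does not hold:
\begin{itemize}
\item $v$ is in general unbounded for $\alpha\ge0$. For $p=2$, $\alpha=0$ one has $v'=-k/\phi_1\sim-1/|x|$, so $v\sim\log|x|$.
\item For $\alpha>0$ the flux is not $O(\phi_1(s)/|s|)$. Since $v$ is decreasing, the flux is at least $(\alpha v(s)^{p-1}+1)k(s)$, and $v$ is unbounded when $p\ge2$.
\end{itemize}
Your plan does work for $\alpha\le0$, where the flux is at most $k(s)$. For $\alpha>0$, which is the range needed in Theorem \ref{final}, the step is unsupported. Bootstrapping the integrated ODE with $v\in L^p(\phi_1)$ does not rescue it, since it only gives bounds of the form $e^{s^2/(2p)}$ times powers of $|s|$, and these lose against $e^{h|s|}$.

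\textbf{Two ways to close it.} The paper modulates the translate: $w_2(x)=v(x-h)e^{(2hx-h^2)/(2p)}$ and $w_1(x)=v_h(x+h)e^{-(2hx+h^2)/(2p)}$. Since $\phi_1(x-h)=e^{hx-h^2/2}\phi_1(x)$, this exactly compensates the change of Gaussian density:
\begin{itemize}
\item the $L^p$ term is unchanged;
\item the gradient term becomes $\int_{-\infty}^t|v'+\frac hp v|^p\phi_1$;
\item only the linear term carries the weight $e^{-(p-1)h\tau/p}$, which is harmless by H\"older because $v\in L^p(\phi_1)$.
\end{itemize}
The price is that the limit appears as $-\int|v'|^{p-2}v'v\phi_1-(p-1)\int xv\phi_1$. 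One extra integration by parts then brings it to your expression.

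Alternatively, keep the pure translation and prove an Agmon-type bound $\int_{-\infty}^t(|v'|^p+v^p)e^{c|\sigma|}\phi_1\,\di\sigma\le C$ for some $c>0$, uniformly for $v_{t+h}$ with $|h|$ small. To do this, test \eqref{2eqQ} with $v\eta^p$, where $\eta=e^{c\min\{|\sigma|,R\}/p}$, and absorb using the Poincar\'e inequality for $v\eta$. This closes precisely because $\alpha<\lambda_1(H_t)$.

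\textbf{Minor point on Step 1.} You do not need uniqueness for the Cauchy problem, which is delicate for $p$-Laplacian ODEs. If $v'(t)=0$, then $-|v'(s)|^{p-2}v'(s)\phi_1(s)=-\int_s^t(\alpha v^{p-1}+1)\phi_1<0$ for $s$ near $t$. Hence $v'>0$ and so $v<0$ there, a contradiction.
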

Let us observe that, as for Proposition \ref{half}, formula \eqref{shapeQ} can be seen as a shape derivative of $Q_p(\alpha,\Omega)$ on half-spaces. Its proof uses arguments similar to those used to prove Proposition \ref{half}, so it can be found in
Section \ref{s6}.

An immediate consequence of the above results is the following proposition.
\begin{proposition}\label{prad}
For any $\Omega\subset \R^N$, with $\gamma_N (\Omega)<1$, and for all $-\infty<\alpha<\lambda_1(\Omega)$, there exists a unique $t(\alpha)\in\R$, with $H_{t(\alpha)}\subset\Omega^\sharp$, such that
\begin{equation}\label{cnd}
Q_p(\alpha,\Omega)= Q_p(\alpha,H_{t(\alpha)}).
\end{equation}
Furthermore, if $\alpha>0$, we have $t(\alpha)<\bar t$, where $\bar t$ is such that $\lambda_1(H_{\bar t})=\alpha$.
\end{proposition}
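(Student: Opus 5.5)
Fix $\alpha<\lambda_1(\Omega)$ and let $\tau$ be such that $\Omega^\sharp=H_\tau$. The plan is an intermediate value argument for the continuous, strictly increasing map $t\mapsto Q_p^\sharp(\alpha,t)$ on a suitable interval $I_\alpha\subset(-\infty,\tau]$. Strict monotonicity and differentiability (hence continuity) in $t$ come from Proposition \ref{derQ}. Uniqueness of $t(\alpha)$ then follows at once from strict monotonicity, so the work is in existence and in locating $t(\alpha)$.

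\emph{Upper bound.} By Theorem \ref{KR}, $\lambda_1(\Omega^\sharp)\le\lambda_1(\Omega)$. I would first handle the case $\alpha<\lambda_1(\Omega^\sharp)$. Then Proposition \ref{prop} (\ref{(d)}) gives $Q_p(\alpha,\Omega)\le Q_p^\sharp(\alpha,\tau)$. If $\lambda_1(\Omega^\sharp)\le\alpha<\lambda_1(\Omega)$, there is $\bar t\le\tau$ with $\lambda_1(H_{\bar t})=\alpha$, where $\bar t$ is defined for $\alpha>0$ as in Proposition \ref{finite}. By \eqref{tlim}, $Q_p^\sharp(\alpha,t)\to+\infty$ as $t\to\bar t^-$, so some $t_1<\bar t$ satisfies $Q_p^\sharp(\alpha,t_1)>Q_p(\alpha,\Omega)$; the latter is finite by Proposition \ref{prop} (\ref{(a)}). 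In both cases we obtain a point $t_1\le\tau$ at which $Q_p^\sharp$ is at least $Q_p(\alpha,\Omega)$.

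\emph{Lower bound.} We need $Q_p^\sharp(\alpha,t)\to0$ as $t\to-\infty$. The cleanest route is monotone comparison in $\alpha$. For $t$ very negative, $\lambda_1(H_t)$ is large, since $\lambda_1(H_t)\to+\infty$ as $t\to-\infty$; this is the behaviour invoked in Proposition \ref{finite}. So $\alpha$ is admissible, and the estimate from the proof of Proposition \ref{prop} (\ref{(a)}) applies. Concretely, writing $\lambda=\lambda_1(H_t)$ and $m=\gamma_N(H_t)\to0$, one has $F_\alpha(w)\le(\alpha-\lambda)\|w\|_p^p+p\,m^{1-1/p}\|w\|_p$. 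Maximizing over $\|w\|_p$ gives $Q_p^\sharp(\alpha,t)\le C\,m\,(\lambda-\alpha)^{-1/(p-1)}\to0$. Since $Q_p(\alpha,\Omega)>0$ (test with a small multiple of a positive function), there is $t_0<t_1$ with $Q_p^\sharp(\alpha,t_0)<Q_p(\alpha,\Omega)$. Continuity on $[t_0,t_1]$ then yields $t(\alpha)\in(t_0,t_1]$ satisfying \eqref{cnd}.

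\emph{Location of $t(\alpha)$.} In the first case $t(\alpha)\le\tau$, so $H_{t(\alpha)}\subset\Omega^\sharp$. In the second case $t(\alpha)\le t_1<\bar t\le\tau$. For $\alpha>0$, $Q_p^\sharp(\alpha,t)$ is finite only for $t<\bar t$ by \eqref{t}, so necessarily $t(\alpha)<\bar t$. The main obstacle I expect is the lower limit as $t\to-\infty$ together with continuity of $Q_p^\sharp$ up to the endpoints of the admissible $t$-range. I would handle the former by the explicit Young/Hölder bound above, relying on $\lambda_1(H_t)\to+\infty$; the latter comes from Proposition \ref{derQ}.
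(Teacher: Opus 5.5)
Your argument is correct, but it is organized differently from the paper's.

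\textbf{The paper's route.} The paper gets existence from the behaviour of $t\mapsto Q_p^\sharp(\alpha,t)$ at the right end of its domain:
\begin{itemize}
\item \eqref{tlim} for $\alpha>0$;
\item \eqref{itlim0} for $\alpha=0$;
\item for $\alpha<0$, \eqref{itlim} combined with $Q_p(\alpha,\Omega)<Q_p(\alpha,\R^N)$.
\end{itemize}
It then asserts that $Q_p^\sharp(\alpha,\cdot)$ sweeps out all of $(0,+\infty)$, respectively $(0,Q_p(\alpha,\R^N))$. This tacitly takes $0$ as the limit when $t\to-\infty$. Finally, the paper attributes the inclusion $H_{t(\alpha)}\subset\Omega^\sharp$ to the finiteness of $Q_p(\alpha,H_{t(\alpha)})$.

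\textbf{Your route.} You use Proposition \ref{prop} (\ref{(d)}) to get the upper bound directly at $t=\tau$ whenever $\alpha<\lambda_1(\Omega^\sharp)$. You fall back on \eqref{tlim} only in the residual range $\lambda_1(\Omega^\sharp)\le\alpha<\lambda_1(\Omega)$. There $\alpha>0$ automatically, because $\lambda_1(\Omega^\sharp)>0$; you should say this explicitly, since it is what makes $\bar t$ available.

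\textbf{What your route buys.}
\begin{itemize}
\item You never need the asymptotics \eqref{itlim}, \eqref{itlim0}, nor the comparison with $\R^N$.
\item The inclusion $H_{t(\alpha)}\subset\Omega^\sharp$ comes out rigorously in every case. Finiteness alone only gives $t(\alpha)<\bar t$. That implies $t(\alpha)\le\tau$ only when $\alpha\ge\lambda_1(\Omega^\sharp)$, and it gives no constraint at all when $\alpha\le0$.
\item You make the lower limit explicit through
\[
Q_p^\sharp(\alpha,t)\le(p-1)\,\gamma_N(H_t)\bigl(\lambda_1(H_t)-\alpha\bigr)^{-1/(p-1)}.
\]
Here you do not even need $\lambda_1(H_t)\to+\infty$. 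Monotonicity already gives $\lambda_1(H_t)-\alpha\ge\lambda_1(H_{t_1})-\alpha>0$ for $t\le t_1$, and $\gamma_N(H_t)\to0$ does the rest.
\end{itemize}

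\textbf{What the paper's route buys.} It describes the full range of $Q_p^\sharp(\alpha,\cdot)$ independently of $\Omega$: every admissible value is realized by some half-space. Your argument is tailored to the single value $Q_p(\alpha,\Omega)$.
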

\begin{proof}
In view of Proposition \ref{derQ} and \eqref{tlim}, for a fixed $\alpha >0$, the strictly increasing function with respect to the variable $t$, $Q_p^\sharp(\alpha,t)$ maps $(-\infty,\bar t)$ into $(0,+\infty)$, so there exists a unique $t(\alpha)<\bar t$ such that \eqref{cnd} holds.
When $\alpha=0$, we use \eqref{itlim0} and we repeat the same argument. Obviously, in this case we have only $t(0)<+\infty$.

When $\alpha<0$, we firstly observe that, in view of Proposition \ref{prop} (\ref{(c)}) and \eqref{Qspace}, it holds
\begin{equation*}
Q_p(\alpha, \Omega)<Q_p(\alpha, \R^N)=(p-1)\left(-\frac1\alpha\right)^{\frac1{p-1}}.
\end{equation*}
On the other hand, \eqref{itlim} implies that the strictly increasing function $Q_p^\sharp(0,t)$ maps $\R$ into $(0,Q_p(\alpha, \R^N))$, so there exists a unique $t(\alpha)\in\R$, with $H_{t(\alpha)}\subset\Omega^\sharp$, such that \eqref{cnd} holds.

We finally observe that the inclusion $H_{t(\alpha)}\subset\Omega^\sharp$ follows from the fact that $Q_p(\alpha,H_{t(\alpha)})$ is finite.
\end{proof}
\section{Comparison results}\label{s3}

In this section we show that one can use standard symmetrization arguments which go back to Talenti results \cite{T} (see, for example \cite{BBMP} in the gaussian context) in order to prove a comparison result which allows us to estimate the generalized torsion function introduced in Section \ref{s2}. Let us observe that, in the unweighted case, similar results have been obtained, for example, in \cite{C1}, \cite{C2} when $p=2$, or in \cite{AFT} when $p>1$.

\begin{theorem}\label{comp}
For a fixed $\alpha\in(-\infty,\lambda_1(\Omega))$, let $v$ be the solution to problem \eqref{veq.0} and let $t\in\R$ be such that
$Q_p(\alpha,\Omega)= Q_p^\sharp(\alpha,t)$. If $\bar v$ is the solution to problem
\begin{equation} \label{veq}
\left\{
\begin{array}
[c]{lll}%
-\dive(\phi_N (x)|D\bar v|^{p-2}D\bar v)=\alpha \phi_N (x)|\bar v|^{p-2}\bar v+\phi_N (x)& & \text{in }%
H_t,\\
\\
\bar v=0 & & \text{on }\partial H_t,
\end{array}
\right. %
\end{equation}
then the following assertions hold:
\begin{itemize}
\item if $\alpha\ge0$, then, for every $1\le m<\infty$ and for every $r\le t$,
\begin{equation}\label{qq}
\int_{H_r}\bigl(v^\sharp(x)\bigr)^m\, \di \gamma_N \le \int_{H_r} \bigl(\bar v(x)\bigr)^m\, \di \gamma_N;
\end{equation}
\item if $\alpha<0$, then \eqref{qq} holds for every $\max\{1,p-1\}\le m<\infty$ and for every $r\le t$.
\end{itemize}
\end{theorem}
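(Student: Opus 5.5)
The plan is the classical Talenti-type argument, carried out on the one-dimensional rearrangement $v^*(s)$, $s\in(0,\gamma_N(\Omega))$. Set $k(r)=\gamma_N(H_r)$, so that the claim becomes a mass comparison on $(0,k(t))$.

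\emph{Step 1: differential inequality for $v^*$.} Test \eqref{veq.0} with $(v-\tau)_+$ for $\tau>0$ and differentiate in $\tau$. Combine this with the Gaussian isoperimetric inequality, $P_{\gamma}(\{v>\tau\})\ge \phi_1(k^{-1}(\mu(\tau)))$, and the usual coarea and H\"older argument. This gives, for a.e. $s$,
\begin{equation*}
-\frac{\di v^*}{\di s}(s)\le \frac{1}{\phi_1(k^{-1}(s))^{p'}}\Bigl(\int_0^s\bigl(\alpha (v^*)^{p-1}(\sigma)+1\bigr)\,\di\sigma\Bigr)^{\frac1{p-1}}.
\end{equation*}
Since $\bar v$ depends only on $x_1$ and is monotone, equation \eqref{2eqQ} shows that $\bar v^*$ satisfies the same relation with equality on $(0,k(t))$. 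In addition, $v^*(\gamma_N(\Omega))=0$ and $\bar v^*(k(t))=0$.

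\emph{Step 2: comparison on $(0,k(t))$.} When $\alpha\le 0$ the right-hand side does not obviously preserve order, so I would work with the masses instead. For $\alpha\ge0$, set $V(s)=\int_0^s (v^*)^{p-1}$ and $\bar V(s)=\int_0^s(\bar v^*)^{p-1}$. The goal is $V\le\bar V$ on $[0,k(t)]$, and from there the $m$-norms via Proposition \ref{relazione}. The device for fixing the boundary data is the normalization $Q_p(\alpha,\Omega)=Q_p^\sharp(\alpha,t)$, which by \eqref{Q} means $\int_0^{\gamma_N(\Omega)}v^*=\int_0^{k(t)}\bar v^*$. Note that $\gamma_N(\Omega)\ge k(t)$ by Proposition \ref{prad}.

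I would argue by contradiction. Suppose $v^*-\bar v^*$ (or the difference of masses) has a positive maximum at some point; integrating the differential inequality from that point gives a contradiction. The main alternative is a crossing configuration: $v^*$ is larger near $0$ but vanishes later. The total mass equality, together with $v^*\ge0$ on $(k(t),\gamma_N(\Omega))$, forces $\int_0^{k(t)} v^*\le \int_0^{k(t)}\bar v^*$. Combining this with the maximum principle argument in the ODE yields
\begin{equation*}
\int_0^{s} v^*\le\int_0^{s}\bar v^*,\qquad s\le k(t).
\end{equation*}
I would organize this as in \cite{AFT} or \cite{C2}: compare the solution of the ODE system for $(v^*,\text{mass})$ with the explicit one, and use the monotone dependence on the initial value $v^*(0)$ versus $\bar v^*(0)$ (a shooting argument). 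Then Proposition \ref{relazione} applied to the truncations $v^*\chi_{(0,r)}$ and $\bar v^*\chi_{(0,r)}$, with $F(\tau)=\tau^m$ after Lipschitz truncation and $m\ge1$, gives \eqref{qq}.

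\emph{Step 3: the case $\alpha<0$.} The source $\alpha (v^*)^{p-1}+1$ is now decreasing in $v^*$. I would compare the quantities $\int_0^s (v^*)^{p-1}$ and show that the ordering of the $(p-1)$-masses is preserved. Proposition \ref{relazione} applied to $f=(v^*)^{p-1}$ then allows only convex functions of $(v^*)^{p-1}$, i.e. powers $m\ge p-1$; together with the mass comparison for $m=1$ this explains the restriction $m\ge\max\{1,p-1\}$.

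\emph{Main obstacle.} The hardest step is Step 2 (and Step 3): the comparison is not a pointwise one, since the domains differ and only the total mass is matched. I expect the delicate part to be the shooting/contradiction argument, which has to rule out the case where $\bar v^*$ overtakes in mass before $k(t)$ and then loses it. I would handle this by writing the ODE in terms of the mass functions and using the strict monotonicity in the initial datum.
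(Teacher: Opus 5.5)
\textbf{There is a genuine gap in the core comparison step.} Your Step 1, the use of \eqref{Q} to get $\int_0^{\gamma_N(\Omega)}v^*=\int_0^{k(t)}\bar v^*$ with $k(t)\le\gamma_N(\Omega)$, and the final appeal to Proposition \ref{relazione} all agree with the paper. The comparison itself is only gestured at, and for $\alpha>0$ the mechanisms you propose do not close.

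\emph{Why your mechanisms fail for $\alpha>0$.} Where $v^*>\bar v^*$ and $\int_0^s(v^*)^{p-1}\ge\int_0^s(\bar v^*)^{p-1}$, the inequality allows $v^*$ to decay \emph{faster} than $\bar v^*$. So integrating it from a maximum of $v^*-\bar v^*$ gives no contradiction. For the same reason, monotone dependence on the initial value for exact solutions does not transfer to a function that only satisfies the differential inequality. Rewriting in terms of mass functions does not help for $p\neq2$ either: the source involves $\int_0^s(v^*)^{p-1}$, while the normalization involves $\int_0^s v^*$. In particular, for $p>2$, $V\le\bar V$ alone would give only $m\ge p-1$, not all $m\ge1$.

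\emph{The missing idea is variational.}
\begin{itemize}
\item Let $\bar s$ be the last point where $v^*=\bar v^*$. Set $w=\max\{v^*,\bar v^*\}$ on $(0,\bar s)$ and $w=\bar v^*$ afterwards.
\item Because $\alpha\ge0$, $w$ still satisfies $\phi_1(k^{-1}(s))^p(-w')^{p-1}\le\alpha\int_0^s w^{p-1}+s$.
\item Multiplying by $-w'$ and integrating gives $\int_0^{k(t)}\phi_1(k^{-1}(s))^p|w'|^p\,\di s\le\alpha\int_0^{k(t)}w^p\,\di s+\int_0^{k(t)}w\,\di s$.
\item Plugging $w(k(x_1))$ into \eqref{1dimQ} then yields $Q_p^\sharp(\alpha,t)\ge(p-1)\int w\ge(p-1)\int\bar v^*=Q_p^\sharp(\alpha,t)$.
\item Uniqueness of the maximizer (Proposition \ref{prop}(b)) forces $w=\bar v^*$.
\end{itemize}
Thus $v^*\le\bar v^*$ before $\bar s$ and $v^*\ge\bar v^*$ after it. Combined with the mass identity, this gives $\int_0^s v^*\le\int_0^s\bar v^*$ for all $s$, hence \eqref{qq} for every $m\ge1$.

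\emph{The case $\alpha<0$.} The local part of your plan is sound. Set $\Psi(s)=\int_0^s\bigl((v^*)^{p-1}-(\bar v^*)^{p-1}\bigr)\,\di\sigma$. Wherever $\Psi>0$ on $(0,k(t))$, the inequality forces $v^*-\bar v^*$ to be strictly increasing, and $\Psi$ is nondecreasing on $(k(t),\gamma_N(\Omega))$. Hence $\Psi$ has no interior positive local maximum.

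However, the normalization fixes $\int v^*$, not $\Psi(\gamma_N(\Omega))$. Nothing in your plan excludes $\max\Psi=\Psi(\gamma_N(\Omega))>0$, and this endpoint case is where the work lies.
\begin{itemize}
\item For $1<p\le2$, the paper shows this alternative still yields the plain mass comparison. One has $\Psi\le0$ on some $(0,\tilde s)$, which for $p-1\le1$ gives the mass inequality there. On $(\tilde s,\gamma_N(\Omega))$ one has $\Psi>0$, hence $v^*\ge\bar v^*$, and the mass identity finishes the argument.
\item For $p>2$, the paper rules the alternative out. It uses $a-b\ge\frac{a^{2-p}}{p-1}(a^{p-1}-b^{p-1})$, an integration by parts against the increasing weight $(v^*)^{2-p}$, the bound $v^*(0)\le(-1/\alpha)^{1/(p-1)}$, and the mass identity. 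Together these would force $\int_0^{\gamma_N(\Omega)}(v^*-\bar v^*)\,\di\sigma>0$, a contradiction.
\end{itemize}

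Your account of the exponent range is also inconsistent. If the $m=1$ mass comparison were available for $\alpha<0$, Proposition \ref{relazione} would give all $m\ge1$. The restriction $m\ge p-1$ for $p>2$ arises precisely because only $\Psi\le0$ is obtained there.
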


\begin{proof}
By standard arguments, see for example \cite{BBMP}, \cite{DF}, we can verify that for $v^*$ the following inequality holds true
\begin{equation}\label{uprob}
\frac 1{(2\pi)^{\frac p2}} \exp\left (-\frac{p(k^{-1}(s))^2}{2}\right)
\bigl(-(v^*)'(s)\bigr)^{p-1}
\le \alpha \int_0^s\bigl(v^*(\sigma)\bigr)^{p-1} \,\di\sigma +s\,, \quad\hbox{ a.e. in }
(0,\gamma_N(\Omega)),
\end{equation}
while $\bar v^*$ satisfies
\begin{equation}\label{vprob}
\frac 1{(2\pi)^{\frac p2}} \exp\left (-\frac{p(k^{-1}(s))^2}{2}\right)
\bigl(-(\bar v^*)'(s)\bigr)^{p-1}
= \alpha \int_0^s\bigl(\bar v^*(\sigma)\bigr)^{p-1} \,\di\sigma +s\,, \quad\hbox{ a.e. in }
(0, \gamma_N(H_t)),
\end{equation}
and we will understand that $v^*(s)$ and $\bar v^*(s)$ are extended to 0, respectively, for 
$s>\gamma_N(\Omega)$ and for
$s>\gamma_N(H_t)$.

Let us observe that, in view of Proposition \ref{prop}, items (\ref{(c)}), (\ref{(d)}), it holds
\begin{equation}\label{compH}
\gamma_N (H_t) \le 
\gamma_N (\Omega).
\end{equation}
Furthermore, the equality $Q_p(\alpha,\Omega)= Q_p^\sharp(\alpha,t)$ implies
\begin{equation}\label{obs}
\int_0^{\gamma_N (\Omega)}v^*(\sigma)\,\di\sigma=\int_0^{\gamma_N (H_t)}\bar v^*(\sigma) \,\di\sigma.
\end{equation}
Then it is well defined
\[
\bar s=\sup\{s>0:v^*(s)=\bar v^*(s)\}\le\gamma_N (H_t).
\]
We first assume that $\alpha\ge0$. Let us put
\begin{equation}\label{v}
w(s)=\left\{
   \begin{array}{ll}
\max\{v^*(s),\bar v^*(s)\},&0< s<\bar s,\\
&\\
\bar v^*(s),&\text{otherwise}.
\end{array}
 \right.
\end{equation}
Then from \eqref{uprob} and \eqref{vprob} we know that for a.e. $s\in (0,\gamma_N (H_t))$,
\begin{equation}\label{wprob}
 \left\{
   \begin{array}{lll}\displaystyle
\frac 1{(2\pi)^{\frac p2}} \exp\left (-\frac{p(k^{-1}(s))^2}{2}\right)
\bigl(-w'(s)\bigr)^{p-1}
\le \alpha \int_0^s\bigl(w(\sigma)\bigr)^{p-1} \,\di\sigma +s\,,\\
&&\\
w(s)\ge\bar v^*(s)\,.
\end{array}
 \right.
\end{equation}
Multiplying the first inequality by $-w'(s)$ and integrating, we have
\begin{equation}\label{intW}
\int_0^{\gamma_N ( H_t)}|w'(s)|^{p}
\frac 1{(2\pi)^{\frac p2}} \exp\left (-\frac{p(k^{-1}(s))^2}{2}\right)
\,\di s
\le
\alpha\int_0^{\gamma_N ( H_t)} |w(s)|^p\,\di s +\int_0^{\gamma_N ( H_t)}w(s)\,\di s\,.
\end{equation}
We can also use $\tilde w(x)=w(k(x_1))$ as a test function in $Q^\sharp_{p}(\alpha,t)$, to get
\begin{align*}
Q^\sharp_{p}(\alpha,t)
&\ge-\int_{-\infty}^{t}|\tilde w'(x_1)|^{p}
\phi_1(x_1)
\,\di x_1+\alpha
\int_{-\infty}^{t} |\tilde w(x_1)|^{p}
\phi_1(x_1)
\,\di x_1 +p\int_{-\infty}^{t}\tilde w(x_1)\phi_1(x_1)
\,\di x_1
\\
&=-\int_0^{\gamma_N ( H_t)}|w'(s)|^{p}
\frac 1{(2\pi)^{\frac p2}} \exp\left (-\frac{p(k^{-1}(s))^2}{2}\right)
\,\di s+\alpha
\int_0^{\gamma_N ( H_t)} |w(s)|^p\,\di s +p\int_0^{\gamma_N ( H_t)}w(s)\,\di s
\\
&\ge (p-1)\int_0^{\gamma_N ( H_t)}w(s)\,\di s\ge
(p-1)\int_0^{\gamma_N ( H_t)}\bar v^*(s)\,\di s
=
Q^\sharp_{p}(\alpha,t).
\end{align*}
By the characterization of the maximum we have $w=\bar v^*$, that is
\[
\left\{
   \begin{array}{lll}\displaystyle
\bar v^*(s)\ge v^*(s) \quad\text{in }(0,\bar s),\\
&&\\
v^*(s)\ge \bar v^*(s) \quad\text{otherwise}.
\end{array}
 \right.
\]
Using again \eqref{obs}, we have
\[
\int_0^{s} v^*(\sigma)\,\di \sigma\le\int_0^{s} \bar v^*(\sigma)\,\di \sigma,
\qquad s>0.
\]
Hence \eqref{qq} follows by Proposition \ref{relazione}, for every $1\le m<\infty$.

Let us now assume that $\alpha < 0$. 

We firstly observe that, under this condition, both $v$ and $\bar{v}$ are bounded. 
Indeed, let us define 
\[
M_\alpha = \left( - \frac{1}{\alpha} \right)^{\frac{1}{p-1}}.
\]
By choosing $(v - M_\alpha)_+$ as a test function in \eqref{veq.0}, we obtain
\[
\int_{\{v > M_\alpha\}} |Dv|^p \, d\gamma_N = \int_{\{v > M_\alpha\}} \left( \alpha v^{p-1} + 1 \right) (v - M_\alpha) \, d\gamma_N \le 0,
\]
since $\alpha v^{p-1} + 1 \le 0$ on the set where $v > M_\alpha$. This implies that $(v - M_\alpha)_+ = 0$ a.e. in $\Omega$, which leads to
\[
0 \le v \le M_\alpha \quad \text{a.e. in } \Omega.
\]
A similar argument applied to $\bar{v}$ yields
\[
0 \le \bar{v} \le M_\alpha \quad \text{a.e. in } H_t.
\]
In particular, we conclude that
\begin{equation} \label{M_a}
v^*(0) = \operatorname*{ess\,sup}_{\Omega} v \in (0, M_\alpha], \qquad 
\bar{v}^*(0) = \operatorname*{ess\,sup}_{H_t} \bar{v} \in (0, M_\alpha].
\end{equation}

We then consider the function
\[
\Psi(s)=\int_0^{s}\left(\bigl(v^*(\sigma)\bigr)^{p-1}-\bigl(\bar v^*(\sigma)\bigr)^{p-1}\right)\, \di \sigma,
\qquad0\le s\le\gamma_N(\Omega).
\]

We claim that $\Psi$ cannot achieve a local positive maximum in $(0,\gamma_N(\Omega))$. Indeed, the interval
$\gamma_N(H_t)\le s\le\gamma_N(\Omega)$ is immediately excluded, since there $\bar v^*(s)=0$ and therefore $\Psi$ is nondecreasing, in fact increasing unless $v^*=0$.

Suppose, by contradiction, that $\Psi$ has a local positive maximum at some point $s_0\in(0,\gamma_N(H_t))$.
Then  for some $\delta>0$, it holds
\begin{equation}\label{aneg}
\Psi(s_0)=\max_{|s-s_0|<\delta}\Psi(s)>0,
\end{equation}
with $v^*(s_0)-\bar v^*(s_0)=0$.
This means that the function $v^*(s)- \bar v^*(s)$ is strictly increasing, with $v^*(s_0)-\bar v^*(s_0)=0$, then \eqref{aneg} cannot hold and
the claim follows.

We finally distinguish two possibilities.

If
\[
\Psi(s)\le0\qquad\forall s\in(0,\gamma_N(\Omega)),
\]
then
\[
\int_0^{s}\bigl(v^*(\sigma)\bigr)^{p-1}\,\di \sigma
\le
\int_0^{s}\bigl(\bar v^*(\sigma)\bigr)^{p-1}\,\di \sigma,
\qquad s>0.
\]
Therefore \eqref{qq} follows by Proposition \ref{relazione} for every $m\ge p-1$. In particular, this gives the desired conclusion when $p\ge2$.

It remains to consider the case $1<p\le2$. In this case $p-1\le1$. If $\Psi(s)>0$ for some $s$, the previous claim implies that the positive maximum of $\Psi$ is attained at $s=\gamma_N(\Omega)$. Hence there exists $\tilde s\in(0,\gamma_N(\Omega))$ such that
\[
\left\{
   \begin{array}{ll}\displaystyle
\Psi(s)\le0 &\quad\text{for }0\le s\le\tilde s,\\
&\\
\Psi(s)>0 &\quad\text{for }\tilde s< s\le\gamma_N(\Omega),\\
&\\
v^*(s)\ge \bar v^*(s) &\quad\text{for }\tilde s< s\le\gamma_N(\Omega).
\end{array}
 \right.
\]
Together with \eqref{obs}, this yields
\[
\int_0^{s}v^*(\sigma)\,\di\sigma
\le
\int_0^{s}\bar v^*(\sigma)\,\di\sigma,
\qquad s>0.
\]
Thus, by Proposition \ref{relazione}, \eqref{qq} follows for every $1\le m<\infty$.

Finally, let $p>2$. We show that the alternative
\[
\max_{0\le s\le\gamma_N(\Omega)}\Psi(s)=\Psi(\gamma_N(\Omega))>0
\]
cannot occur. For $s<\gamma_N(\Omega)$, using the inequality
\[
a-b\ge \frac{a^{2-p}}{p-1}\left(a^{p-1}-b^{p-1}\right),
\qquad a,b>0,
\]
we get
\begin{align*}
\int_0^{s}\left(v^*(\sigma)-\bar v^*(\sigma)\right)\, d\sigma
&\ge
\int_0^{s}\left(\bigl(v^*(\sigma)\bigr)^{p-1}-\bigl(\bar v^*(\sigma)\bigr)^{p-1}\right)
\frac{\bigl(v^*(\sigma)\bigr)^{2-p}}{p-1}\, d\sigma
\\
&=
\frac{\bigl(v^*(s)\bigr)^{2-p}}{p-1}\Psi(s)
-\int_0^{s}
\left(\frac{\bigl(v^*(\sigma)\bigr)^{2-p}}{p-1}\right)'\Psi(\sigma)\, d\sigma
\\
&\ge
\frac{\bigl(v^*(0)\bigr)^{2-p}}{p-1}\Psi(\gamma_N(\Omega)) \\
&\quad -\frac{\bigl(v^*(s)\bigr)^{2-p}}{p-1}
\int_s^{\gamma_N(\Omega)}
\left(\bigl(v^*(\sigma)\bigr)^{p-1}-\bigl(\bar v^*(\sigma)\bigr)^{p-1}\right)\, d\sigma.
\end{align*}
The last term vanishes as $s\rightarrow\gamma_N(\Omega)$, because
\[
\frac1{\bigl(v^*(s)\bigr)^{p-2}}
\int_s^{\gamma_N(\Omega)}
\left(\bigl(v^*(\sigma)\bigr)^{p-1}-\bigl(\bar v^*(\sigma)\bigr)^{p-1}\right)\,\di\sigma
\le
v^*(s)\bigl(\gamma_N(\Omega)-s\bigr).
\]
Passing to the limit as $s\rightarrow\gamma_N(\Omega)$, taking into account of  \eqref{M_a}, we obtain
\[
\int_0^{\gamma_N(\Omega)}
\left(v^*(\sigma)-\bar v^*(\sigma)\right)\,\di \sigma
\ge
\frac{\bigl(v^*(0)\bigr)^{2-p}}{p-1}\Psi(\gamma_N(\Omega))>0,
\]
which contradicts \eqref{obs}. Therefore
\[
\Psi(s)\le0\qquad\forall s\in(0,\gamma_N(\Omega)).
\]
Consequently,
\[
\int_0^{s}\bigl(v^*(\sigma)\bigr)^{p-1}\,\di\sigma
\le
\int_0^{s}\bigl(\bar v^*(\sigma)\bigr)^{p-1}\,\di\sigma,
\qquad s>0.
\]
By Proposition \ref{relazione}, applied to $\bigl(v^*\bigr)^{p-1}$ and $\bigl(\bar v^*\bigr)^{p-1}$ with the convex function
$F(\tau)=\tau^{m/(p-1)}$, we obtain \eqref{qq} for every $m\ge p-1$.

The proof is complete.
\end{proof}

We conclude the section observing that the arguments used above allow us to get a comparison result also for the first eigenfunction for problem \eqref{eq.0} (see \cite{BCF}) for the case $p=2$).
\begin{theorem}\label{compPR}
Let $u$ be a positive eigenfunction for problem \eqref{eq.0} and let $t\in\R$ be such that
$\lambda_1(\Omega)= \lambda_1(H_t)$. If, for $q\ge1$, $w_q$ is the first eigenfunction for problem
\begin{equation} \label{veqP}
\left\{
\begin{array}
[c]{lll}%
-\dive(\phi_N (x)|Dw_q|^{p-2}D\bar v)=\lambda_1(\Omega) \phi_N (x)|w_q|^{p-2}w_q& & \text{in }%
H_t,\\
\\
w_q=0 & & \text{on }\partial H_t,
\end{array}
\right. %
\end{equation}
such that
\begin{equation}\label{qqq}
\int_{\Omega}\bigl(u(x)\bigr)^q\, \di \gamma_N = \int_{H_t} \bigl(w_q(x)\bigr)^q\, \di \gamma_N.
\end{equation}
then, if $q\le m<\infty$, for every $r\le t$ it holds
\begin{equation}\label{qqP}
\int_{H_r}\bigl(u^\sharp(x)\bigr)^m\, \di \gamma_N \le \int_{H_r} \bigl(w_q(x)\bigr)^m\, \di \gamma_N.
\end{equation}
\end{theorem}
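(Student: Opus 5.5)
Following the proof of Theorem \ref{comp}, we work with the one-dimensional rearrangements $u^*$ and $w_q^*$. Write $\lambda=\lambda_1(\Omega)=\lambda_1(H_t)$. By the Faber--Krahn inequality (Theorem \ref{KR}) and the monotonicity of $t\mapsto\lambda_1(H_t)$ (Proposition \ref{half}), we have $\gamma_N(H_t)\le\gamma_N(\Omega)$. The symmetrization arguments of \cite{BBMP}, \cite{DF} give, for a.e. $s\in(0,\gamma_N(\Omega))$,
\begin{equation*}
\frac 1{(2\pi)^{\frac p2}} \exp\left (-\frac{p(k^{-1}(s))^2}{2}\right)\bigl(-(u^*)'(s)\bigr)^{p-1}\le \lambda\int_0^s\bigl(u^*(\sigma)\bigr)^{p-1}\,\di\sigma,
\end{equation*}
while $w_q^*$ satisfies the same relation with equality on $(0,\gamma_N(H_t))$. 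Both functions are extended by zero beyond their supports. The normalization \eqref{qqq} reads $\int_0^{\gamma_N(\Omega)}(u^*)^q\,\di s=\int_0^{\gamma_N(H_t)}(w_q^*)^q\,\di s$. By Proposition \ref{relazione}, applied to $(u^*)^q$, $(w_q^*)^q$ with the convex function $F(\tau)=\tau^{m/q}$ for $m\ge q$, it suffices to prove
\begin{equation*}
\int_0^s\bigl(u^*\bigr)^q\,\di\sigma\le\int_0^s\bigl(w_q^*\bigr)^q\,\di\sigma\qquad\forall s>0 .
\end{equation*}

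The first key step is to show that $u^*$ and $w_q^*$ cross exactly once. Since $\int(u^*)^q=\int(w_q^*)^q$ and the support of $u^*$ is the larger one, the two must intersect. Let $\bar s=\sup\{s: u^*(s)=w_q^*(s)\}$ and define, as in \eqref{v}, $w=\max\{u^*,w_q^*\}$ on $(0,\bar s)$ and $w=w_q^*$ afterwards. Then $w$ satisfies the differential inequality above on $(0,\gamma_N(H_t))$ and $w\ge w_q^*$. Multiplying by $-w'$ and integrating, as in \eqref{intW}, gives
\begin{equation*}
\int_0^{\gamma_N(H_t)}|w'|^p\,\frac{e^{-p(k^{-1}(s))^2/2}}{(2\pi)^{p/2}}\,\di s\le\lambda\int_0^{\gamma_N(H_t)}w^p\,\di s .
\end{equation*}
Hence $\tilde w(x)=w(k(x_1))$ is admissible in the Rayleigh quotient \eqref{1dim} on $H_t$ with quotient at most $\lambda=\lambda_1(H_t)$, so $\tilde w$ is a first eigenfunction. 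By the simplicity in Proposition \ref{half}, $w=c\,w_q^*$ for some constant $c$. Because $w=w_q^*$ on $(\bar s,\gamma_N(H_t))$, we get $c=1$ provided that interval is nontrivial, hence $u^*\le w_q^*$ on $(0,\bar s)$ and $u^*\ge w_q^*$ after. If instead $\bar s=\gamma_N(H_t)$, then $u^*\ge w_q^*$ near $\gamma_N(H_t)$ while $w_q^*$ vanishes there, and the same conclusion follows.

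With the single crossing in hand, the function $s\mapsto\int_0^s\bigl((u^*)^q-(w_q^*)^q\bigr)\,\di\sigma$ is nonincreasing on $(0,\bar s)$ and nondecreasing afterwards. It vanishes at $s=0$ and, by \eqref{qqq}, at $s=\gamma_N(\Omega)$, so it is $\le0$ throughout, which is the required inequality.

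The main obstacle is the degenerate case $\bar s=\gamma_N(H_t)$ together with the identification $c=1$. This needs care because the differential inequality is homogeneous of degree $p-1$, so normalization alone does not pin down the multiple. I would first try to argue via the definition of $\bar s$: equality $u^*=w_q^*$ at points accumulating at $\bar s$ forces $c=1$ whenever $\bar s<\gamma_N(H_t)$. The remaining case would be handled by a direct comparison near the boundary of the supports.
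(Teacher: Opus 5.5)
Your argument is the paper's proof: the same auxiliary function (maximum of $u^*$ and $w_q^*$ up to $\bar s$, then $w_q^*$), the same energy inequality showing it is a first eigenfunction on $H_t$, and the same conclusion via Proposition \ref{relazione}; the paper simply asserts $z=w_q^*$ and never discusses the multiplicative constant. The obstacle you flag is not a real one, and your sentence claiming $u^*\ge w_q^*$ near $\gamma_N(H_t)$ in the degenerate case should be replaced by the following: by \eqref{qqq} and continuity there is at least one coincidence point $s_0\in(0,\gamma_N(H_t))$, and there $c\,w_q^*(s_0)=w(s_0)=w_q^*(s_0)>0$, so $c=1$ even when $\bar s=\gamma_N(H_t)$ (a case that can only occur if $\gamma_N(H_t)=\gamma_N(\Omega)$, since otherwise $u^*(\gamma_N(H_t))>0$ prevents coincidence points from accumulating there).
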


\begin{proof}
By standard arguments (see, for example, \cite{BBMP}, \cite{DF}) we can verify that for $v^*$ the following inequality holds true
\begin{equation}\label{upr}
-
\frac 1{(2\pi)^{\frac p2}} \exp\left (-\frac{p(k^{-1}(s))^2}{2}\right)\bigl((u^*)'(s)\bigr)^{p-1}
\le \lambda_1(\Omega) \int_0^s\bigl(u^*(\sigma)\bigr)^{p-1} \,\di\sigma \,, \quad\hbox{ a.e. in }
(0,\gamma_N(\Omega)),
\end{equation}
while $\bar v^*$ satisfies
\begin{equation}\label{vpr}
-\frac 1{(2\pi)^{\frac p2}} \exp\left (-\frac{p(k^{-1}(s))^2}{2}\right)\bigl((\bar w_q^*)'(s)\bigr)^{p-1}
= \lambda_1(\Omega) \int_0^s\bigl(w_q^*(\sigma)\bigr)^{p-1} \,\di\sigma \,, \quad\hbox{ a.e. in }
(0, \gamma_N(H_t)),
\end{equation}
and we will understand that $u^*(s)$ and $w_q^*(s)$ are extended to 0, respectively, for 
$s>\gamma_N(\Omega)$ and for
$s>\gamma_N(H_t)$.

Let us observe that, in view of Theorem \ref{KR}, it holds
\begin{equation}\label{comH}
\gamma_N (H_t) \le 
\gamma_N (\Omega),
\end{equation}
then, taking into account \eqref{qqq}, it is well defined
$$\bar s=\sup\{s>0:u^*(s)=\bar w_q^*(s)\}\le\gamma_N (H_t).
$$
Let us put
\begin{equation}\label{vP}
z(s)=\left\{
   \begin{array}{ll}
\max\{u^*(s), w_q^*(s)\},&0< s<\bar s\\
&\\
w_q^*(s),&\text{otherwise}
\end{array},
 \right.
\end{equation}
then from \eqref{upr} and \eqref{vpr} we know that for a.e. $s\in (0,\gamma_N (H_t))$,
\begin{equation}\label{zprob}
 \left\{
   \begin{array}{lll}\displaystyle
-
\frac 1{(2\pi)^{\frac p2}} \exp\left (-\frac{p(k^{-1}(s))^2}{2}\right)\bigl(z'(s)\bigr)^{p-1}
\le \lambda_1(\Omega) \int_0^s\bigl(z(\sigma)\bigr)^{p-1} \,\di\sigma \,,\\
&&\\
z(s)\ge w_q^*(s)\,.
\end{array}
 \right.
\end{equation}
Multiplying the first inequality by $-z'(s)$ and integrating, we have
\begin{equation}\label{intz}
\int_0^{\gamma_N ( H_t)}|z'(s)|^{p}
\frac 1{(2\pi)^{\frac p2}} \exp\left (-\frac{p(k^{-1}(s))^2}{2}\right)
\,\di s
\le
\lambda_1(\Omega)\int_0^{\gamma_N ( H_t)} |z(s)|^p\,\di s \,.
\end{equation}
We can also use $\tilde z(x)=z(k(x))$ as a test function in $\lambda_1(H_t)=\lambda_1(\Omega)$, to get
\begin{equation*}
\lambda_1(H_t)\le \dfrac{\displaystyle\int_{-\infty}^{t}|\tilde z'(\sigma)|^{p}\phi_1(\sigma)\, \di\sigma}{\displaystyle\int_{-\infty}^{t} |\tilde z(\sigma)|^p \phi_1(\sigma)\,\di \sigma}=\dfrac{\displaystyle\int_0^{\gamma_N ( H_t)}|z'(s)|^{p}
\frac 1{(2\pi)^{\frac p2}} \exp\left (-\frac{p(k^{-1}(s))^2}{2}\right)
\,\di s}{\displaystyle\int_0^{\gamma_N ( H_t)} |z(s)|^p\,\di s}
\le
\lambda_1(H_t).
\end{equation*}
By the characterization of the first eigenvalue we have $z=w_q^*$, that is,
\begin{equation*}
\left\{
   \begin{array}{lll}\displaystyle
w_q^*(s)\ge u^*(s) \quad\text{in }(0,\bar s),\\
&&\\
u^*(s)\ge w_q^*(s) \quad\text{otherwise}\,.
\end{array}
 \right.
\end{equation*}
Using again \eqref{qqq}, we have
\begin{equation*}
\int_0^{s} \bigl(u^*(\sigma)\bigr)^q\,\di \sigma\le\int_0^{s} \bigl(w_q^*(\sigma)\bigr)^q\,\di \sigma,\qquad s>0,
\end{equation*}
and, finally,  \eqref{qqP} follows
by Proposition \ref{relazione}.
\end{proof}
An immediate consequence of the above comparison result is the following Payne-Rayner
type inequality.
\begin{theorem}\label{PR}
Let $\lambda_1(\Omega)$ the first eigenfunction for problem \eqref{eq.0} and let $u$ be any eigenfunction associated to it. Then, for $1\le q\le m<\infty$, we have
\begin{equation*} 
\|u\|_{L^m(\Omega,\phi_N)} \le \beta \|u\|_{L^q(\Omega,\phi_N)},
\end{equation*}
where
\begin{equation*} 
\beta =\dfrac{\|w_q\|_{L^m(\Omega,\phi_N)}}{\|w_q\|_{L^q(\Omega,\phi_N)}},
\end{equation*}
with $w_q$ defined as in Theorem \ref{compPR}.
\end{theorem}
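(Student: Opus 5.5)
The plan is to derive the inequality directly from Theorem \ref{compPR}, taking $r=t$ in \eqref{qqP}. The half-space $H_t$ is the one with $\lambda_1(H_t)=\lambda_1(\Omega)$, and $w_q$ is the first eigenfunction on $H_t$ normalized by \eqref{qqq}. The norms of $w_q$ in the definition of $\beta$ are to be read on $H_t$ (with $w_q$ extended by zero they may be read on any set containing $H_t$). I will check that $\beta$ does not depend on the normalization of $w_q$. By Proposition \ref{half} the first eigenvalue on $H_t$ is simple, so any two admissible choices differ by a positive multiplicative constant, and $\beta$, being a ratio of two norms that are homogeneous of degree one, is invariant under scaling.

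First I reduce to a positive eigenfunction. The first eigenvalue $\lambda_1(\Omega)$ is simple, which one obtains by the same uniqueness device used in Proposition \ref{prop}(\ref{(b)}) applied to the homogeneous problem. Hence any eigenfunction is $u=c\,u_1$ with $u_1>0$, and both sides of the claimed inequality scale by $|c|$. So I may assume $u>0$ and apply Theorem \ref{compPR}.

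Next, with $r=t$, we have $H_t\cap\Omega^\sharp=H_t$ and $\gamma_N(H_t)\le\gamma_N(\Omega)$ by \eqref{comH}. The comparison \eqref{qqP} gives
\[
\int_{H_t}(u^\sharp)^m\,\di\gamma_N\le\int_{H_t}w_q^m\,\di\gamma_N .
\]
The left side is only part of $\|u\|^m_{L^m(\Omega,\phi_N)}$ when $\gamma_N(H_t)<\gamma_N(\Omega)$, so I need the inequality on all of $(0,\gamma_N(\Omega))$. Here I will invoke the statement actually established in the proof of Theorem \ref{compPR}:
\[
\int_0^s (u^*)^q\,\di\sigma\le\int_0^s (w_q^*)^q\,\di\sigma\qquad\text{for all } s>0,
\]
with $w_q^*$ extended by zero beyond $\gamma_N(H_t)$. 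At $s=\gamma_N(\Omega)$ this holds with equality by \eqref{qqq}. Applying Proposition \ref{relazione} to $f=(u^*)^q$ and $g=(w_q^*)^q$ on $(0,\gamma_N(\Omega))$, with the convex function $F(\tau)=\tau^{m/q}$ (convex because $m\ge q$; truncate it to make it Lipschitz and pass to the limit by monotone convergence), gives
\[
\int_\Omega u^m\,\di\gamma_N\le\int_{H_t}w_q^m\,\di\gamma_N,
\]
using \eqref{inv} on both sides.

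Finally, I take the $m$-th root and use \eqref{qqq} to rewrite it as $\|u\|_{L^q(\Omega,\phi_N)}=\|w_q\|_{L^q(H_t,\phi_N)}$. This yields $\|u\|_{L^m}\le\|w_q\|_{L^m}=\beta\|w_q\|_{L^q}=\beta\|u\|_{L^q}$.

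The main obstacle is the bookkeeping of domains: the comparison must cover the whole of $(0,\gamma_N(\Omega))$ and not only $H_t$. This is settled by using the mass inequality for all $s$, together with the zero extension of $w_q^*$, rather than the restricted form \eqref{qqP}.
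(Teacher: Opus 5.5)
Your argument is correct and is exactly the ``immediate consequence'' of Theorem \ref{compPR} that the paper has in mind (it gives no separate proof). You are also right that \eqref{qqP} with $r\le t$ only controls $\int_0^{\gamma_N(H_t)}(u^*)^m\,\di s$, so one must instead use the mass inequality $\int_0^s (u^*)^q\le\int_0^s (w_q^*)^q$ for all $s\in(0,\gamma_N(\Omega))$ established in that proof, with $w_q^*$ extended by zero. The one step to repair is the reduction to a positive eigenfunction: $\Omega$ is not assumed connected, so $\lambda_1(\Omega)$ need not be simple. Replace $u$ by $|u|$ instead. It has the same norms, it is again a minimizer of the Rayleigh quotient and hence an eigenfunction, and the proof of Theorem \ref{compPR} applies to it verbatim because that proof uses only $u\ge0$.
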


\section{Main result}\label{s4}

In this section we prove the main result of the paper which can be stated as follows.

\begin{theorem}\label{final}
For any $\alpha\in(-\infty,\lambda_1(\Omega))$ and for any set $\Omega\subset\RR$ with $\gamma_N (\Omega)<1$, letting $H_{t(\alpha)}\subset\Omega^\sharp$ be the half-space such
that $Q_p(\alpha,\Omega)= Q_p(\alpha,H_{t(\alpha)})$, we have
\begin{equation}\label{mainre}
\lambda_1(\Omega)\ge \lambda_1(H_{t(\alpha)}).
\end{equation}
\end{theorem}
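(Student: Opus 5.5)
The plan follows the strategy announced in the introduction: show that $\alpha\mapsto t(\alpha)$ is nondecreasing on $(-\infty,\lambda_1(\Omega))$, then let $\alpha\to\lambda_1(\Omega)^-$. Since $t(\alpha)<\bar t(\alpha)$ whenever $\alpha>0$, where $\lambda_1(H_{\bar t(\alpha)})=\alpha$ (Proposition \ref{prad}), and $t\mapsto\lambda_1(H_t)$ is decreasing, we get
\[
\lambda_1(H_{t(\alpha)})>\alpha\qquad\text{for every }\alpha\in(0,\lambda_1(\Omega)).
\]
If $t(\cdot)$ is nondecreasing, then for fixed $\alpha_0$ and all $\alpha\in(\alpha_0,\lambda_1(\Omega))$ we have $\lambda_1(H_{t(\alpha_0)})\ge\lambda_1(H_{t(\alpha)})>\alpha$. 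Letting $\alpha\to\lambda_1(\Omega)^-$ gives \eqref{mainre}. (For $\alpha_0\le 0$, simply take $\alpha>\max\{0,\alpha_0\}$.)

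For the monotonicity, I would first prove that $t(\alpha)$ is differentiable by the implicit function theorem applied to $G(\alpha,t)=Q_p^\sharp(\alpha,t)-Q_p(\alpha,\Omega)=0$. Here $\partial_t Q_p^\sharp=(p-1)|\bar v'(t)|^p\phi_1(t)>0$ by Proposition \ref{derQ}. For the $\alpha$-derivatives, Proposition \ref{propa}(\ref{(ab)}) gives $\partial_\alpha Q_p(\alpha,\Omega)=\int_\Omega v^p\,\di\gamma_N$ and $\partial_\alpha Q_p^\sharp(\alpha,t)=\int_{H_t}\bar v^p\,\di\gamma_N$. Joint continuity of these derivatives in $(\alpha,t)$ needs a short argument, using the convergence \eqref{conve} together with continuity in $t$ of the one-dimensional solutions. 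Differentiating the identity defining $t(\alpha)$ then gives
\[
t'(\alpha)=\frac{\displaystyle\int_\Omega v^p\,\di\gamma_N-\int_{H_{t(\alpha)}}\bar v^p\,\di\gamma_N}{(p-1)|\bar v'(t(\alpha))|^p\phi_1(t(\alpha))}.
\]
If differentiability is troublesome, I would instead use a difference-quotient argument. For $\beta>\alpha$, monotonicity of $Q_p^\sharp$ in $t$ reduces the claim $t(\beta)\ge t(\alpha)$ to showing $Q_p(\beta,\Omega)\ge Q_p^\sharp(\beta,t(\alpha))$, i.e.
\[
\int_\alpha^\beta\Bigl(\int_\Omega v_{(s)}^p\,\di\gamma_N-\int_{H_{t(\alpha)}}\bar v_{(s)}^p\,\di\gamma_N\Bigr)\di s\ge0 .
\]
This can be handled by a continuity or first-exit argument on the set where the inequality fails.

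The key sign $\int_\Omega v^p\,\di\gamma_N\ge\int_{H_{t(\alpha)}}\bar v^p\,\di\gamma_N$ is where I expect the main obstacle. Theorem \ref{comp} with $r=t$ and $m=p$ (admissible for all $\alpha$, since $p\ge\max\{1,p-1\}$) gives the \emph{opposite} inequality $\int v^p\le\int\bar v^p$ on $H_t$. However, $v^\sharp$ is supported on $\Omega^\sharp\supseteq H_t$ and may be positive beyond $t$. So I would compare instead through the constraint: $\int_\Omega v\,\di\gamma_N=\int_{H_t}\bar v\,\di\gamma_N$ by \eqref{Q}, combined with the mass comparison $\int_0^s v^*\le\int_0^s\bar v^*$ from the proof of Theorem \ref{comp}. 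The plan is to test the equations against each other to express $\int v^p$ through $Q_p$ and the Dirichlet energy. Using $v$ and $\bar v$ as test functions gives
\[
\int|Dv|^p\,\di\gamma_N-\alpha\int v^p\,\di\gamma_N=\int v\,\di\gamma_N ,
\]
with the same right-hand side for $\bar v$. Combined with the P\'olya--Szeg\H o inequality for $v^\sharp$ and the maximality of $\bar v$ among radial competitors on $\Omega^\sharp$, this should yield the required sign. I would try first to choose a convex $F$ in Proposition \ref{relazione} adapted to the interval $[t,\tau]$ to control the tail of $v^\sharp$, and I expect this sign verification to be the genuinely delicate point of the proof.
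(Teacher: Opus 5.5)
\textbf{There is a genuine gap: your argument runs in the wrong direction, and the reversal is substantive.}

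First, the reduction. Suppose $t(\cdot)$ were nondecreasing. Then $\lambda_1(H_{t(\alpha_0)})\ge\lambda_1(H_{t(\alpha)})>\alpha$, and letting $\alpha\to\lambda_1(\Omega)^-$ gives $\lambda_1(H_{t(\alpha_0)})\ge\lambda_1(\Omega)$. That is the \emph{opposite} of \eqref{mainre}. The bound $t(\alpha)<\bar t(\alpha)$ from Proposition \ref{prad} is an upper bound on $t(\alpha)$, so it can only produce lower bounds on $\lambda_1(H_{t(\alpha)})$, never \eqref{mainre}.

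Second, the monotonicity you aim for is false in general. Take $\Omega=\{x_1<a\}\cup\{x_1>b\}$ with $a<b$, so that $\gamma_N(\Omega)<1$. The functional splits over the two components, and reflecting $x_1\mapsto-x_1$ gives $Q_p(\alpha,\Omega)=Q_p^\sharp(\alpha,a)+Q_p^\sharp(\alpha,-b)$. Since $Q_p^\sharp$ is positive and strictly increasing in $t$, we get $t(\alpha)>\max\{a,-b\}$. Hence $\lambda_1(H_{t(\alpha)})<\min\{\lambda_1(H_a),\lambda_1(H_{-b})\}=\lambda_1(\Omega)$ for every $\alpha<\lambda_1(\Omega)$. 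By your own deduction above, $t(\cdot)$ therefore cannot be nondecreasing here. So the sign you flag as the delicate point, $\int_\Omega v^p\,\di\gamma_N\ge\int_{H_{t(\alpha)}}\bar v^p\,\di\gamma_N$, is simply false in general. No convex $F$ or cross-testing of the equations will produce it.

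\textbf{What actually holds, and what the paper uses, is the reverse sign} $\int_\Omega v^p\,\di\gamma_N\le\int_{H_{t(\alpha)}}\bar v^p\,\di\gamma_N$. Your implicit-function formula for $t'(\alpha)$ then gives $t'\le0$, which is Proposition \ref{monot}.

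Your worry that Theorem \ref{comp} only controls $H_r$ with $r\le t$ is unfounded. Its proof gives $\int_0^s v^*\le\int_0^s\bar v^*$ for \emph{all} $s>0$, with $\bar v^*$ extended by zero past $\gamma_N(H_t)$. In the case $\alpha<0$, $p>2$, the same comparison holds for $(v^*)^{p-1}$ against $(\bar v^*)^{p-1}$. Applying Proposition \ref{relazione} on the whole interval $(0,\gamma_N(\Omega))$ then yields the global inequality. The tail of $v^\sharp$ beyond $H_t$ is already absorbed by the mass identity \eqref{obs}.

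With $t(\cdot)$ nonincreasing, what you need is a \emph{lower} bound on $\ell=\lim_{\alpha\to\lambda_1(\Omega)^-}t(\alpha)$. It comes from the blow-up in Proposition \ref{propa}, item (d), not from $t(\alpha)<\bar t(\alpha)$. Let $\tilde t$ satisfy $\lambda_1(H_{\tilde t})=\lambda_1(\Omega)$, suppose $\ell<\tilde t$, and pick $t_0\in(\ell,\tilde t)$.
\begin{itemize}
\item For $\alpha$ close to $\lambda_1(\Omega)$ we have $t(\alpha)\le t_0$.
\item By monotonicity in $t$ and in $\alpha$, $Q_p(\alpha,\Omega)=Q_p^\sharp(\alpha,t(\alpha))\le Q_p^\sharp(\lambda_1(\Omega),t_0)$.
\item The right-hand side is finite because $\lambda_1(H_{t_0})>\lambda_1(\Omega)$.
\item This contradicts $Q_p(\alpha,\Omega)\to+\infty$.
\end{itemize}
Hence $t(\alpha)\ge\ell\ge\tilde t$, that is, $\lambda_1(H_{t(\alpha)})\le\lambda_1(H_{\tilde t})=\lambda_1(\Omega)$, which is \eqref{mainre}.
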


\begin{remark} If in Theorem \ref{final} we put $p=2$ and $\alpha =0$, we obtain the inequality proved in \cite{HL}, which is the gaussian version of the inequality proved by Kohler-Jobin in the case of laplacian in the euclidean setting.
\end{remark}

Before giving the proof of Theorem \ref{final}, we prove the following result.
\begin{proposition}\label{monot}
For any $\alpha\in(-\infty,\lambda_1(\Omega))$, let $t(\alpha)\in\R$ be such that
$Q_p(\alpha,\Omega)= Q_p(\alpha,H_{t(\alpha)})$. The function $t(\alpha)$ is decreasing.
\end{proposition}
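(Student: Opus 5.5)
The plan is to differentiate the defining identity
\[
Q_p(\alpha,\Omega)=Q_p^\sharp\bigl(\alpha,t(\alpha)\bigr)
\]
with respect to $\alpha$, and then to use the comparison result Theorem \ref{comp} to fix the sign of $t'(\alpha)$.

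\textbf{Step 1: regularity of $t(\alpha)$.} We first show that $\alpha\mapsto t(\alpha)$ is differentiable via the implicit function theorem applied to $G(\alpha,t)=Q_p^\sharp(\alpha,t)-Q_p(\alpha,\Omega)$. By Proposition \ref{propa} (\ref{(ab)}), applied on $\Omega$ and on $H_t$, we have $\partial_\alpha Q_p(\alpha,\Omega)=\int_\Omega v^p\,\di\gamma_N$ and $\partial_\alpha Q_p^\sharp(\alpha,t)=\int_{H_t}\bar v^p\,\di\gamma_N$. By Proposition \ref{derQ}, $\partial_t Q_p^\sharp(\alpha,t)=(p-1)|\bar v'(t)|^p\phi_1(t)$.

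This $t$-derivative is strictly positive. Integrating the one-dimensional equation \eqref{2eqQ} from $-\infty$ to $t$ gives
\[
|\bar v'(t)|^{p-1}\phi_1(t)=\int_{-\infty}^t(\alpha\bar v^{p-1}+1)\phi_1\,\di\sigma,
\]
and this quantity is positive. For $\alpha\ge0$ it is obvious; for $\alpha<0$ it follows from the bound $\bar v\le M_\alpha$ established in the proof of Theorem \ref{comp}, which makes the integrand nonnegative and not identically zero.

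The remaining point, and the main obstacle, is the continuity of these partial derivatives jointly in $(\alpha,t)$. This requires stability of $\bar v$ and of $\bar v'(t)$ under perturbations of both parameters. I would obtain it in the one-dimensional setting, arguing exactly as in the proofs of \eqref{conve} and of Proposition \ref{finite}. The ODE expresses $|\bar v'|^{p-1}\phi_1$ as an integral of $\bar v$, so $L^p$ convergence of $\bar v$ yields pointwise convergence of $\bar v'$, including at the endpoint $t$.

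\textbf{Fallback if joint continuity is delicate.} In that case I would avoid differentiability altogether and argue with difference quotients. Fix $\alpha_0$ and set $g(\alpha)=Q_p^\sharp(\alpha,t(\alpha_0))-Q_p(\alpha,\Omega)$. Then $g(\alpha_0)=0$, and the inequalities \eqref{FQa} on both domains give $g'(\alpha_0)=\int_{H_{t(\alpha_0)}}\bar v^p-\int_\Omega v^p\ge0$ (by Step 2 below). This is then combined with the strict monotonicity of $Q_p^\sharp$ in $t$ to conclude $t(\alpha)\le t(\alpha_0)$ for $\alpha>\alpha_0$.

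\textbf{Step 2: the key inequality.} Differentiating the identity gives
\[
\int_\Omega v^p\,\di\gamma_N=\int_{H_{t(\alpha)}}\bar v^p\,\di\gamma_N+(p-1)|\bar v'(t(\alpha))|^p\phi_1(t(\alpha))\,t'(\alpha).
\]
So it suffices to show
\[
\int_\Omega v^p\,\di\gamma_N\le\int_{H_{t(\alpha)}}\bar v^p\,\di\gamma_N.
\]
This comes from Theorem \ref{comp} with $m=p$, which is admissible since $p\ge\max\{1,p-1\}$. However, the statement there is only for $r\le t$, while $v^\sharp$ lives on $\Omega^\sharp\supset H_t$. I will therefore use the proof of Theorem \ref{comp} directly. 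It yields $\int_0^s v^*\le\int_0^s\bar v^*$, or the analogous inequality with $(p-1)$-th powers, for all $s>0$, with $\bar v^*$ extended by zero beyond $\gamma_N(H_t)$.

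Applying Proposition \ref{relazione} on the interval $(0,\gamma_N(\Omega))$ with the convex function $F(\tau)=\tau^p$, or $\tau^{p/(p-1)}$ in the case of $(p-1)$-th powers, gives the inequality over the full domains. Combined with the strict positivity of $\partial_tQ_p^\sharp$ from Step 1, this forces $t'(\alpha)\le0$, so $t(\alpha)$ is decreasing.
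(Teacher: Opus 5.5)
This is essentially the paper's proof: differentiate $Q_p^\sharp(\alpha,t(\alpha))=Q_p(\alpha,\Omega)$ implicitly, use $\partial_t Q_p^\sharp>0$, and take the sign of $\int_{H_{t(\alpha)}}\bar v^p\,\di\gamma_N-\int_\Omega v^p\,\di\gamma_N$ from Theorem \ref{comp} with $m=p$. Your additions fill in what the paper leaves implicit: the strict positivity of $\partial_tQ_p^\sharp$, the joint continuity the implicit function theorem needs, and taking the full-domain inequality from the proof of Theorem \ref{comp}, since its statement only covers $r\le t$. Your fallback would not work as written, because $g(\alpha_0)=0$ and $g'(\alpha_0)\ge0$ give no sign for $g$ to the right of $\alpha_0$ when $g'(\alpha_0)=0$, and only local information otherwise; your main route does not need it.
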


\begin{proof}
Using notation \eqref{Qrad}, $t(\alpha)$ is implicitly defined by the equality
\[Q_p^\sharp(\alpha,t)-Q_p(\alpha,\Omega)= 0.\]
Collecting Proposition \ref{propa} (\ref{(ab)}), Proposition \ref{prad} and Proposition \ref{derQ} we have that $t(\alpha)$ is well defined and 
\[t'(\alpha)=-\frac{\frac\partial{\partial\alpha}(Q_p^\sharp(\alpha,t)-Q_p(\alpha,\Omega))}{\frac\partial{\partial t}Q_p^\sharp(\alpha,t)},
\]
where $\frac\partial{\partial t}Q_p^\sharp(\alpha,t)>0$ and
\[
\frac\partial{\partial t}\bigl(Q_p^\sharp(\alpha,t)-Q_p(\alpha,\Omega)\bigr)=\int_{H_{t(\alpha)}} |\bar v(x)|^p\, \di \gamma_N-
\int_\Omega |v(x)|^p\, \di \gamma_N,
\]
where $v$ is the solution to \eqref{veq.0} and $\bar v$ be the solution to \eqref{veq.0}.
By Theorem \ref{comp} we have
$$
t'(\alpha)\le 0
$$
and the claim is proven.
\end{proof}

\begin{proof}[Proof of Theorem \ref{final}]
Let us observe that Proposition \ref{prad} implies that the half-space $H_{t(\alpha)}\subset\Omega^\sharp$ is well defined.
We observe that $Q_p(\alpha,H_{t(\alpha)})$ is finite from Proposition \ref{prop} (\ref{(a)}).
Furthermore,  Proposition \ref{prad} states also that for any $\alpha>0$ it holds that
\[t(\alpha)<\bar t,
\]
where $\bar t\in \R$ is such that $\lambda_1(H_{\bar t})=\alpha$.
Hence, the monotonicity of $t(\alpha)$, proven in Proposition \ref{monot}, implies that the following limit exists
\[\lim_{\alpha\rightarrow\lambda_1(\Omega)^{-}}t(\alpha)=\ell.
\]
We claim that $\ell\ge\tilde t$, where $\tilde t\in \R$ is such that $\lambda_1(H_{\tilde t})=\lambda_1(\Omega)$.
If, by contradiction,
\[
\ell<\tilde t ,
\]
taking into account the fact that $\lambda_1(H_{\ell})>\lambda_1(H_{\tilde t})=\lambda_1(\Omega)$ it would follow
\[\lim_{\alpha\rightarrow \lambda_1(\Omega)^{-}} Q_p(\alpha,\Omega)=\lim_{\alpha\rightarrow \lambda_1(\Omega)^{-}} Q_p^\sharp(\alpha,t(\alpha))=Q_p^\sharp(\lambda_1(\Omega),\ell)<+\infty,
\]
in contrast with Proposition \ref{propa} (\ref{(ad)}). Then
\[\lim_{\alpha\rightarrow\lambda_1(\Omega)^{-}}t(\alpha)\ge\tilde t
\]
and the monotonicity of $t(\alpha)$ gives $t(\alpha)\ge \tilde t$. Finally, being the first eigenvalue decreasing with respect to the inclusion of sets, we get
$$\lambda_1(\Omega)=\lambda_1(H_{\tilde t})\ge \lambda_1(H_{t(\alpha)}).$$
\end{proof}

\section{An explicit solution when \texorpdfstring{\(p=2\)}{p=2}}\label{s5}

Let us consider the case $p=2$. We observe the solution $v$ to problem \eqref{veq.0} when $\Omega=H_t$ depends only on the first variable, so it reduces to solve the following one-dimensional problem
\begin{equation} \label{1deq}
\left\{
\begin{array}
[c]{lll}%
-v''+xv'=\alpha v+1& & \text{in }%
(-\infty,t),\\
\\
v\in H_0^1((-\infty,t),\phi_1). & & 
\end{array}
\right. %
\end{equation}
Here we want to show that, for example, when $\alpha<0$, an explicit solution to \eqref{1deq} can be given. So we consider the problem
\begin{equation}\label{1eq}
\left\{
\begin{array}
[c]{lll}
-w''+xw'+aw=1& & \text{in }
(-\infty,t),\\
\\
w\in H_0^1((-\infty,t),\phi_1) & & 
\end{array}
\right. 
\end{equation}
where $a>0$ and $t\in\R$.

We first consider the homogeneous equation
\begin{equation}\label{hom}
-w''+xw'+aw=0
\end{equation}
and we observe that two independent solution to \eqref{hom} are given by
\begin{equation} \label{homsol}
\begin{array}
[c]{lll}
w_1=M\left(\frac a2,\frac12,\frac{x^2}2\right)& &\\
\\
w_2=xM\left(\frac {a+1}2,\frac32,\frac{x^2}2\right)& &
\end{array}
\end{equation}
where $M(b,c,x)$ is known as Kummer function and solves the Kummer equation:
\begin{equation}\label{Kum}
xv''+(c-x)v'-bv=0.
\end{equation}
The function $M(b,c,x)$ is also denoted as $_1F_1(b,c,x)$ and is defined as a function of a complex variable $z$ in the following way (we refer to \cite{AS}):
\begin{equation*}
M(b,c,z)=\sum_{k=0}^\infty \frac{(c)_k}{(b)_k}\frac{z^k}{k!}
\end{equation*}
where, for $\zeta\in\C$, $(\zeta)_k$ denotes the Pochhammer symbol
\begin{equation*}
(\zeta)_k=\frac{\Gamma(\zeta+k)}{\Gamma(\zeta)}, \qquad k\ge0,\> k\in \Ze.
\end{equation*}
It is known that 
\begin{equation}\label{asympm}
M(b,c,z)\approx\frac{\Gamma(c)}{\Gamma(b)}e^zz^{b-c}, \qquad\text{as }z\rightarrow\infty,\qquad \text{Re}(z)>0,
\end{equation}
while, for $c\not\in\Ze$ the following combination
\begin{equation}\label{U}
U(b,c,z)=\frac{\pi}{\sin\pi c}\left(\frac{M(c-b,c,z)}{\Gamma(1+b-c)\Gamma(c)}-\frac{M(1-b,2-c,z)}{\Gamma(b)\Gamma(2-c)}z^{1-c}\right)
\end{equation}
satisfies
\begin{equation}\label{asympu}
U(b,c,z)\approx z^{-b}, \qquad\text{as } \text{Re}(z)\rightarrow+\infty.
\end{equation}
Combining the above information and denoting
\begin{equation}\label{Y}
Y(a,x)=\frac{M\left(\frac a2,\frac12,\frac{x^2}2\right)}{\Gamma(1+b-c)\Gamma(c)}+\frac{M\left(\frac {a+1}2,\frac32,\frac{x^2}2\right)}{\Gamma(b)\Gamma(2-c)}\frac x{\sqrt2}.
\end{equation}
it turns out that the function
\begin{equation}\label{solw}
w(x)=\frac1a-\frac1a\frac{Y(a,x)}{Y(a,t)}
\end{equation}
is the solution to problem \eqref{1eq}, then
\begin{equation}\label{solv}
w(x)=\frac1\alpha\frac{Y(-\alpha,x)}{Y(-\alpha,t)}-\frac1\alpha
\end{equation}
solves problem \eqref{1deq}.

\section{Appendix}\label{s6}

Our aim is to prove Proposition \ref{half}, so we firstly give the
following regularity result.

\begin{proposition}\label{summab}
Let $u>0$ be a solution to the problem 
\begin{equation} \label{3eq}
\left\{
\begin{array}
[c]{lll}
-\bigl(|u'|^{p-2}u'\phi_{1}(\sigma)\bigr)'=f(\sigma)\phi_{1}(\sigma)& & \sigma\in
(-\infty,t)
,\\
\\
u\in W^{1,p}_{0}\!
\left(
(-\infty,t),\phi_{1}\right), & & 
\end{array}
\right.
\end{equation}
where $f\ge0$ is such that
\begin{equation}\label{sumf}
\int_{-\infty}^{t} f(\sigma)^{\frac p{p-1}}|\sigma|^p\phi_{1}(\sigma)\,\di\sigma<+\infty.
\end{equation}
Then
\begin{equation}\label{sumder}
\int_{-\infty}^{t} |u'(\sigma)|^{p}|\sigma|^q\phi_{1}(\sigma)\,\di\sigma<+\infty,
\end{equation}
when $0\le q<2$.
\end{proposition}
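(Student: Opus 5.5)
The idea is to integrate the one-dimensional equation \eqref{3eq} once, which gives a pointwise expression for $|u'|^{p-1}\phi_1$. A Gaussian tail estimate then turns this into a pointwise bound on $|u'(\sigma)|^p$ as $\sigma\to-\infty$, and that bound is integrable against $|\sigma|^q\phi_1$ exactly when $q<2$ (uniformly in $p$). Near the endpoint $t$, and on any bounded interval $(a,t)$, the weight $|\sigma|^q$ is bounded. So \eqref{sumder} there follows directly from $u\in W^{1,p}_0((-\infty,t),\phi_1)$, and only the behaviour at $-\infty$ matters.

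\emph{Step 1: the flux vanishes at $-\infty$.} Set $g(\sigma)=|u'(\sigma)|^{p-2}u'(\sigma)\phi_1(\sigma)$. By \eqref{3eq}, $g'=-f\phi_1\le0$, so $g$ is nonincreasing and the limit $L=\lim_{\sigma\to-\infty}g(\sigma)$ exists in $(-\infty,+\infty]$.
\begin{itemize}
\item If $L\neq0$, then for $\sigma$ near $-\infty$ we have $|u'(\sigma)|^{p-1}\ge c/\phi_1(\sigma)$ for some $c>0$. Hence $|u'|^p\phi_1\ge c^{p'}\phi_1^{1-p'}=c^{p'}\phi_1^{-1/(p-1)}$, which is not integrable at $-\infty$. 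This contradicts $u\in W^{1,p}_0((-\infty,t),\phi_1)$.
\item Therefore $L=0$, and for every $s<t$
\[
|u'(s)|^{p-1}\phi_1(s)=\Bigl|\int_{-\infty}^s f(\sigma)\phi_1(\sigma)\,\di\sigma\Bigr|=\int_{-\infty}^s f\phi_1\,\di\sigma .
\]
\end{itemize}
This also shows that $fφ_1$ is integrable near $-\infty$, which Step 2 confirms independently.

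\emph{Step 2: Hölder plus a Gaussian tail bound.} Write $f\phi_1=\bigl(f|\sigma|^{p-1}\phi_1^{1/p'}\bigr)\bigl(|\sigma|^{-(p-1)}\phi_1^{1/p}\bigr)$ with $p'=p/(p-1)$. For $s\le -1$, Hölder's inequality gives
\[
\int_{-\infty}^s f\phi_1\le A(s)^{1/p'}\Bigl(\int_{-\infty}^s|\sigma|^{-p(p-1)}\phi_1\,\di\sigma\Bigr)^{1/p},
\qquad A(s)=\int_{-\infty}^s f^{p'}|\sigma|^p\phi_1\,\di\sigma ,
\]
where $A(s)$ is bounded (indeed $A(s)\to0$) by \eqref{sumf}. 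The standard Mills-ratio estimate gives $\int_{-\infty}^s|\sigma|^{-\beta}\phi_1\,\di\sigma\le C|s|^{-\beta-1}\phi_1(s)$ for $s\le-1$. Combining these,
\[
|u'(s)|^{p-1}\le C\,|s|^{-(p-1)-1/p}\,\phi_1(s)^{\frac1p-1}.
\]
Raising to the power $p'$ yields $|u'(s)|^p\le C|s|^{-p-\frac1{p-1}}\phi_1(s)^{-1}$.

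\emph{Step 3: conclusion.} By Step 2, $|u'(s)|^p|s|^q\phi_1(s)\le C|s|^{\,q-p-\frac1{p-1}}$ for $s\le-1$. This is integrable at $-\infty$ provided $q<p-1+\frac1{p-1}$. By the AM–GM inequality, $p-1+\frac1{p-1}\ge2$, so every $q\in[0,2)$ is admissible. Together with the bounded-interval part from the first paragraph, this proves \eqref{sumder}.

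I expect the main obstacle to be Step 1, namely justifying the integrated form of the equation (that the flux vanishes at $-\infty$) for a mere weak solution. The monotonicity argument above handles this, provided $g$ is understood as its absolutely continuous representative. The remaining steps are routine.
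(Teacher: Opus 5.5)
Your proposal is correct and follows essentially the paper's route. You integrate the equation once to get $|u'(s)|^{p-1}\phi_1(s)=\int_{-\infty}^s f\phi_1\,\di\sigma$, apply H\"older with the same splitting against $|\sigma|^{-p(p-1)}\phi_1$, use the Gaussian tail bound, and conclude from $p+\frac1{p-1}\ge 3$. Your Step 1, which shows via monotonicity of the flux and $u'\in L^p(\phi_1)$ that the flux vanishes at $-\infty$, is a valid justification of the integrated form, which the paper simply assumes.
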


\begin{proof}
Integrating equation \eqref{2eq} we obtain
\begin{equation*}
|u'(\sigma)|^{p-1}e^{-\frac{\sigma^{2}}2}=\int_{-\infty}^{\sigma}f(\tau)e^{-\frac{\tau^{2}}2}\,\di \tau,
\end{equation*}
then
\begin{equation}\label{sumdera}
\int_{-\infty}^{t} |u'(\sigma)|^{p}|\sigma|^qe^{-\frac{\sigma^{2}}2}\,\di\sigma=M+\int_{-\infty}^{\min\{-1,t\}}\left( \int_{-\infty}^{\sigma}f(\tau)e^{-\frac{\tau^{2}}2}\,\di \tau\right)^{\frac p{p-1}}|\sigma|^qe^{\frac{1}{p-1}\frac{\sigma^{2}}2}\,\di\sigma
\end{equation}
where $M\ge0$ is given by
\begin{equation*}
M=\int_{\min\{-1,t\}}^{t}\left( \int_{-\infty}^{\sigma}f(\tau)e^{-\frac{\tau^{2}}2}\,\di \tau\right)^{\frac p{p-1}}|\sigma|^qe^{\frac{1}{p-1}\frac{\sigma^{2}}2}\,\di\sigma<+\infty.
\end{equation*}
On the other hand, for $\sigma<0$, H\"older inequality gives
\begin{equation}\label{est}
\int_{-\infty}^{\sigma}f(\tau)e^{-\frac{\tau^{2}}2}\,\di \tau\le\left(\int_{-\infty}^{\sigma}f(\tau)^{\frac p{p-1}}|\tau|^pe^{-\frac{\tau^{2}}2}\,\di \tau\right)^{\frac{p-1}p}\left(\int_{-\infty}^{\sigma}\frac1{|\tau|^{p(p-1)}}e^{-\frac{\tau^{2}}2}\,\di \tau\right)^{\frac{1}p},
\end{equation}
then, taking into account the fact that
\begin{equation*}
\int_{-\infty}^{\sigma}\frac1{|\tau|^{p(p-1)}}e^{-\frac{\tau^{2}}2}\,\di \tau\approx\frac{e^{-\frac{\sigma^{2}}2}}{|\sigma|^{1+p(p-1)}},\qquad\text{ as }\sigma\rightarrow-\infty,
\end{equation*}
we have
\begin{equation*}
\left( \int_{-\infty}^{\sigma}f(\tau)e^{-\frac{\tau^{2}}2}\,\di \tau\right)^{\frac p{p-1}}|\sigma|^qe^{\frac{1}{p-1}\frac{\sigma^{2}}2}\approx\frac{|\sigma|^{q}}{|\sigma|^{\frac1{p-1}+p}},\qquad\text{ as }\sigma\rightarrow-\infty.
\end{equation*}
Observing that $\frac1{p-1}+p\ge3$ for $p>1$, we have that for $0\le q<2$ the integral on the right-hand side of \eqref{sumdera} is finite.
\end{proof}
\begin{proof}[Proof of Proposition \ref{half}]
We have already observed that there exists a positive function $u_1=u_1(x_1)$, depending only on $x_1$, which is an eigenfunction corresponding to $\lambda_1(H_t)$. The simplicity of 
 $\lambda_1(H_t)$ can be proven following an argument in \cite{Lin} which we have already adapted in the gaussian context in the proofs of Proposition \ref{prop} (\ref{(b)}) and Proposition \ref{propa} (\ref{(aa)}).
 
 We have also observed that the mapping $t\mapsto \lambda_1(H_t)$ is increasing, so we have only to prove \eqref{shape}. Let us denote by $u$ the positive eigenfunction such that $||u||_{L^p(H_t,\phi_N)}=||u||_{L^p((-\infty,t),\phi_1)}=1$ and let us consider, for $h>0$, the test function
 $$w_1(x)=u(x-h)e^{\frac{hx}p}.$$
 Clearly we have $w_1\in L^p((-\infty,t+h),\phi_1)$ because
 $$\int_{-\infty}^{t+h}w_1(x)^{p}\phi_1(x)\,\di x=\int_{-\infty}^{t+h}\left(u(x-h)^{p}e^{\frac{hx}p}\right)^p\phi_1(x)\,\di x
 =
 e^{\frac{h^{2}}{2}}
 \int_{-\infty}^{t}u(x)^{p}\phi_1(x)\,\di x.
 $$
Taking into account the fact the $w_1'(x)=(u'(x-h)+h\,u(x-h)/p)e^{\frac{hx}p}$ we also have $w_1\in W_0^{1,p}((-\infty,t+h),\phi_1)$, so we can use $w_1$ in the variational characterization 
\eqref{1dim} for $\lambda_1(H_{t+h})$, obtaining
\begin{equation*}
\lambda_1(H_{t+h})
\le
\frac{\dint_{-\infty}^{t+h} |w_1'(x)|^{p}\,\phi_{1}(x)\,\di x}
{\dint_{-\infty}^{t+h} |w_1(x)|^{p}\,\phi_{1}(x)\,\di x}=
\frac{\dint_{-\infty}^{t} \left|u'(x)+u(x)\frac hp\right|^{p}\,\phi_{1}(x)\,\di x}
{\dint_{-\infty}^{t} |u(x)|^{p}\,\phi_{1}(x)\,\di x}.
\end{equation*}
Forming the difference quotient for
$\lambda_1(H_{t})$, we obtain 
\begin{equation*}
\frac{\lambda_1(H_{t+h})-\lambda_1(H_{t})}h
\le
\frac1h\dint_{-\infty}^{t} \left(\left|u'(x)+u(x)\frac hp\right|^{p}-|u'(x)|^{p}\right)\,\phi_{1}(x)\,\di x
\end{equation*}
and, passing to the limit as $h\rightarrow0^+$, we have
$$
\limsup_{h\rightarrow0^+}\frac{\lambda_1(H_{t+h})-\lambda_1(H_{t})}h
\le
\dint_{-\infty}^{t} |u'(x)|^{p-2}u'(x)u(x)\,\phi_{1}(x)\,\di x.
$$

In order to determine the limit as $h \to 0^+$, it is necessary to show that the opposite inequality holds for the  $\liminf\limits_{h \to 0^+}$.

We denote by $u_{h}$ the positive eigenfunction corresponding to $\lambda_1(H_{t+h})$, normalized such that
    \begin{equation}
    \label{norm}
    \|u_{h}\|_{L^{p}((-\infty, t+h), \phi_1)} = 1 \quad \forall h > 0.
\end{equation}
Then we define the ``backwards'' test function
\begin{equation*}
w_2(x)=u_{h}(x+h)e^{-\frac{hx}{p}},
\end{equation*}
which belongs to 
$W_{0}^{1,p}\left( (-\infty ,t),\phi _{1}\right) $ because 
\begin{equation*}
\left\Vert w_2\right\Vert _{L^{p}\left( (-\infty ,t),\phi _{1}\right)
}^{p}=\int_{-\infty }^{t}u_{h}^{p}(x+h)e^{-hx}e^{-\frac{x^{2}}{2}
}\di x
= e^{\frac{h^{2}}{2}}.
\end{equation*}
Similarly
\begin{align*}
\left\Vert w_2^{\prime }\right\Vert _{L^{p}\left( (-\infty ,t),\phi
_{1}\right) }^{p} &=\int_{-\infty }^{t}\left\vert u_{h}^{\prime
}(x+h)-u_{h}(x+h)\frac{h}{p}\right\vert ^{p}e^{-hx}e^{-\frac{x^{2}}{2}}\di x
\\
&=e^{\frac{h^{2}}{2}}\int_{-\infty }^{t+h}\left\vert u_{h}^{\prime
}(\sigma )-u_{h}(\sigma )\frac{h}{p}\right\vert ^{p}\phi _{1}(\sigma
)\di\sigma <+\infty .
\end{align*}
Using $w_2$ in the variational characterization of $\lambda_1(H_t)$, we have
\begin{equation*}
\lambda _{1}(H_{t})\leq \int_{-\infty }^{h}\left\vert u_{h}^{\prime
}(x)-u_{h}(x)\frac{h}{p}\right\vert ^{p}\phi _{1}(x)\di x.
\end{equation*}
Therefore it holds that
\begin{equation}
\frac{\lambda _{1}(H_{t+h})-\lambda _{1}(H_{t})}{h}\geq \int_{-\infty }^{t+h}
\frac{\left\vert u_{h}^{\prime }(x)\right\vert ^{p}-\left\vert
u_{h}^{\prime }(x)-u_{h}(x)\frac{h}{p}\right\vert ^{p}}{h}\phi _{1}(x)\di x
\label{_liminf}
\end{equation}
and hence
\begin{align}
\label{liminf_}
\liminf_{h \to 0^{+}} \frac{\lambda_1(H_{t+h}) - \lambda_1(H_t)}{h}  &
\geq \liminf_{h \to 0^{+}} \int_{-\infty}^{t+h} \frac{|u_{h}'(x)|^p - \left| u_{h}'(x) - u_{h}(x) \frac{h}{p} \right|^p}{h} \phi_1(x) \, \di x \\
&= \lim_{n \to +\infty} \int_{-\infty}^{t+h_{n}} \frac{|u_{h_{n}}'(x)|^p - \left| u_{h_{n}}'(x) - u_{h_{n}}(x) \frac{h_{n}}{p} \right|^p}{h_{n}} \phi_1(x) \, \di x  ,\nonumber
\end{align}
where $\{h_{n}\}_{n \in \mathbb{N}} \subset (0, 1)$ is a suitable sequence converging to zero.

We intend to apply the Lebesgue Dominated Convergence Theorem. To this end, we first seek a function  independent of $h$ and with the suitable integrability, which uniformly bounds the integrand appearing in right-hand side of inequality \eqref{liminf_}.

The Lagrange Theorem ensures that for any pair of real numbers $a$ and $b $, there exists $\eta$ between $|a|$ and $|b|$ such that
\begin{equation}\label{lagr}
    \left| |a|^{p} - |b|^{p} \right| 
    = p \eta^{p-1} \left| |a| - |b| \right| 
    \leq p (|a|^{p-1} + |b|^{p-1}) |a-b|.
\end{equation}
Applying the previous inequality with $a = u_{h}'(x)$ and $b = u_{h}'(x) - u_{h}(x) \dfrac{h}{p}$, we obtain

 \begin{align}
\label{Lagrange}
 \frac{1}{h} 
 \left| 
  \left| u_{h}'(x) \right|^{p} -
   \left| u_{h}'(x) - u_{h}(x)\frac{h}{p} \right|^{p} 
 \right| &\leq
 \left( |u_{h}'(x)|^{p-1} + \left| u_{h}'(x) - u_{h}(x)\frac{h}{p} \right|^{p-1} \right)
 u_{h}(x)   \\
 &\leq C 
 \left( |u_{h}'(x)|^{p-1} u_{h}(x) + \frac{h}{p} u_{h}(x)^{p} \right), \nonumber
\end{align}
where in the last inequality, which can be derived through elementary arguments, we have
\begin{equation*}
    C = \max\{1, 2^{p-2}\}.
\end{equation*}
In what follows, $C$ will denote a positive constant whose value may change from line to line, independent of  $h$ and $x$.

Note that $\lambda_1(H_{t})$ is bounded in a right neighborhood of $t$ due to its monotonicity. Incidentally, as can be straightforwardly shown, $\lambda_1(H_{t})$ is a continuous function too. 
Integrating equation (\ref{1dim}) and then using  H\"older inequality we get
        \begin{align}
    \label{A}
    |u_{h}'(x)|^{p-1} \phi_{1}(x) &=
    \lambda_{1}(H_{t+h}) \int_{-\infty}^{x} u_{h}(\tau)^{p-1} \phi_{1}(\tau) \, \di\tau \\
    &\leq
    \lambda_{1}(H_{t+h}) \left( \int_{-\infty}^{x} u_{h}(\tau)^{p} \phi_{1}(\tau) \, \di\tau \right)^{\frac{p-1}{p}} \left( \int_{-\infty}^{x} \phi_{1}(\tau) \, \di\tau \right)^{\frac{1}{p}} \nonumber \\
    &\leq 
    C \left( \int_{-\infty}^{x} \phi_{1}(\tau) \, \di\tau \right)^{\frac{1}{p}} \leq C \frac{e^{-\frac{x^{2}}{2p}}}{1+|x|^{\frac{1}{p}}}. \nonumber
    \end{align}
Now let us turn our attention to the term $\dfrac{h}{p} u_{h}(x)^p$. From the integral representation of the derivative, we have
    \begin{equation*}
    |u_{h}'(x)| \leq \left( \frac{\lambda_1(H_{t+h})}{\phi_1(x)} \int_{-\infty}^{x} u_{h}(\tau)^{p-1} \phi_1(\tau) \, \di\tau \right)^{\frac{1}{p-1}}.
    \end{equation*}
  It follows that
    \begin{equation*}
    |u_{h}(x)| 
    \leq
    \int_{-\infty}^{x} |u_{h}'(\rho)| \, \di\rho \leq \int_{-\infty}^{x} \left( \frac{\lambda_1(H_{t+h})}{\phi_1(\rho)} \int_{-\infty}^{\rho} u_{h}(\tau)^{p-1} \phi_1(\tau) \, \di\tau \right)^{\frac{1}{p-1}} \, \di\rho.
    \end{equation*}
 By applying H\"older  inequality to the inner integral, taking into account 
 (\ref{norm}), we obtain
    \begin{equation*}
    \left( 
    \frac{1}{\phi_1(\rho)} \int_{-\infty}^{\rho} u_{h}(\tau)^{p-1} \phi_1(\tau) \, \di\tau \right)^{\frac{1}{p-1}} 
    \leq C \left( e^{\frac{\rho^2}{2}} \left( \int_{-\infty}^{\rho} e^{-\frac{\tau^2}{2}} \, \di\tau \right)^{\frac{1}{p}} \right)^{\frac{1}{p-1}}.
    \end{equation*}
    Using the asymptotic behavior of the Gaussian tail, we derive
    \begin{equation*}
    \left( \frac{1}{\phi_1(\rho)} \int_{-\infty}^{\rho} u_{h}(\tau)^{p-1} \phi_1(\tau) \, \di\tau \right)^{\frac{1}{p-1}}
    \leq C \frac{e^{\frac{\rho^2}{2p}}}{(1+|\rho|)^{\frac{1}{p(p-1)}}}.
    \end{equation*}
    From this bound, we infer the following
    \begin{equation*}
    u_{h}(x) \leq C \int_{-\infty}^{x} \frac{e^{\frac{\rho^2}{2p}}}{(1+|\rho|)^{\frac{1}{p(p-1)}}} \, \di\rho \leq C \frac{e^{\frac{x^2}{2p}}}{(1+|x|)^{1 + \frac{1}{p(p-1)}}}.
    \end{equation*}
By raising this to the power $p$ and multiplying by the Gaussian weight $\phi_1(x)$, we conclude that
    \begin{equation}
    \label{B}
    \frac{h}{p}
    u_{h}(x)^p \phi_1(x) 
    \leq \frac{C}{(1+|x|)^{p + \frac{1}{p-1}}}.
    \end{equation}
   Having established the uniform bounds, we proceed to prove the pointwise convergence almost everywhere. 
   
Let $\{{h_n} \}_{n \in \mathbb{N}}$ be the sequence appearing in \eqref{liminf_}. Since $\lambda_1(H_t)$ is a continuous function we have that the sequence 
 $\{u_{h_{n}}\}_{n \in \mathbb{N}}$ is bounded in $W_0^{1,p}((-\infty, t+1), \phi_1)$.
 Consequently, we can find a subsequence 
    $\{h_{n_k}\}_{k \in \mathbb{N}} $    
 and a nonnegative function $\tilde{u} \in W_0^{1,p}((-\infty, t+1), \phi_1)$ such that 

\begin{equation*}
\begin{cases} 
u_{h_{n_{k}}} \to \tilde{u} & \text{weakly in } W_0^{1,p}((-\infty, t+1), \phi_1) \\
\\
u_{h_{n_{k}}} \to \tilde{u} & \text{strongly in } L^{p}((-\infty, t+1), \phi_1) \\
\\
u_{h_{n_{k}}}^{p}(x) \to \tilde{u}^{p}(x) & \text{for a.e. } x \in (-\infty, t+1). 
\end{cases}
\end{equation*}
Note that 
$\tilde{u} \in W_0^{1,p}((-\infty, t),\phi_1)$,
since $\tilde{u}(x)=0$ for any $t\le x \le t+1$.  
Moreover, the Vitali Convergence Theorem ensures that the normalization is preserved in the limit, namely
\begin{equation}
    \label{u_tilde}
  \int_{-\infty}^{t} \tilde{u}(x)^{p} \phi_{1}(x) \, dx
   = 1.
\end{equation}
By the sequential weak lower semicontinuity of the $L^p$ norm we obtain
\begin{equation}
\label{L_lambda}
    \int_{-\infty}^{t} |\tilde{u}'(x)|^{p} \phi_{1}(x) \, dx \leq \liminf_{k\to\infty} \int_{-\infty}^{t+h_{n_k}} |u_{h_{n_k}}'(x)|^{p} \phi_{1}(x) \, dx = \lim_{k\to\infty} \lambda_{1}(H_{t+h_{n_k}}) = \lambda_{1}(H_{t}).
\end{equation}
From \eqref{L_lambda} and \eqref{u_tilde}, it follows that
\begin{equation*}
    \frac{\displaystyle\int_{-\infty}^{t} |\tilde{u}'(x)|^{p} \phi_1(x) \,dx}{\displaystyle\int_{-\infty}^{t} 
    \tilde{u}^{p}(x) \phi_1(x) \,dx} 
    \leq \lambda_1(H_t).
\end{equation*}

By the variational characterization of the first eigenvalue, this Rayleigh quotient must be equal to $\lambda_1(H_t)$, forcing the inequality in \eqref{L_lambda} to be an equality. Since $L^p((-\infty, t+1),\phi_1)$ is uniformly convex, the weak convergence of the sequence
$\{u_{h_{n_{k}}}\}_{k \in \mathbb{N}}$ together with the convergence of the norms implies that $u'_{h_{n_k}} \to \tilde{u}'$ strongly in $L^p((-\infty, t+1),\phi_1)$. Thus, up to a further (not relabeled) subsequence, it holds that
$$
u'_{h_{n_k}}(x) \to \tilde{u}'(x) 
\quad
\text{for a.e.  } x \in (-\infty, t+1).
$$ 
Finally, since the first eigenvalue is simple, the limit function $\tilde{u}$ coincides with the positive eigenfunction, satisfying \eqref{u_tilde}, that we previously denoted by $u$.
\begin{remark}
         The same conclusions could have been reached, in a shorter and more direct way, by arguing directly from the equation \eqref{A}. However, we preferred here to follow a variational approach to clearly separate the properties that depend intrinsically on the specific structure of the equation from those that can be deduced in a more general setting.
   \end{remark}
    The pointwise convergence almost everywhere just established, combined with the integrable dominating functions previously constructed (inequalities \eqref{A} and \eqref{B}), allows us to apply the Dominated Convergence Theorem along the subsequence, finally obtaining
\begin{align*}
\liminf_{h \to 0^{+}} \frac{\lambda_1(H_{t+h}) - \lambda_1(H_t)}{h} &\geq \liminf_{h \to 0^{+}} \int_{-\infty}^{t+h} \frac{|u_{h}'(x)|^p - \left| u_{h}'(x) - u_{h}(x) \frac{h}{p} \right|^p}{h} \phi_1(x) \, \di x \\
&= \lim_{k \to +\infty} \int_{-\infty}^{t+h_{n_k}} \frac{|u_{h_{n_k}}'(x)|^p - \left| u_{h_{n_k}}'(x) - u_{h_{n_k}}(x) \frac{h_{n_k}}{p} \right|^p}{h_{n_k}} \phi_1(x) \, \di x \\
&= \int_{-\infty}^{t} |u'(x)|^{p-2} u'(x) u(x) \phi_1(x) \, \di x .
\end{align*}
The same arguments for $h<0$ provide the full derivative
\begin{equation}\label{shape0}
\frac\di{\di t}\lambda_1(H_{t})=\dint_{-\infty}^{t} |u'(x)|^{p-2}u'(x)u(x)\,\phi_{1}(x)\,\di x.
\end{equation}
It remains to prove that the right-hand side in \eqref{shape0} is equal to the right-hand side in \eqref{shape}.
Indeed, using the equation in \eqref{2eq}, a direct calculation gives
\begin{eqnarray*}
\dint_{-\infty}^{t} |u'(x)|^{p-2}u'(x)u(x)\,\phi_{1}(x)\,\di x=-
\dint_{-\infty}^{t}u'(x) \left(\dint_{-\infty}^{x}|u'(\sigma)|^{p-2}u'(\sigma)\,\phi_{1}(\sigma)\,\di\sigma\right)\di x=\\
=
\dint_{-\infty}^{t}u'(x) \left[-x |u'(x)|^{p-2}u'(x)\phi_{1}(x)+\dint_{-\infty}^{x}\sigma\left(|u'(\sigma)|^{p-2}u'(\sigma)\,\phi_{1}(\sigma)\right)'\,\di\sigma\right]\di x=\\
=
-\dint_{-\infty}^{t}|u'(x)|^{p}x\,\phi_{1}(x)\,\di x+\lambda_1(H_{t})\dint_{-\infty}^{t}|u(x)|^{p}x\,\phi_{1}(x)\,\di x,
\end{eqnarray*}
where the finiteness of the last two integrals is a consequence of Proposition \ref{summab} and the Hardy inequality proved in \cite{BCT}. Finally, we observe that, if $u$ solves \eqref{2eq} then
\begin{equation*}
|u'(x)|^{p}x\,\phi_{1}(x)-\lambda_1(H_{t})|u(x)|^{p}x\,\phi_{1}(x)=
(p-1)\left(|u'(x)|^{p}\,\phi_{1}(x)\right)'+\lambda_1(H_{t})\left(|u(x)|^{p}\,\phi_{1}(x)\right)'
\end{equation*}
and the claim follows.

\end{proof}

\begin{proof}[Proof of Proposition \ref{derQ}]
By Proposition \ref{prop}, (c), it is clear that $Q_p^\sharp(\alpha,t)$ is an increasing function with respect to $t$, so we only need to prove \eqref{shapeQ}. Let us consider, for $0<h<1$, the test function
$$w_1(x)=v_h(x+h)e^{-\frac{2hx+h^2}{2p}},$$
where $v_h$ is the solution to \eqref{2eqQ} with $t$ replaced by $t+h$.
Arguing as in the proof of Proposition \ref{half}, we can use $w_1$ in the variational characterization 
\eqref{1dimQ} for $Q_p^\sharp(\alpha,t)$, obtaining
\begin{align*}
Q_p^\sharp(\alpha,t)&
\ge-\int_{-\infty}^{t}|w_1'(x)|^p\,\phi_{1}(x)\,\di x+\alpha\int_{-\infty}^{t} |w_1(x)|^p\,\phi_{1}(x)\,\di x+p\int_{-\infty}^{t} w_1(x)\,\phi_{1}(x)\,\di x=\\
&=-\int_{-\infty}^{t+h}\left|v_h'(x)-v_h(x)\frac hp\right|^p\,\phi_{1}(x)\,\di x+\alpha\int_{-\infty}^{t+h} |v_h(x)|^p\,\phi_{1}(x)\,\di x\\
&+p\int_{-\infty}^{t+h} v_h(x)e^{\frac{2xh-h^2}{2p}(p-1)}\,\phi_{1}(x)\,\di x
\end{align*}
and
\begin{align}
\label{incr2}
\frac{Q_p^\sharp(\alpha,t+h)-Q_p^\sharp(\alpha,t)}h
&\le
-\frac1h{\dint_{-\infty}^{t+h} \left(|v_h'(x)|^{p}-\left|v_h'(x)-v_h(x)\frac hp\right|^{p}\right)\,\phi_{1}(x)\,\di x}+
   \\
&+\frac phe^{-\frac{h^2}{2p}(p-1)}{\dint_{-\infty}^{t+h} v_h(x)\left(1-e^{\frac{xh}{p}(p-1)}\right)\,\phi_{1}(x)\,\di x}+  \notag
   \\
&+\frac ph\left(1-e^{-\frac{h^2}{2p}(p-1)}\right){\dint_{-\infty}^{t+h}v_h(x)
\,\phi_{1}(x)\,\di x}.  \notag
\end{align}
In order to pass to the limit on the right-hand side of \eqref{incr2}, we observe that, being $v_h$ the solution to problem \eqref{2eqQ} in $(-\infty,t+h)$, it holds
\begin{align*}\dint_{-\infty}^{t+h} |v_h'(x)|^{p}\,\phi_{1}(x)\,\di x=&
\alpha\dint_{-\infty}^{t+h} |v_h(x)|^{p}\,\phi_{1}(x)\,\di x+
\dint_{-\infty}^{t+h} v_h(x)\,\phi_{1}(x)\,\di x\le\\
\le&\left(\frac{\alpha+\varepsilon}{\lambda_1(H_{t+h})}\right)\dint_{-\infty}^{t+h} |v'_h(x)|^{p}\,\phi_{1}(x)\,\di x+C(\varepsilon)
\end{align*}
where $\varepsilon>0$ is sufficiently small and $C(\varepsilon)$ is a constant which depends on $\varepsilon$ only. This means that $\|v_h\|_{W_0^{1,p}(H_{t+h})}$ is bounded, then, recalling that $v_h$ satisfies
\begin{equation}\label{C}
-|v_h'(s)|^{p-2}v_h'(s)\phi_{1}(s)=\int_{-\infty}^{s}\left(\alpha v_h^{p-1}+1\right)\phi_1(x)\,\di x,\qquad s\in
(-\infty,t+h),
\end{equation}
it is possible to repeat the arguments used in the proof of \eqref{tlim} in Proposition \ref{finite} to show that 
\begin{equation*}
\begin{cases} 
v_h \to v & \text{weakly in } W_0^{1,p}((-\infty, t+1), \phi_1) \\
\\
v_h \to v & \text{strongly in } L^{p}((-\infty, t+1), \phi_1) \\
\\
v_h(x) \to v(x) & \text{for a.e. } x \in (-\infty, t+1). 
\end{cases}
\end{equation*}
where $v$ is the solution to problem \eqref{2eqQ} in $(-\infty,t)$.

For the first integral on the right-hand side of \eqref{incr2} we observe that, taking into account \eqref{lagr}, it holds
 \begin{align*}
 \frac{1}{h} 
 \left| 
  \left| v_{h}'(x) \right|^{p} -
   \left| v_{h}'(x) - v_{h}(x)\frac{h}{p} \right|^{p} 
 \right| &\leq
 C' \left( |v_{h}'(x)|^{p-1} + \left| v_{h}'(x) - v_{h}(x)\frac{h}{p} \right|^{p-1} \right)
 v_{h}(x)   \\
 &\leq C 
 \left( |v_{h}'(x)|^{p-1} v_{h}(x) + \frac{h}{p} v_{h}(x)^{p} \right), 
\end{align*}
Using equation \eqref{C} and then using  H\"older inequality we get
        \begin{align*}
    |v_{h}'(x)|^{p-1} \phi_{1}(x) &\le|\alpha|
    \left( \int_{-\infty}^{x} v_{h}(\tau)^{p} \phi_{1}(\tau) \, \di\tau \right)^{\frac{p-1}{p}} \left(\int_{-\infty}^{x}\phi_{1}(\tau) \, \di\tau\right)^{\frac1p}+\int_{-\infty}^{x}\phi_{1}(\tau) \, \di\tau \\
    &\leq 
    C \left( \int_{-\infty}^{x} \phi_{1}(\tau) \, \di\tau \right)^{\frac{1}{p}} +\int_{-\infty}^{x}\phi_{1}(\tau) \, \di\tau,
    \end{align*}
and, as shown in the previous proof we have that $v'_h(x)^p\phi_{1}(x)$ is majorized by a summable function. Similarly, using again \eqref{C}, we have that also $v_h(x)^p\phi_{1}(x)$ is majorized by a summable function, then we get
$$\lim_{h\rightarrow0^+}\frac1h{\dint_{-\infty}^{t+h} \left(|v_h'(x)|^{p}-\left|v_h'(x)-v_h(x)\frac hp\right|^{p}\right)\,\phi_{1}(x)\,\di x}=
\dint_{-\infty}^{t} |v'(x)|^{p-2}v'(x)v(x)\,\phi_{1}(x)\,\di x.
$$
Analogously,
\begin{align*}
&\lim_{h\rightarrow0^+} \frac phe^{-\frac{h^2}{2p}(p-1)}{\dint_{-\infty}^{t+h} v_h(x)\left(1-e^{\frac{xh}{p}(p-1)}\right)\,\phi_{1}(x)\,\di x}=-(p-1)\dint_{-\infty}^{t}x\, v(x)\,\phi_{1}(x)\,\di x,
 \\
&\lim_{h\rightarrow0^+}\frac ph\left(1-e^{-\frac{h^2}{2p}(p-1)}\right){\dint_{-\infty}^{t+h}v_h(x)
\,\phi_{1}(x)\,\di x}=0, 
\end{align*}
where the finiteness of the integral on the right-hand side of the first equality is a consequence of the Hardy inequality proved in \cite{BCT}.
Collecting the above results, we have
\begin{align*}
\limsup_{h\rightarrow0^+}\frac{Q_p^\sharp(\alpha,t+h)-Q_p^\sharp(\alpha,t)}h
\le&-
\dint_{-\infty}^{t} |v'(x)|^{p-2}v'(x)v(x)\,\phi_{1}(x)\,\di x-\\
-&(p-1)\dint_{-\infty}^{t}x\, v(x)\,\phi_{1}(x)\,\di x.
\end{align*}


In order to get the opposite inequality, we consider the test function
$$w_2(x)=v(x-h)e^{\frac{2hx-h^2}{2p}},$$
where $v$ is the solution to \eqref{2eqQ}. 
We can use $w_2$ in the variational characterization 
\eqref{1dimQ} for $Q_p^\sharp(\alpha,t+h)$, obtaining
\begin{align*}
Q_p^\sharp(\alpha,t+h)
&\ge
-\int_{-\infty}^{t+h}|w_2'(x)|^p\,\phi_{1}(x)\,\di x+\alpha\int_{-\infty}^{t+h} |w_2(x)|^p\,\phi_{1}(x)\,\di x+p\int_{-\infty}^{t+h} w_2(x)\,\phi_{1}(x)\,\di x\\
&=-\int_{-\infty}^{t}\left|v'(x)+v(x)\frac hp\right|^p\,\phi_{1}(x)\,\di x+\alpha\int_{-\infty}^{t} |v(x)|^p\,\phi_{1}(x)\,\di x+\\
&+p\int_{-\infty}^{t} v(x)\,\phi_{1}(x)e^{-\frac{2xh+h^2}{2p}(p-1)}\,\di x.
\end{align*}
Then
\begin{align}
\label{incr1}
\frac{Q_p^\sharp(\alpha,t+h)-Q_p^\sharp(\alpha,t)}h
&\ge
-\frac1h{\dint_{-\infty}^{t} \left(\left|v'(x)+v(x)\frac hp\right|^{p}-|v'(x)|^{p}\right)\,\phi_{1}(x)\,\di x}+
   \\
&+\frac phe^{-\frac{h^2}{2p}(p-1)}{\dint_{-\infty}^{t} v(x)\left(e^{-\frac{xh}{p}(p-1)}-1\right)\,\phi_{1}(x)\,\di x}.  \notag\\
&+\frac ph\left(e^{-\frac{h^2}{2p}(p-1)}-1\right)\dint_{-\infty}^{t} v(x)
\,\phi_{1}(x)
\,\di x.  \notag
\end{align}


Arguing as above, it is possible to pass to the limit as $h$ goes to 0 in  \eqref{incr1}, obtaining
\begin{align}
\label{incr11}
\liminf_{h\rightarrow0^+}\frac{Q_p^\sharp(\alpha,t+h)-Q_p^\sharp(\alpha,t)}h
&\ge
-
\dint_{-\infty}^{t} |v'(x)|^{p-2}v'(x)v(x)\,\phi_{1}(x)\,\di x-\\
-&(p-1)\dint_{-\infty}^{t}x\, v(x)\,\phi_{1}(x)\,\di x.
 \notag
\end{align}
Repeating the same argument when $h<0$ we get 
\begin{equation}\label{shapQ}
\frac\partial{\partial t}Q_p^\sharp(\alpha,t)=-\dint_{-\infty}^{t} |v'(x)|^{p-2}v'(x)v(x)\,\phi_{1}(x)\,\di x-(p-1)\dint_{-\infty}^{t}x\, v(x)\,\phi_{1}(x)\,\di x.
\end{equation}
It remains to prove that the right-hand side in \eqref{shapQ} is equal to the right-hand side in \eqref{shapeQ}.
Indeed, using the equation in \eqref{2eqQ}, a direct calculation gives
\begin{eqnarray*}
-\dint_{-\infty}^{t} |v'(x)|^{p-2}v'(x)v(x)\,\phi_{1}(x)\,\di x=
\dint_{-\infty}^{t}v'(x) \left(\dint_{-\infty}^{x}|v'(\sigma)|^{p-2}v'(\sigma)\,\phi_{1}(\sigma)\,\di\sigma\right)\di x=\\
=
\dint_{-\infty}^{t}v'(x) \left[x |v'(x)|^{p-2}v'(x)\phi_{1}(x)-\dint_{-\infty}^{x}\sigma\left(|v'(\sigma)|^{p-2}v'(\sigma)\,\phi_{1}(\sigma)\right)'\,\di\sigma\right]\di x=\\
=
\dint_{-\infty}^{t}|v'(x)|^{p}x\,\phi_{1}(x)\,\di x-\dint_{-\infty}^{t}(\alpha|v(x)|^{p}+v(x))x\,\phi_{1}(x)\,\di x,
\end{eqnarray*}
where the finiteness of the last two integrals is a consequence of Proposition \ref{summab} and of the Hardy inequality proved in \cite{BCT}. Thus,
\begin{equation*}
\frac\partial{\partial t}Q_p^\sharp(\alpha,t)=\dint_{-\infty}^{t}|v'(x)|^{p}x\,\phi_{1}(x)\,\di x-\dint_{-\infty}^{t}(\alpha|v(x)|^{p}+p\,v(x))x\,\phi_{1}(x)\,\di x.
\end{equation*}
Finally, we observe that, if $v$ solves \eqref{2eqQ}, then
\begin{equation*}
|v'(x)|^{p}x\,\phi_{1}(x)-(\alpha|v(x)|^{p}
+p\,v(x))x\,\phi_{1}(x)=
(p-1)\left(|v'(x)|^{p}\,\phi_{1}(x)\right)'+\left((\alpha|v(x)|^{p}+pv(x))\phi_{1}(x)\right)'
\end{equation*}
and the claim follows. 


\end{proof}

\section*{Acknowledgement}

\noindent The research of the  second and third author was partially supported by Italian MIUR  through research projects   PRIN 2022: PRIN20229M52AS Partial differential equations and related geometric-functional inequalities and PRIN PNRR 2022 - P2022YFAJH - Linear and Nonlinear PDE's: New directions and Applications. The research of the  first author was partially supported by Italian MIUR  through research projects  PRIN PNRR 2022 - P2022YFAJH - Linear and Nonlinear PDE's: New directions and Applications.  The first, second and third authors are members of the Gruppo Nazionale per l'Analisi Matematica, la Probabilit\`a
e le loro Applicazioni (GNAMPA) of the Istituto Nazionale di Alta Matematica (INdAM).
The research of the last author was supported in part by NSF Grant DMS-2246817.

\end{document}